\newcommand{\ov}{\overline}
\newcommand{\p}{\partial}
\newcommand{\w}{\widetilde}
\newtheorem{thm}{Theorem}[section]
\newtheorem{cor}[thm]{Corollary}
\newtheorem{lem}[thm]{Lemma}
\newtheorem{prop}[thm]{Proposition}
\newtheorem{example}[thm]{Example}
\newtheorem{conj}[thm]{Conjecture}
\numberwithin{equation}{section}
\begin{document}
\title{\bf Regular multi-types   and the Bloom conjecture}
\author{Xiaojun Huang\footnote{
Supported in part by NSF-1665412} \ \  and\  Wanke Yin\footnote{Supported in part by NSFC-11722110 and NSFC-11571260 }} \maketitle

\abstract{ We prove   equality of the vector field (iterated
commutator) type and the regular contact type, which together with
the Bloom theorem on equality of the Levi-form type and the regular
contact type provides a complete solution of a long standing open
problem of Bloom (\cite{Bl2}) in the case of complex dimension
three. For general dimensions, we verify the Bloom conjecture when
$s=n-2$, which provides the first positive result in the
pseudoconvexity sensitive case for a real hypersurface in ${\mathbb
C}^n$ after his important work   in 1981 (\cite{Bl2}).}

\section{Introduction}
Let $D$ be a smoothly bounded pseudoconvex domain in  ${\mathbb
C}^n$ for $n\ge 2$. Many analytic and geometric properties of  $D$
are  determined by its boundary holomorphic invariants. To
generalize his subelliptic estimate for the $\ov\partial$-Neumann
problem from bounded strongly pseudoconvex domains \cite{FK} to
bounded weakly pseudoconvex domains in ${\mathbb C}^2$, Kohn in a
fundamental paper \cite{Kohn1} investigated three different boundary
invariants for $D\subset {\mathbb C}^2$.  These invariants describe,
respectively,  the maximum order of contact with smooth holomorphic
curves at  a boundary point, degeneracy of the Levi-form along the
CR directions and the length  of the iterated Lie brackets of
boundary CR vector fields as well as their conjugates  needed to
recover the boundary contact direction. Kohn proved that all these
invariants are in fact the same, called the type value of  a point
on $\partial D\subset {\mathbb C}^2$. When this type value is finite
at each point, Kohn's work in \cite{Kohn1} together with that of
Greiner \cite{Gr} (see also Rothschild-Stein \cite{RS}) gives the
precise information of how much the subelliptic gain one obtains for
the $\ov\partial$-Neumann problem for a smoothly bounded weakly
pseudoconvex domain in ${\mathbb C}^2$. The finite type condition
initiated by  the work of Kohn has played fundamental roles in late
studies of many problems. For instance, Bedford-Fornaess \cite{BF}
(see also the later work of Fornaess-Sibony \cite{FS}) exploited
peak functions over weakly pseudoconvex domains of finite type in
${\mathbb C}^2$ and discovered  close connections of the type value
of the boundary and the H\"older-continuity of the peak functions up
to the boundary.

Generalization of Kohn's notion of the boundary finite type
condition to higher dimensions has been a subject under  extensive
investigations in the past  40 years in Several Complex Variables.
Kohn later introduced a  finite type condition in higher dimensions
through the subelliptic multiplier ideals  \cite{Kohn2}. The
understanding of this type  has later revived to be a very active
field of studies through the work of many people. (See
 Basyrov-Nicoara-Zaitsev\cite{BNZ},
Diederich-Fornaess \cite{DF}, Siu \cite{Siu1},
 Kim-Zaistev \cite{KZ}, Zaistev
\cite{Zai}  and the reference therein.)  Bloom \cite{Bl1} and
Bloom-Graham \cite{BG1} established Kohn's original notion of types
in ${\mathbb C}^2$ to any dimensions which are called the regular
multi-types. D'Angelo \cite{DA1} introduced his important and famous
notion of (D'Angelo) finite type conditions by considering the order
of contact with not just smooth complex manifolds but possibly
singular complex analytic varieties, which turns out to be
equivalent to the existence of the  subelliptic estimate by the work
of Kohn \cite{Kohn2}, Diederich-Fornaess \cite{DF} and Catlin
\cite{Cat2}. Catlin in \cite{Cat1} studied his famous multitype
condition as well as its connection with the boundary stratification
in terms of the degeneracy of  Levi forms. McNeal \cite{Mc} and
later Boas-Straube \cite{BS} studied the the line type condition for
convex domains and proved its equivalence with the D'Angelo type,
which was further applied by Fu-Isav-Krantz [FIK] to prove the
equivalence of the D'Angelo type with the regular contact type for
Reinhardt domains.

All these type conditions  mentioned above were introduced through
different aspects of studies. Revealing the connections among them
always brings our deeper understanding of the subject. For instance,
proving that the Kohn multiplier ideal  type is equivalent to the
finite D'Angelo type  would provide a new and much more direct
solution of the $\ov\partial$-Neumann problem \cite{Cat2}.

In this paper, we will be concerned with  the three multi-regular
types. We will be especially interested in the question when all
these types are equivalent, known as the Bloom problem. We will show
that the vector field type (which is also called the H\"{o}rmander
type in some other contents) coincides with the regular contact type
in the case of dimension three. This result, together with the work
of Bloom in 1981 \cite{Bl2} on the equality of the Levi-form type
with the contact type in ${\mathbb C}^3$, provides a complete
solution of Bloom's conjecture in the case of dimension three. In
general dimensions, we will show that the first three
pseudoconvexity-sensitive $(n-2)$-types all are the same. Our paper
makes a progress along the lines of the long-standing  Bloom
conjecture after his striking work in 1981.


\section{Statement of the main theorem}

Let $M\subset \mathbb{C}^n$ be a smooth real hypersurface with $p\in
M$. Then  dim$_{\mathbb{C}}T^{1,0}_pM=n-1$ for  $p\in M$. For any
$1\leq s\leq n-1$, we have the following three sets of important
local holomorphic  invariants (\cite{Bl2}), used to describe the
finite holomorphic  non-degeneracy of $M$ at $p$.

\medskip
\noindent (i): The $s$-contact type $a^{(s)}(M,p)$:
 \begin{equation}\begin{split}
  a^{(s)}(M,p)=\sup\limits_{X}\big\{r|\ &\exists  \text{ an $s$-dimensional complex submanifold}\ X\\
  &\text{whose order of contact  with $M$ at $p$ is $r$}\big\}.
 \end{split} \end{equation}

Let $\rho$ be a defining function of $M$ near $p$, namely, $\rho\in
C^\infty(U)$ with $U$ an open neighbourhood of $p\in\mathbb{C}^n$
and   $U\cap M=\{\rho=0\}\cap U$, $d\rho|_{U\cap M}\neq 0$.  Remark
that the order of contact of $X$ with $M$ at $p$ is defined as the
order of vanishing of $\rho|_X$ at $p$.
\bigskip

\noindent (ii) The $s$-vector field type $t^{(s)}(M,p)$:
\medskip

Let $B$ be an $s$-dimensional subbundle of $T^{1,0}M$. We let
$\mathcal{M}_1(B)$ be the $C^\infty(M)$-module spanned by the smooth
tangential $(1,0)$ vector fields $L$ with $L|_q\in B|_q$ for each
$q\in M$, together with the conjugate of these vector fields.

For $\mu\geq 1$, we let $\mathcal{M}_\mu(B)$ denote the
$C^\infty(M)$-module spanned by commutators of length less than or
equal to $\mu$ of vector fields from  $\mathcal{M}_1(B)$. A
commutator of length $\mu$ of vector fields in $\mathcal{M}_1(B)$ is
a vector field of the following form:
$[Y_{\mu},[Y_{\mu-1},\cdots,[Y_2,Y_1]\cdots]$. Here $Y_j\in
\mathcal{M}_1(B)$. Define $t^{(s)}(B,p)=m$ if $\langle F,\p
\rho\rangle(p)=0$ for any $F\in \mathcal{M}_{m-1}(B)$ but $\langle
G,\p \rho\rangle(p)\neq 0$ for a certain $G\in \mathcal{M}_{m}(B)$.
Then

\begin{equation}\begin{split}
  t^{(s)}(M,p)=\sup\limits_{B}\{t(B,p)|\ B\ \text{is an $s$-dimensional subbundle of\ }\ T^{1,0}M\}.
 \end{split} \end{equation}

$t^{(s)}(B,p)$ is the smallest length of the commutators by vector
fields  in $\mathcal{M}_1(B)$ to recover the complex contact
direction  in $\mathbb{C}T_pM$. $t^{(s)}(M,p)$ is the largest
possible value among all  $t^{(s)}(B,p)'s$. Namely, $t^{(s)}(M,p)$
describes the degeneracy of the most degenerate $s$-subbundles of
$T^{1,0}M$. Notice that it is intrinsically defined, independent of
the ambient embedded space.

\bigskip
\noindent (iii) The $s$-Levi form tpype $c^{(s)}(M,p)$:
\medskip

Let $B$ be as in (ii). Let $\mathcal{L}_{M,p}$ be a Levi form
associated with a defining function $\rho$ near $p$ of $M$. For
$V_B=\{L_1,\cdots,L_s\}$, a basis of smooth sections  of $B$ near
$p$, we define the trace of $\mathcal{L}_{M,p}$ along $V_B$ by

\begin{equation}
  \text{tr}_{V_B}\mathcal{L}_{M,p}=\sum\limits_{j=1}^{s}\langle  [L_j,\ov{L_j}], \p \rho\rangle (p).
 \end{equation}

We define $c(B,p)=m$ if for any $m-3$ vector fields
$F_1,\cdots,F_{m-3}$ of $\mathcal{M}_1(B)$ and any basis  $V_B$, it
holds that
$$
F_{m-3}\cdots F_{1}\big(\text{tr}_{V_B}\mathcal{L}_{M,p}\big)(p)=0
$$
and for a certain choice of $m-2$ vector fields $G_1,\cdots,G_{m-2}$
of  $\mathcal{M}_1(B)$ and a certain basis  $V_B$, we have
$$
G_{m-2}\cdots G_{1}\big(\text{tr}_{V_B}\mathcal{L}_{M,p}\big)(p)\neq
0.
$$
Then
\begin{equation}\begin{split}
  c^{(s)}(M,p)=\sup\limits_{B}\{c(B,p): \  B\ \text{ is an $s$-dimensional subbundle of} \ T^{1,0}M
  \}.
 \end{split} \end{equation}

In his fundamental paper \cite{Kohn1}, when $n=2$, Kohn showed that
$t^{(1)}(M,p)=c^{(1)}(M,p)=a^{(1)}(M,p)$. Bloom-Graham \cite{BG2}
and Bloom \cite{Bl1} proved   that
$$t^{(n-1)}(M,p)=c^{(n-1)}(M,p)=a^{(n-1)}(M,p)\ \ \hbox{for}\  M\subset
{\mathbb C}^n.$$ And for any $1\leq s\leq n-2$, Bloom in \cite{Bl2}
observed that $a^{(s)}(M,p)\leq c^{(s)}(M,p)$ and $a^{(s)}(M,p)\leq
t^{(s)}(M,p)$. For  these results to hold there is no need  to
assume  the pseudoconvexity  of $M$. However, the following example
of Bloom shows that for $n\geq  3$, when $M$ is not pesudoconvex, it
may happen that $a^{(s)}(M,p)< c^{(s)}(M,p)$ and $a^{(s)}(M,p)<
t^{(s)}(M,p)$ for $1\leq s\leq n-2$.

\begin{example}[Bloom, \cite{Bl2}] Let $\rho=2\text{Re}(w)+(z_2+\ov{z_2}+|z_1|^2)^2$
and let $M=\{(z_1,z_2,w)\in \mathbb{C}^3|\ \rho=0\}$. Let $p=0$.
Then $a^{(1)}(M,p)=4$ but $c^{(1)}(M,p)=t^{(1)}(M,p)=\infty$ .
\end{example}

With the pseudoconvexity assumption of  $M$, Bloom in \cite{Bl2}
showed that when $M\subset {\mathbb C}^3,$
$a^{(1)}(M,p)=c^{(1)}(M,p)$. Motivated by this result, Bloom in 1981
\cite{Bl2} formulated the following conjecture:

\begin{conj}\label {bloom-conj} Let $M\subset \mathbb{C}^n$ be a pseudoconvex real hypersurface with $n\geq 3$. Then for any $1\leq s\leq n-2$ and $p\in M$,
\begin{equation*}\begin{split}
t^{(s)}(M,p)=c^{(s)}(M,p)=a^{(s)}(M,p).
\end{split} \end{equation*}

\end{conj}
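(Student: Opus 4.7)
The plan is to reduce everything to Bloom's inequalities $a^{(s)}(M,p) \leq \min\{c^{(s)}(M,p),\, t^{(s)}(M,p)\}$ (which hold without pseudoconvexity) and to establish the two reverse inequalities under pseudoconvexity. Given an $s$-dimensional subbundle $B \subset T^{1,0}M$ with $t^{(s)}(B,p) = m$ or $c^{(s)}(B,p) = m$, the goal is to produce an $s$-dimensional complex submanifold $X$ through $p$ whose order of contact with $M$ at $p$ is $\geq m$; feeding $X$ into the definition of $a^{(s)}(M,p)$ then closes the loop.

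First I would pass to local coordinates $(z_1, \ldots, z_{n-1}, w)$ centered at $p$ putting $M$ in Bloom--Graham normal form
$$\rho = 2\operatorname{Re} w + H(z, \bar z, \operatorname{Im} w), \qquad H(0)=0,\ \ dH(0)=0,$$
and, after a linear adjustment, assume $B|_p = \operatorname{span}\{\partial_{z_1}, \ldots, \partial_{z_s}\}$, extending $B$ smoothly nearby. The core step is then a degeneracy claim: under pseudoconvexity, the $s \times s$ Hermitian Levi block associated with $B$, together with all its $(z, \bar z, \operatorname{Im} w)$-derivatives of order $\leq m-3$, vanishes at $0$. I would prove this in two stages. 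The trace version (relevant for $c^{(s)}$) amounts to translating the vanishing of derivatives of $\operatorname{tr}_{V_B}\mathcal{L}_{M,p}$ into vanishing of derivatives of $\sum H_{i \bar i}$, and then using positive semi-definiteness of the full Levi form so that the trace controls each diagonal entry and Cauchy--Schwarz controls the off-diagonals. The commutator version (relevant for $t^{(s)}$) requires an inductive scheme on bracket length: one rewrites iterated commutators of $\partial_{z_j}$-type vector fields in terms of partial derivatives of $H$ and uses pseudoconvexity at each step to prevent spurious cancellation, recovering the same entry-wise vanishing.

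With the $B$-block of $H$ degenerate to order $m-2$, I would construct $X$ as a holomorphic graph $\zeta \mapsto (\zeta, \phi(\zeta), \psi(\zeta))$ over $B|_p$, with $\phi$ and $\psi$ chosen order by order in their Taylor expansions to cancel the leading interaction terms in $\rho|_X$; the vanishing of the $B$-block then forces $\rho|_X = O(|\zeta|^m)$. The main obstacle is the commutator version of the degeneracy claim for $s < n-2$: when the complex complement of $B$ in $T^{1,0}M$ has dimension $n-1-s > 1$, iterated commutators of sections of $B \oplus \bar B$ pick up contributions from several transverse CR directions at once, and positive semi-definiteness alone no longer pins the $B$-block down. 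This is presumably why the main theorem specializes to $s = n-2$ in general dimensions, where the transverse complement is one-dimensional and all the cross terms collapse into a single scalar constraint; extending the same scheme to arbitrary $s$ would demand a substantially more delicate control of multi-directional cross terms and is where I expect the full Bloom conjecture to genuinely bite.
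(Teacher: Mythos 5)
The statement you are asked to address is Bloom's \emph{conjecture}, not the paper's theorem; the paper itself only proves the case $s=n-2$ (Theorem~\ref{mainthm}), and you rightly observe at the end that the full conjecture is still open. So the question is really whether your sketch would prove the $s=n-2$ case, and here there are two serious gaps.

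First, the step from ``$s\times s$ Levi block of $H$ degenerate to order $m-2$'' to ``there is an $s$-dimensional complex submanifold $X$ with contact order $m$'' is precisely the step that fails in Bloom's own counterexample $\rho=2\operatorname{Re}w+(z_2+\bar z_2+|z_1|^2)^2$: there the Levi form in the $z_1$-direction vanishes to infinite order, yet $a^{(1)}=4$, because the pluriharmonic-in-$z_2$ term $(\operatorname{Re}z_2)^2$ obstructs the order-by-order cancellation you propose. You locate the use of pseudoconvexity entirely inside the ``degeneracy claim,'' but pseudoconvexity has to enter again, and more deeply, when you try to cancel the interaction terms between the $B$-directions and the transverse variables $z_{n-1},w$. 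Your proposal does not explain how to control those cross terms or the pure/pluriharmonic terms, and simply choosing Taylor coefficients of $\phi,\psi$ does not, by itself, kill them. Second, your commutator-version ``degeneracy claim'' for $t^{(s)}$ is asserted but not argued: the phrase ``uses pseudoconvexity at each step to prevent spurious cancellation'' is exactly the content that is missing, and in fact the Diederich--Fornaess closedness-of-null-space argument that works at the Levi-form level (which the paper invokes via \cite[Prop.~2]{DF} for $c^{(n-2)}$) does \emph{not} directly transfer to iterated commutators.

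The paper takes a genuinely different route for $s=n-2$. It never constructs $X$ directly. Instead it (i) normalizes the vector fields $S_j=\sum a_{jh}L_h$ and extracts a new invariant $l_0$ from the coefficients $a_{j(n-1)}$ (Corollary~\ref{NN}); (ii) introduces an $l_0$-dependent weight system, truncates $M$ to a weighted-homogeneous model $M^0$ and the $S_j$ to $S_j^0$, and applies Nagano's theorem to the resulting real-analytic module to obtain an integral submanifold $N^0\subset M^0$ of real dimension $2n-3$ or $2n-2$; (iii) when $\dim N^0=2n-2$, writes $N^0$ as a graph $w=f(z,\bar z)$ and converts the tangency of $\bar S_j^0$ into the first-order complex PDE (\ref{sf}) for $f$; the contradiction is then forced by the positivity results of Section~5 and by Theorem~\ref{main-tech}, which says that a weighted-homogeneous solution of $f_{\bar z_1}+\bar A f_{\bar z_2}=0$ with plurisubharmonic real part and no pluriharmonic terms must have $\operatorname{Re}f\equiv0$; (iv) when $\dim N^0=2n-3$, one instead lands on a finite-type CR hypersurface contained in the zero set of a plurisubharmonic function, and Proposition~\ref{disc} (Hopf lemma on attached analytic discs plus Tr\'epreau's filling theorem) kills it. These analytic ingredients --- the Euler vector field, the uniqueness theorem for the PDE, and the disc argument --- are the actual carriers of pseudoconvexity; the graph construction you propose would need something equivalent, and as written it does not supply it.
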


In this paper, we make a progress along the lines of the Bloom conjecture by  presenting the proof of the following theorem:

\begin{thm}\label{mainthm}
Let $M\subset \mathbb{C}^n$ be a smooth pseudoconvex real
hypersurface with $n\geq 3$. Then for  $s= n-2$ and any $p\in M$, it
holds that
\begin{equation*}\begin{split}
t^{(n-2)}(M,p)=a^{(n-2)}(M,p)=c^{(n-2)}(M,p).
\end{split} \end{equation*}

\end{thm}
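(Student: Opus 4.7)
The unconditional inequalities $a^{(s)}(M,p) \leq t^{(s)}(M,p)$ and $a^{(s)}(M,p) \leq c^{(s)}(M,p)$ noted above (observations of Bloom) reduce the problem to establishing the two reverse bounds
\[
t^{(n-2)}(M,p) \leq a^{(n-2)}(M,p) \quad \text{and} \quad c^{(n-2)}(M,p) \leq a^{(n-2)}(M,p)
\]
under the pseudoconvexity hypothesis. My plan is to attack both inequalities in parallel: fix a subbundle $B \subset T^{1,0}M$ of rank $n-2$ attaining the supremum defining the relevant invariant on the left, and then produce an explicit $(n-2)$-dimensional complex submanifold $X \subset \mathbb{C}^n$ whose order of contact with $M$ at $p$ realizes that value.

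The decisive structural feature of the case $s = n-2$ is that $B$ has complex codimension one in $T^{1,0}M$. After a local frame change one obtains a splitting $T^{1,0}M = B \oplus \mathbb{C} L_0$ with a single distinguished transverse direction $L_0$. I would work in local holomorphic coordinates $(z_1, \ldots, z_{n-2}, z_{n-1}, w)$ centered at $p$ in which
\[
\rho = 2\,\mathrm{Re}(w) + \phi(z, \bar z) + O(|w|),
\]
with $B|_p$ spanned by $\partial/\partial z_1, \ldots, \partial/\partial z_{n-2}$ and $L_0|_p$ corresponding to $\partial/\partial z_{n-1}$. Pseudoconvexity forces $\phi$ to be plurisubharmonic, and the Cauchy--Schwarz relation between diagonal and off-diagonal Levi entries then rigidly couples the mixed monomials of $\phi$ involving $z_{n-1}$ or $\bar z_{n-1}$ to the pure polynomial in the variables $z_1, \ldots, z_{n-2}$.

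The analytic heart of the argument is a weighted-Taylor analysis of $\phi$ adapted to the $B \oplus \mathbb{C} L_0$ decomposition. I expect both $t^{(n-2)}(B,p)$ and $c^{(n-2)}(B,p)$ to admit a common description as the smallest weighted degree at which the component of $\phi$ coupling the $L_0$-direction to the $B$-directions becomes nontrivial. Geometrically I would then seek a complex submanifold of the form
\[
X = \bigl\{ w = 0, \ z_{n-1} = h(z_1, \ldots, z_{n-2}) \bigr\},
\]
with $h$ a holomorphic germ vanishing at the origin chosen to cancel the $z_{n-1}$-dependence of $\phi$ to the largest possible order. The pullback $\rho|_X$ will then vanish exactly to the algebraically predicted order, producing both reverse inequalities simultaneously.

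The main obstacle I anticipate lies in the construction and justification of $h$. The cancellation problem is intrinsically nonlinear, and the coupling of cross-terms between the $B$-directions and $L_0$ imposes delicate compatibility conditions that fail when $B$ has codimension greater than one. Bloom's original $n = 3$, $s = 1$ argument for $c^{(1)} = a^{(1)}$ collapsed to a one-variable problem in $z_1$; extending to all $n \geq 3$, and simultaneously to the vector-field invariant $t^{(n-2)}$, requires systematically exploiting plurisubharmonicity to transfer the transverse $L_0$-degeneracy of $\phi$ into along-$B$ degeneracy by a suitable choice of $h$. This transfer is available precisely because $L_0$ spans a one-dimensional complement of $B$. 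Once $h$ is produced, the contact-order computation closes the argument and establishes the three-way equality $t^{(n-2)}(M,p) = a^{(n-2)}(M,p) = c^{(n-2)}(M,p)$.
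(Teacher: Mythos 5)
Your overall framing is correct: Bloom's inequalities reduce the theorem to $t^{(n-2)}(M,p)\le a^{(n-2)}(M,p)$ and $c^{(n-2)}(M,p)\le a^{(n-2)}(M,p)$, and pseudoconvexity must be the lever. But the proposal's central step---producing a holomorphic germ $h(z_1,\dots,z_{n-2})$ so that $X=\{w=0,\ z_{n-1}=h\}$ realizes the type value as a contact order---is precisely the gap you flag and never fill, and it is not how the paper proceeds. Such a direct construction is far from automatic: the degeneracy encoded by $B$ mixes holomorphic and antiholomorphic directions through the coefficients $a_{j(n-1)}$, and there is no visible linear or weighted cancellation mechanism that would let a holomorphic graph absorb it. The paper does not attempt this; it argues by contradiction instead, assuming $t^{(n-2)}>a^{(n-2)}$ (resp.\ $c^{(n-2)}>a^{(n-2)}$) and showing this is inconsistent with pseudoconvexity.

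More concretely, the machinery that actually carries the proof is entirely absent from your sketch. After a normalization of the frame $\{S_j\}$ (Corollary~\ref{NN}) and the introduction of a Bloom-type weight system, the paper passes to weighted-homogeneous truncations $S_j^0$, $\rho^{[m]}$ and applies the Nagano theorem to the module $\mathcal{M}^0_\infty$, producing a real-analytic integral submanifold $N^0\subset M^0$ of dimension $2n-3$ or $2n-2$. When $\dim_{\mathbb R} N^0=2n-3$, $N^0$ is a finite-type CR hypersurface and pseudoconvexity enters via a Hopf-lemma argument combined with Tr\'epreau's filling theorem (Proposition~\ref{disc}), forcing $\rho^{[m]}\equiv0$. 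When $\dim_{\mathbb R} N^0=2n-2$, $N^0$ is CR singular and is a graph $w=f(z,\ov z)$ with $f$ \emph{not} holomorphic in general; the equation $f_{\ov{z_j}}+\ov{a^{(k-1)}_{j(n-1)}}\,f_{\ov{z_{n-1}}}=0$ together with the plurisubharmonicity of $\mathrm{Re}\,f$ then yields a contradiction via the technical Theorem~\ref{main-tech}, whose proof occupies $\S5$--$\S6$ and is the analytic core of the paper. Your proposed ``common weighted-degree description'' and ``transfer of $L_0$-degeneracy by choice of $h$'' gesture at the right intuition (codimension one of $B$ is indeed what makes the problem tractable, and the Euler vector field is exploited in the paper), but without the Nagano integral manifold, the $(2n-3)/(2n-2)$ dichotomy, the Hopf-lemma input, and the PDE uniqueness statement, there is no argument here that closes.
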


In particular,  we obtain a proof of the remaining case of the Bloom
conjecture in the case of complex dimension three by showing that
when $M\subset {\mathbb C}^3$, we  have $t^{(1)}(M,p)=a^{(1)}(M,p).$
($s=n-2=1$ for $n=3$). This, together with the work of Bloom in 1981
on the equality $c^{(1)}(M,p)=a^{(1)}(M,p)$ for $M\subset {\mathbb
C}^3$, finally provides a complete solution of the Bloom conjecture
in the three dimensional case.

\begin{thm}\label{thm-dim-three} The Bloom conjecture holds in the case of complex dimension three. Namely, for   a smooth pseudoconvex real hypersurface $M\subset \mathbb{C}^3$  and $p\in M$, it holds that
\begin{equation*}\begin{split}
t^{(1)}(M,p)=a^{(1)}(M,p)=c^{(1)}(M,p).
\end{split} \end{equation*}

\end{thm}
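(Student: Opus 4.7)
The plan is to deduce Theorem \ref{thm-dim-three} directly from Theorem \ref{mainthm}. Setting $n=3$ in Theorem \ref{mainthm}, the index $s=n-2$ becomes $s=1$, and the three equalities asserted there, $t^{(n-2)}(M,p)=a^{(n-2)}(M,p)=c^{(n-2)}(M,p)$, read verbatim as the conclusion of Theorem \ref{thm-dim-three}. Thus, granting Theorem \ref{mainthm}, no additional argument is required and Theorem \ref{thm-dim-three} is obtained as an immediate corollary.

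It is nevertheless instructive to record the logical decomposition that the introduction emphasizes. First, the inequality $a^{(1)}(M,p)\leq \min\{t^{(1)}(M,p),c^{(1)}(M,p)\}$ is Bloom's elementary general observation, which does not rely on any pseudoconvexity assumption. Second, the equality $c^{(1)}(M,p)=a^{(1)}(M,p)$ for pseudoconvex $M\subset\mathbb{C}^3$ is exactly Bloom's 1981 theorem from \cite{Bl2}. The remaining and genuinely new ingredient is the equality $t^{(1)}(M,p)=a^{(1)}(M,p)$, which is precisely the $n=3$ instance of Theorem \ref{mainthm}. Chaining these three facts yields the full three-way equality asserted by Theorem \ref{thm-dim-three}.

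The genuine obstacle therefore lies not in this reduction but inside the proof of Theorem \ref{mainthm}. There the hard direction is $t^{(n-2)}(M,p)\leq a^{(n-2)}(M,p)$, since the reverse inequality is already the general fact recalled above. Establishing it amounts to producing, from an $(n-2)$-dimensional subbundle $B\subset T^{1,0}M$ that realizes the supremum defining $t^{(n-2)}(M,p)$, an $(n-2)$-dimensional complex submanifold through $p$ whose order of contact with $M$ attains the vector-field type of $B$. Pseudoconvexity must play an essential role in this construction, since Bloom's example above already exhibits a non-pseudoconvex hypersurface in $\mathbb{C}^3$ where $a^{(1)}$ is finite but $t^{(1)}$ is infinite.
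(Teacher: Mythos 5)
Your derivation of Theorem \ref{thm-dim-three} as the $n=3$ specialization of Theorem \ref{mainthm} (so that $s=n-2=1$) is exactly the paper's argument, and your supplementary decomposition into Bloom's elementary inequality, Bloom's 1981 equality $c^{(1)}=a^{(1)}$, and the new equality $t^{(1)}=a^{(1)}$ also matches the paper's framing. One minor caveat on your closing paragraph: the paper does not prove $t^{(n-2)}\leq a^{(n-2)}$ by constructing a complex submanifold realizing the vector-field type; it argues by contradiction, supposing $t^{(n-2)}>a^{(n-2)}$ and ruling this out via the Nagano theorem together with the pseudoconvexity-based positivity results Proposition \ref{disc} and Theorem \ref{main-tech}.
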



Our proof of Theorem \ref{mainthm} is a combination of analytic and
geometric  arguments along the lines of CR geometry. (See  the book
by Baouendi-Ebenfelt-Rothschild \cite {BER}). Our paper is mainly on
commutators of vector fields. Commutators of vector fields
 are not just important in complex analysis but also play
a fundamental role in many problems bordering complex analysis and
sub-elliptic analysis. For instance, in the paper of Adwan-Berhanu
\cite{AB}, the commutator type condition of  vector fields is
 crucially applied to get various real analytic hypo-ellipticity
results. See also the book of Berhanu-Cordaro-Hounie \cite{BCH} and
a paper of Derridj \cite{Derr} for many references and historical
discussions on this matter.
 In $\S 3$, we give a
general set-up and provide a normalization of the related vector
fields. In $\S 4$, we give a proof of Theorem \ref{mainthm} assuming
Theorem \ref{main-tech}. $\S5$ and $\S 6$ are dedicated to the long
proof of Theorem \ref{main-tech} on a sort of  uniqueness of a
complex linear PDE associated with a CR singular submanifold
contained in a psuedoconvex hypersurface.
Already from the work of Chern-Moser [CM], it is clear that a good
weight system is always important to single out the boundary
holomorphic invariants for real hypersurfaces  in a complex
Euclidean space. (See also
\cite{Bl2,BG2,Cat1,GS1,GS2,MW,HY,Kol1,Kol2} and the references
therein concerning different weight systems used in different
settings). In this work, we will adapt the weight system introduced
by Bloom in \cite{Bl2} to truncate the real hypersurface so that the
singular Frobenius-Nagano theorem can be applied. Then we will
derive contradictions if the theorem fails to be true by proving the
non-existence of certain CR manifolds (through Proposition
\ref{disc}) and CR singular manifolds (through Theorem
\ref{main-tech}) generated by the truncated CR vector fields in the
truncated hypersurface.  To attack the Bloom conjecture, it is
crucial to find a good use of the pseudoconvexity. In the case
considered by Bloom \cite{Bl2} for equality of $a^1(M,p)=c^1(M,p)$
of dimension three, the pseudoconvexity is  used to fundamentally
apply a neat result of Diederich-Fornaess [DF] (see also Freedman
[Fre]), which says the Lie-bracket operation is closed for sections
in the null space of Levi- form. This result  can not be applied in
the case we are considering. To handle the major difficulties in our
consideration, the pseudoconvexity is used for the validity of the
Hopf lemma (Proposition \ref{disc}) and for obtaining the triviality
of solutions of a complex linear equation with real part
plurisubharmonic (Theorem \ref{main-tech}). It is also interesting
to notice  the  important role played by the Euler vector field in
the course of  our proof.

To finishing off this section, we would like to  mention a result by
D'Angelo \cite{DA2} which shows that for a smooth pseudoconvex real
hypersurface $M\subset \mathbb{C}^n$ $(n\geq3)$ and for $p\in M$, if
one of the two invariants $t^{(1)}(B_1,p)\ \hbox{and}\
c^{(1)}(B_1,p)$ is $4$ then they both are $4$. Here $B_1$ is a one
dimensional smooth subbundle of $T^{(1,0)}M$.  He also obtained some
estimate of $c^{(1)}(B_1,p)$ in terms of  $t^{(1)}(B_1,p)$. Here we
mention that due to the pseudoconvexity, when $M$ at $p$ is not
strongly pseudoconvex, then $t^{(1)}(B_1,p)$ and $c^{(1)}(B_1,p)$
are at least $4$. We also mention the paper by McNeal-Mernik
(\cite{MM}) and the paper by D'Angelo \cite {DA4} on equality of the
regular contact type with the D'Angelo type when either one  is 4
under the pseudo convexity or even some weaker conditions.

\section{Normalization of CR vector fields}

Denote by $(z_1,\cdots,z_{n-1},w)$ the coordinates in
$\mathbb{C}^{n}$. Let $M\subset U$ be a smooth real hypersurface in
$ {\mathbb C}^n$ with $p\in M$ and let $\rho$ be a defining function
of $M$ near $p$. After a holomorphic change of coordinates, we may
assume that $p=0$ and $\rho$ takes the following form:
\begin{equation}\label{rho}
\begin{split}
\rho=-2\text{Re}(w)+\chi(z,\ov{z},\text{Im}{w}), \
\chi(z,\ov{z},\text{Im}{w})=O(|z^2|+|z\text{Im}{w}|).
\end{split}
\end{equation}
We will assume that $a^{(n-2)}(M,0)<\infty$ in all that follows, for
otherwise $$t^{(n-2)}(M,0), c^{(n-2)}(M,0)\ge
a^{(n-2)}(M,0)=\infty$$ and thus all these invariants coincide.
After a holomorphic change of coordinates of the form
$(z',w')=(z,w+O(2))$, we  assume that
\begin{equation}\label{R}
\begin{split}
\chi(z,0,0)=O(a^{(n-2)}(M,0)+1).
\end{split}
\end{equation}
Shrinking $U$ if necessary, we  assume $\frac{\p \rho}{\p
w}(z,w)\neq 0$ for $(z,w)\in U$. For a defining function $\rho$
defined over $U$ as in (\ref{rho}), write
\begin{equation}\begin{split}\label{L}
L_i=\frac{\p }{\p z_i}-\frac{\p \rho}{\p z_i}\big(\frac{\p \rho}{\p
w}\big)^{-1}\frac{\p }{\p w}\ \ \text{for}\ i=1,\cdots,n-1.
\end{split}\end{equation}
Then $\{L_i\}_{i=1}^{n-1}$ forms a basis for the space of  CR vector
fields along $M$. Let $B$ be an $(n-2)$ dimensional subbundle of
$T^{1,0}M$. Assume that the sections of $B$ are generated by a
certain linearly independent smooth CR vector fields $S_1,\cdots,
S_{n-2}$ along $M$ near $0$. After a linear holomorphic change of
coordinates, we assume that $S_j(0)=L_j(0)=\frac{\partial }{\p
z_j}|_0$ for $1\leq j \leq n-2$. Write
\begin{equation} \label{norm-001}
 S_j=\sum_{h=1}^{n-1}
a_{jh}L_h\ \hbox{with }a_{jh}(0)=\delta_{jh} \ \hbox{for }1\leq
j,h\leq n-2.
\end{equation}


\medskip

We recall the following fact from \cite[Lemma 5.2]{Bl2}, which gives
the transformation law for $\{L_j',\rho'\}$ and $\{L_j,\rho\}$ under
a holomorphic change of coordinates $(z',w')=F(z,w)$ with
$\rho=\rho'\circ F$, $F(0)=0$.

\begin{lem}\label{tranrel}
   Let $(z',w')=F(z,w)=(z_1',\cdots,z_{n-1}',w')$ be a new holomorphic coordinate system where
   $z'_j=z'_j(z_1,\cdots,z_{n-1})$ for $j=1,\cdots,n-1$, $w'=w$ with $z'(0)=0$. Then we have
\begin{equation}\begin{split}
F_{*}(L_i)=\sum\limits_{j=1}^{n-1}\frac{\p z_j'}{\p z_i}L_j' \ \
\text{for}\ i=1,\cdots,n-1.
\end{split}\end{equation}

\end{lem}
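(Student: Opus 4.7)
The plan is to prove Lemma \ref{tranrel} by a direct chain rule computation. The lemma has a triangular structure in the coordinate change ($w' = w$ and $z'$ depends only on $z$), and this is exactly what forces the CR fields $L_i$ defined in (\ref{L}) to transform like the coordinate vector fields $\p/\p z_i$ by the Jacobian $\p z'_j/\p z_i$, with no extra terms. No pseudoconvexity or deeper geometric input is needed; the only ingredients are the definition of $L_i$ and the identity $\rho=\rho'\circ F$.

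First I would push forward the coordinate derivatives. Because $w' = w$, one immediately has $F_*(\p/\p w) = \p/\p w'$; and because $z'_j$ does not depend on $w$, the pushforward $F_*(\p/\p z_i)$ has no $\p/\p w'$ component and equals $\sum_{j=1}^{n-1}(\p z'_j/\p z_i)\,\p/\p z'_j$. Next I would use $\rho = \rho' \circ F$ to rewrite the coefficient appearing in $L_i$: the chain rule gives
\[
\frac{\p \rho}{\p z_i} \;=\; \sum_{j=1}^{n-1}\frac{\p \rho'}{\p z'_j}\frac{\p z'_j}{\p z_i}, \qquad \frac{\p \rho}{\p w} \;=\; \frac{\p \rho'}{\p w'},
\]
so the ratio equals $\sum_j (\p z'_j/\p z_i)\,(\p \rho'/\p z'_j)(\p \rho'/\p w')^{-1}$. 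Substituting both pushforwards into $F_*(L_i) = F_*(\p/\p z_i) - (\p \rho/\p z_i)(\p \rho/\p w)^{-1}F_*(\p/\p w)$ and collecting the common factor $\p z'_j/\p z_i$ yields
\[
F_*(L_i) \;=\; \sum_{j=1}^{n-1}\frac{\p z'_j}{\p z_i}\left(\frac{\p}{\p z'_j} - \frac{\p \rho'/\p z'_j}{\p \rho'/\p w'}\frac{\p}{\p w'}\right) \;=\; \sum_{j=1}^{n-1}\frac{\p z'_j}{\p z_i}\,L'_j,
\]
where the last equality is the definition (\ref{L}) applied in the primed coordinates (note that $\p \rho'/\p w'\neq 0$ near $0$ by the chain rule and the assumption $\p \rho/\p w\neq 0$).

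The computation contains no serious obstacle; the only point to be careful about is the triangular form of the coordinate change, which ensures both $F_*(\p/\p w) = \p/\p w'$ on the nose and the absence of any $\p/\p w'$ contribution in $F_*(\p/\p z_i)$. For a general biholomorphism mixing $z$ and $w$ these simplifications would fail, and extra terms involving $\p w'/\p z_i$ would appear on the right-hand side.
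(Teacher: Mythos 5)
Your proof is correct. The paper itself does not prove Lemma \ref{tranrel}; it is recalled from \cite[Lemma 5.2]{Bl2}, and your direct chain--rule computation is exactly the standard argument one would give for it. The one point worth making explicit, which you handle implicitly, is that when you apply the chain rule to the real function $\rho=\rho'\circ F$ the antiholomorphic contributions $\sum_j \frac{\p\rho'}{\p\ov{z'_j}}\frac{\p\ov{z'_j}}{\p z_i}$ and $\frac{\p\rho'}{\p\ov{w'}}\frac{\p\ov{w'}}{\p z_i}$ vanish precisely because $F$ is holomorphic, and the term $\frac{\p\rho'}{\p w'}\frac{\p w'}{\p z_i}$ vanishes because $w'=w$; these facts, together with $\p z'_j/\p w=0$, are what you correctly identify as the ``triangular'' structure that makes the coefficient of $\p/\p w'$ collapse and produces $L'_j$ on the nose.
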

With $S_j$ and the frame $\{L_j\}$  being  given as above, we define
\begin{equation}\begin{split}
l^*_0=&\min_{1\leq j\leq n-2}\{k_j:\ k_j =\text{vanishing order of}\
a_{j(n-1)}(z_1,\cdots,z_{n-2},0,\ov{z_1}. \cdots,\ov{z_{n-2}},0)\
\hbox{at 0}\}.
\end{split}\end{equation}
In this section, for a smooth function $A$,  we write
$A^{(\tau)}(z,\ov{z})$ for the sum of  monomials of (ordinary)
degree $\tau$ in its Taylor expansion at $0$; also when we mention a
holomorphic change of coordinates, we  refer to a special type of
holomorphic maps of the form $(z',w')=F(z,w)$ as in Lemma
\ref{tranrel}.

\begin{lem}\label{firstn} Suppose $l_0^*\not =\infty$.
  After a holomorphic change of coordinates, we  have
   $$a^{(l^*_0)}_{j(n-1)}(0,\cdots,0,z_j,\cdots,z_{n-2},0,\cdots,0)=0\ \text{for all }\ 1\leq j\leq n-2.$$
\end{lem}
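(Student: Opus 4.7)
The plan is to realize the desired normalization through a holomorphic change of coordinates of the form $z'_h=z_h$ for $h\le n-2$, $z'_{n-1}=z_{n-1}+\phi(z_1,\ldots,z_{n-2})$, $w'=w$, with $\phi$ a homogeneous polynomial of degree $l_0^*+1$ to be determined. By Lemma~\ref{tranrel}, the pushforwards are $F_*(L_h)=L'_h+(\partial\phi/\partial z_h)\,L'_{n-1}$ for $h\le n-2$ and $F_*(L_{n-1})=L'_{n-1}$. Substituting into (\ref{norm-001}), the coefficients of the new basis $F_*(S_j)$ of $F_*(B)$ satisfy
$$\tilde a_{jh}=a_{jh}\ (h\le n-2),\qquad \tilde a_{j(n-1)}=a_{j(n-1)}+\sum_{h=1}^{n-2}a_{jh}\frac{\partial\phi}{\partial z_h},$$
so in particular the normalization $a_{jh}(0)=\delta_{jh}$ is automatically preserved.

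Because $a_{jh}(0)=\delta_{jh}$ for $h\le n-2$ and $\partial\phi/\partial z_h$ is homogeneous of degree $l_0^*$, every contribution $(a_{jh}-\delta_{jh})\cdot\partial_h\phi$ has degree $\ge l_0^*+1$, so the degree-$l_0^*$ part of the correction collapses to $\partial\phi/\partial z_j$. Hence
$$\tilde a_{j(n-1)}^{(l_0^*)}=a_{j(n-1)}^{(l_0^*)}+\frac{\partial\phi}{\partial z_j},$$
and the lemma reduces to finding a single $\phi$ satisfying
$$\frac{\partial\phi}{\partial z_j}(0,\ldots,0,z_j,\ldots,z_{n-2})=-a_{j(n-1)}^{(l_0^*)}(0,\ldots,0,z_j,\ldots,z_{n-2},0,\ldots,0)$$
for every $1\le j\le n-2$ simultaneously. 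I would construct this iteratively, from $j=n-2$ down to $j=1$. At step $j$, perform the change with
$$\phi_j(z_j,\ldots,z_{n-2})=-\int_0^{z_j}a_{j(n-1)}^{(l_0^*)}(0,\ldots,0,t,z_{j+1},\ldots,z_{n-2},0,\ldots,0)\,dt,$$
a homogeneous polynomial of degree $l_0^*+1$ that vanishes identically on $\{z_j=0\}$.

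The essential triangular observation is twofold: (i) $\phi_j$ does not depend on $z_i$ for $i<j$, so the conditions with indices $i<j$ (still to be achieved) remain unchanged; and (ii) for any $i>j$ already normalized in an earlier step, the restriction to the slice $(0,\ldots,0,z_i,\ldots,z_{n-2})$ forces $z_j=0$, and since $\phi_j$ vanishes on $\{z_j=0\}$ so does $\partial\phi_j/\partial z_i$ restricted there, so the earlier normalizations survive. After $n-2$ iterations the stated identity holds. The main, though modest, obstacle is precisely to recognize this triangular structure: $\phi_j$ must both involve only $z_j,\ldots,z_{n-2}$ and vanish on $\{z_j=0\}$, and the definite anti-derivative above is the natural construction achieving both. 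Once this is in place, the verification is an elementary computation via Lemma~\ref{tranrel}.
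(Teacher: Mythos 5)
Your proof is correct and is essentially the paper's argument: a shear $z'_{n-1}=z_{n-1}+\phi$ with $\phi$ the definite anti-derivative of the offending coefficient along $z_j$, with preservation coming from the triangular structure ($\phi_j$ depends only on $z_j,\ldots,z_{n-2}$ and is divisible by $z_j$). The only cosmetic difference is that the paper runs the induction upward from $j=1$ to $j=n-2$, so its preservation step uses only the first of your two triangular observations (independence from earlier variables), whereas your downward induction needs the second (divisibility by $z_j$) to protect the already-normalized slices.
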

\begin{proof}
Let
$$
z_j'=z_j\ \text{for}\ 1\leq j\leq n-2,\ z_{n-1}'=z_{n-1}-\int_0^{z_{1}}a^{(l^*_0)}_{1(n-1)}(\xi,z_2,\cdots,z_{n-2},0,\cdots,0)d\xi,\ w=w'.
$$
Then in the new coordinates $(z',w')$, we have
\begin{equation}\begin{split}
&\frac{\p }{\p z_1}=\frac{\p }{\p z_1'}-a^{(l^*_0)}_{1(n-1)}(z_1,\cdots,z_{n-2},0,\cdots,0)\frac{\p }{\p z_{n-1}'},\\
&\frac{\p }{\p z_j}=\frac{\p }{\p z_j'}+O(l^*_0)\frac{\p }{\p z_{n-1}'}\ \text{for}\ 2\leq j\leq n-2,\\
& \frac{\p }{\p z_{n-1}}=\frac{\p }{\p z_{n-1}'}.
\end{split}\end{equation}
In the new coordinates, by Lemma \ref{tranrel}, we have
 \begin{equation}\begin{split}
 S_1=&\sum\limits_{h=1}^{n-1} a_{1h}L_h=a_{11}(L_1'-a^{(l^*_0)}_{1(n-1)}(z_1,\cdots,z_{n-2},0,\cdots,0)L_{n-1}')\\
   &+\sum_{h=2}^{n-2} a_{1h}(L_h'+O(l^*_0)L_{n-1}')+a_{1(n-1)}L_{n-1}'.
\end{split}\end{equation}
  Hence in the new coordinates, the coefficient $a_{1(n-1)}$ is changed to $$-a_{11}a^{(l^*_0)}_{1(n-1)}(z_1,\cdots,z_{n-2},0,\cdots,0)+\sum_{h=2}^{n-2}a_{1h}\cdot O(l^*_0)+a_{1(n-1)}.$$
Recall that $a_{1j}=\delta_{1j}+o(1)$ for $1\leq j\leq n-2$. Hence
in these new coordinates, which are still denoted by $(z,w)$, we
have $a^{(l^*_0)}_{1(n-1)}(z_1,\cdots,z_{n-2},0,\cdots,0)=0$.

Suppose that we  have achieved
$a^{(l^*_0)}_{h(n-1)}(0,\cdots,0,z_h,\cdots,z_{n-2},0,\cdots,0)=0$
for $1\leq h\leq j-1$.
 We next show that we can make $a^{(l^*_0)}_{j(n-1)}(0,\cdots,0,z_j,\cdots,z_{n-2},0,\cdots,0)=0$ after a
 holomorphic change of coordinates. Set $
w=w'$ and
$$
z_j'=z_j, 1\leq j\leq n-2,\
z_{n-1}'=z_{n-1}-\int_0^{z_j}a^{(l^*_0)}_{j(n-1)}(0,\cdots,0,\xi,z_{j+1},\cdots,z_{n-2},0,\cdots,0)d\xi.
$$
By a similar argument as in the proof for $a^{(l^*_0)}_{1(n-1)}(z_1,\cdots,z_{n-2},0,\cdots,0)=0$, we have
$$a^{(l^*_0)}_{j(n-1)}(0,\cdots,0,z_j,\cdots,z_{n-2},0,\cdots,0)=0.$$
Notice that this transformation of coordinates preserves the property:
 $$a^{(l^*_0)}_{h(n-1)}(0,\cdots,0,z_h,\cdots,z_{n-2},0,\cdots,0)=0 \hbox{ for }1\leq h\leq j-1.$$
 By induction, this completes the proof of Lemma \ref{firstn}.
\end{proof}

Next, after the normalization as in (\ref{firstn}),  we either have
\begin{equation}l_0:=\label{l0big} \min_{1\leq j\leq n-2} \{k_j:\ k_j=
\text{ord}_{z=0}\ a_{j(n-1)}(z_1,\cdots,z_{n-2},0,\ov{z_1},\\
\cdots,\ov{z_{n-2}},0)\}\geq a^{(n-2)}(M,0)
\end{equation}
 or
 $l_0\leq a^{(n-2)}(M,0)-1.$
In the case of (\ref{l0big}), we re-define $l_0$ to be
$a^{(n-2)}(M,0)$.
\begin{prop}\label{propn}
 Assume that $l_0\leq a^{(n-2)}(M,0)-1$. After a holomorphic change of coordinates we can
 normalize the coefficients of $\{S_j\}$
  to further  satisfy one of the following
two  normalization properties:
\begin{enumerate}
  \item[ (I)]  $a_{j(n-1)}^{(l_0)}(z_1,\cdots,z_{n-2},0,\ov{z_1},\cdots,\ov{z_{n-2}},0)$
  is holomorphic in $z_1,\cdots,z_{n-2}$ \hbox{ for each}\ j, and
  there exits
  $ j_0\in [2,n-2]$\ such that $a^{(l_0)}_{j(n-1)}(z_1,\cdots,z_{n-2},0,\ov{z_1},\cdots,\ov{z_{n-2}},0)=0$
  \ \hbox{for}\  $1\leq j\leq j_0-1,$
  $a^{(l_0)}_{j_0(n-1)}(0,\cdots,0,z_{j_0},\cdots,z_{n-2},0,\cdots,0)=0$,
  but
      \\ $a^{(l_0)}_{j_0(n-1)}(z_1,\cdots,z_{n-2},0,\cdots,0)\not\equiv 0$.\\

  \item[(II)]   $a^{(l_0)}_{1(n-1)}(z_1,\cdots,z_{n-2},0,\ov{z_1},\cdots,\ov{z_{n-2}},0)$ is not a holomorphic polynomial \\ and $a^{(l_0)}_{1(n-1)}(z_1,\cdots,z_{n-2},0,\cdots,0)=0$.
\end{enumerate}
\end{prop}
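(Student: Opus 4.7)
The plan is to do a case analysis on whether any of the degree-$l_0$ polynomials
\begin{equation*}
P_j(z,\ov{z}) := a^{(l_0)}_{j(n-1)}(z_1,\ldots,z_{n-2},0,\ov{z_1},\ldots,\ov{z_{n-2}},0), \qquad 1\leq j\leq n-2,
\end{equation*}
contains a non-holomorphic monomial. First I would iterate the procedure in Lemma~\ref{firstn}, if necessary, at successively higher degrees up to and including $l_0$. Each iteration is a holomorphic change of coordinates of the type $z_{n-1}'=z_{n-1}-\int_0^{z_j}(\,\cdot\,)\,d\xi$ with integrand of the relevant degree $m$; since the effect on $a_{h(n-1)}$ of such a change is of degree $\geq m$, normalizations already achieved at strictly lower degree are preserved. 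After finitely many steps this yields the strengthened conclusion
\begin{equation*}
a^{(l_0)}_{j(n-1)}(0,\ldots,0,z_j,\ldots,z_{n-2},0,\ldots,0)=0 \quad \text{for every } 1\leq j\leq n-2,
\end{equation*}
and specializing to $j=1$ gives $a^{(l_0)}_{1(n-1)}(z,0,\ldots,0)=0$, which forces $P_1$ to have no purely holomorphic part.

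For Case (II), I would suppose some $P_j$ carries a non-holomorphic monomial, and let $j_1$ be the smallest such index. If $j_1=1$ then Case (II) follows at once from the above. If $j_1\geq 2$, I would simultaneously swap the coordinates $z_1\leftrightarrow z_{j_1}$ and the basis vectors $S_1\leftrightarrow S_{j_1}$. A direct computation using Lemma~\ref{tranrel} shows that this linear biholomorphism preserves the normalization $a_{jh}(0)=\delta_{jh}$ and merely permutes the integers $k_j$, so $l_0$ is unchanged and $\tilde P_1$ inherits the non-holomorphic content of $P_{j_1}$ after index relabeling. Re-running the iterated Lemma~\ref{firstn} in the new frame only performs holomorphic coordinate changes, which kill the holomorphic part of $\tilde P_1$ but cannot create or destroy non-holomorphic terms; the result is precisely Case (II).

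For Case (I), I would suppose instead that every $P_j$ is holomorphic in $z$, and set $j_0:=\min\{j:P_j\not\equiv 0\}$, which exists because $l_0=\min_j k_j$ is finite. For $j<j_0$ the vanishing $P_j\equiv 0$ is by definition of $j_0$. Moreover $j_0\neq 1$: otherwise $P_1$ would be a nonzero holomorphic polynomial, contradicting $a^{(l_0)}_{1(n-1)}(z,0,\ldots,0)=0$ obtained above; hence $j_0\in[2,n-2]$. The vanishing $a^{(l_0)}_{j_0(n-1)}(0,\ldots,0,z_{j_0},\ldots,z_{n-2},0,\ldots,0)=0$ is the iterated firstn conclusion at index $j_0$, and $a^{(l_0)}_{j_0(n-1)}(z,0,\ldots,0)=P_{j_0}\not\equiv 0$ holds because $P_{j_0}$ is holomorphic and nonzero.

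The main technical point I expect to handle carefully is that the iteration of Lemma~\ref{firstn} truly terminates with the required condition at the relevant degree $l_0$, and that the swap used in Case (II) interacts cleanly with this iteration (the swap can break the earlier diagonal normalization, so the iterated Lemma~\ref{firstn} must be rerun after swapping). Both of these reduce to bookkeeping, since every operation is either a coordinate permutation or a triangular-in-$z_{n-1}$ holomorphic change that preserves $a_{jh}(0)=\delta_{jh}$ and only modifies $a_{h(n-1)}$ in degrees at least the current normalization degree.
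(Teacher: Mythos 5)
Your proof is correct and follows the same route as the paper: normalize via iterated applications of Lemma \ref{firstn} at degree $l_0$, split into cases by whether any $a^{(l_0)}_{j(n-1)}|_{\ov{z_{n-1}}=z_{n-1}=0}$ is non-holomorphic, and in the non-holomorphic case swap $z_1 \leftrightarrow z_{j_1}$ and $S_1 \leftrightarrow S_{j_1}$ before re-normalizing. The paper's own proof is terser — it simply asserts that the Lemma \ref{firstn} normalization holds at degree $l_0$ and that the index swap plus a repeat of Lemma \ref{firstn} yields Case (II) — whereas you spell out the bookkeeping that makes these assertions safe: that the triangular changes $z_{n-1}' = z_{n-1} - \int(\cdot)d\xi$ only affect degrees $\geq$ the current one and hence preserve earlier normalizations, that the coordinate/frame double-swap preserves $a_{jh}(0)=\delta_{jh}$ and permutes the $k_j$ so $l_0$ is unchanged, and that the holomorphic re-normalization cannot create or destroy non-holomorphic monomials. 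These are exactly the points the paper leaves implicit, so your write-up is a faithful (slightly fuller) rendering of the same argument.
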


\begin{proof}
(I):   First, we assume that each
$a^{(l_0)}_{j(n-1)}(z_1,\cdots,z_{n-2},0,\ov{z_1},\cdots,\ov{z_{n-2}},0)$
is holomorphic, and each
  $a^{(l_0)}_{j(n-1)}$ satisfies the properties as in Lemma \ref{firstn}. Then
  $$a^{(l_0)}_{1(n-1)}(z_1,\cdots,z_{n-2},0,\cdots,0)=0.$$ By the definition of $l_0$, we can find the
   smallest $j_0\in [2,n-2]$ such that $$a^{(l_0)}_{j(n-1)}(z_1,\cdots,z_{n-2},0,\cdots,0)\equiv0$$ for all $1\leq j\leq  j_0-1$,
   but $a^{(l_0)}_{j_0(n-1)}(z_1,\cdots,z_{n-2},0,\cdots,0)\not\equiv0$. By Lemma \ref{firstn},
   this $j_0$ satisfies the property in
    part (I) of the proposition.

(II):  Next, assume that
$a^{(l_0)}_{j(n-1)}(z_1,\cdots,z_{n-2},0,\ov{z_1},\cdots,\ov{z_{n-2}},0)$
is not holomorphic for a certain $j\in [1,n-2]$. Switching $j$ with
the index $1$ and repeating the proof in Lemma \ref{firstn},
   we can make $a^{(l_0)}_{1(n-1)}(z_1,\cdots,z_{n-2},0,\cdots,0)=0$ and achieve the other normalization
   properties as in Lemma \ref{firstn}.  Notice that $l_0$ is not changed after this
   normalization procedure.  This completes the proof of the
   proposition.
\end{proof}

 Define the weight of $z_{j}$ and $\ov{z_{j}}$ for $1\leq j\leq n-2$  to be $1$.
 The weight of $z_{n-1}$ and $\ov{z_{n-1}}$ is defined to  be $l_0+1$ and  the weight of $w$ is defined to  be $m$
 that is the lowest weighted vanishing order of $\rho$ in the expansion of $\rho(z,0,\ov{z},0)$ at $0$  with respect to the
 weights  of $\{z_1,
 \cdots,z_{n-2},z_{n-1}\}$ just defined. In what follows, for a smooth function $A$,  we write $A^{[\sigma]}(z_1,\cdots,z_{n-1},\ov{z_1},\cdots,\ov{z_{n-1}})$
 for the weighted homogeneous part of weighted degree $\sigma$ with the weight system just defined in its Taylor expansion
 at $0$.
  Then we have  the following
\begin{prop}\label{propn2}
    In the case of Proposition \ref{propn} (II), we can further apply a holomorphic transformation of coordinates and
    change the basis $\{S_j\}$ if needed to make   the coefficients of
    $\{S_j\}$ in the expansion with respect to $\{L_j\}$
    satisfy
     one of the following two normalizations:
\begin{enumerate}
  \item[(1)]  $a^{(l_0)}_{1(n-1)}(z_1,0,\cdots,0,\ov{z_1},0,\cdots,0)\not \equiv 0$,\
  $a^{(l_0)}_{1(n-1)}(z_1,0,\cdots,0)=0$ \\($a^{(l_0)}_{1(n-1)}(z_1,\cdots,z_{n-2},0)=0$, \hbox{in fact
  }),
  $\rho^{[m]}(z_1,0,\cdots,0,z_{n-1},0,\ov{z_1},0,\cdots,0,\ov{z_{n-1}},0)$ is not identically zero
   (and contains no non-trivial holomorphic terms).

  \item[(2)] 
   For a certain $j\in [1,n-2]$, $a^{(l_0)}_{j(n-1)}(z_1,\cdots,z_{n-2},0,\ov{z_1},\cdots,\ov{z_{n-2}},0) $ is not holomorphic,\\
  $\sum_{k=1}^{n-2}z_ka^{(l_0)}_{k(n-1)}(z_1,\cdots,z_{n-2},0,\ov{z_1},\cdots,\ov{z_{n-2}},0)=0$,\\
   $\rho^{[m]}(z_1,\cdots,z_{n-1},0,\ov{z_1},\cdots,\ov{z_{n-1}},0)$ is not
   identically  zero  (and contains no non-trivial holomorphic terms).

\end{enumerate}
 \end{prop}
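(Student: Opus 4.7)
The plan is to split on the structure of the non-holomorphic part of $a^{(l_0)}_{1(n-1)}$ coming out of Proposition \ref{propn}(II). I would first check whether, after all admissible basis changes preserving the shape of \eqref{norm-001}, the restriction $a^{(l_0)}_{1(n-1)}(z_1,0,\ldots,0,\ov{z_1},0,\ldots,0)$ can be made not identically zero. If so, we are in case (1); otherwise every non-holomorphic monomial of $a^{(l_0)}_{1(n-1)}$ necessarily involves some $z_k$ or $\ov{z_k}$ with $k\geq 2$, and I would force normalization (2) by a further smooth basis change. In either case the final step is a holomorphic shift of $w$ that removes holomorphic terms from $\rho^{[m]}$ on a prescribed slice; the key enabling observation is that the $L_i$ are invariant under $(z',w') = (z, w + h(z))$ for any holomorphic $h$, so the coefficients $a_{jh}$, the index $l_0$, and the weight system are all preserved by such a change.

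For case (1), Lemma \ref{firstn} already yields $a^{(l_0)}_{1(n-1)}(z_1,\ldots,z_{n-2},0,\ldots,0) = 0$ along the holomorphic slice, so the coefficient part of normalization (1) is immediate. To strip the holomorphic terms of $\rho^{[m]}$ on the $(z_1, z_{n-1})$-slice, I would apply $(z',w') = (z, w + h(z_1, z_{n-1}))$ with $h$ a weighted-homogeneous holomorphic polynomial of weight $m$ in $(z_1, z_{n-1})$, chosen so that $2\operatorname{Re}(h)$ cancels the unwanted monomials. Nontriviality of the resulting $\rho^{[m]}$ on that slice would be verified by reading $a^{(l_0)}_{1(n-1)}|_{(z_1,\ov{z_1})} \not\equiv 0$ as contributing a nonzero leading weighted component to the Levi form along $S_1$, which forces a nonzero term of $\rho^{[m]}$ on the same slice.

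For case (2), I would construct a smooth basis change $S'_j = \sum_k c_{jk}(z,\ov z)S_k$ with $C=(c_{jk})$ invertible and $c_{jk}(0) = \delta_{jk}$, selected so that the new contact coefficients satisfy $\sum_{k=1}^{n-2} z_k \, a'^{(l_0)}_{k(n-1)}(z_1,\ldots,z_{n-2},0,\ov{z_1},\ldots,\ov{z_{n-2}},0) = 0$. Extracting the leading weighted part reduces this to a linear-algebraic condition on $C$; since $(a^{(l_0)}_{k(n-1)})_k$ is non-holomorphic but has trivial $(z_1,\ov{z_1})$-restriction in this sub-case, the required $C$ can be built by subtracting off the radial component of $(a^{(l_0)}_{k(n-1)})_k$ viewed through the Euler field $\sum z_k \p/\p z_k$. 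Afterwards, a holomorphic shift $w' = w + h(z)$ with $h$ weighted-homogeneous of weight $m$ in all of $z_1,\ldots,z_{n-1}$ strips the holomorphic part of $\rho^{[m]}(z_1,\ldots,z_{n-1},0,\ov{z_1},\ldots,\ov{z_{n-1}},0)$.

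The hard part will be case (2): arranging the Euler-vanishing condition while simultaneously preserving invertibility of $C$ at $0$, the weight-system bookkeeping (so that $l_0$ and $m$ are unchanged), and the non-holomorphicity of at least one $a'^{(l_0)}_{j(n-1)}$. I expect this to reduce to a solvability question for a linear system in a graded polynomial module over $\mathbb{C}[z,\ov z]$, whose resolution rests on the hypothesis of Proposition \ref{propn}(II) that $a^{(l_0)}_{1(n-1)}$ is genuinely non-holomorphic; the nontriviality of $\rho^{[m]}$ after these transformations must then be read off from the combination of the minimality of $m$ in its definition and the preserved non-holomorphicity of the contact coefficients.
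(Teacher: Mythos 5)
Your overall dichotomy is in the right spirit, but the mechanics of your proof do not align with what actually makes the proposition work, and two of your steps would fail as written.

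First, the central technical tool of the paper's proof—the generic linear shear $z_1' = z_1$, $z_j' = z_j - \alpha_j z_1$ ($2 \le j \le n-2$), $z_{n-1}'=z_{n-1}$, $w'=w$—is entirely absent from your argument. The paper uses this one family of coordinate changes, with generic $\alpha$, to simultaneously arrange both conclusions of normalization (1): that the restriction $a^{(l_0)}_{1(n-1)}(z_1,0,\ldots,0,\ov{z_1},0,\ldots,0)$ contains a non-trivial non-holomorphic term, \emph{and} that $\rho^{[m]}(z_1,0,\ldots,0,z_{n-1},0,\ov{z_1},0,\ldots,0,\ov{z_{n-1}},0)\not\equiv 0$. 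The latter comes from the minimality of $m$, which guarantees some coefficient $\rho^{[m]}_{(H\mu 0)(J\nu 0)}$ with $|J|+\nu>0$ is nonzero; a generic $\alpha$ then keeps the restricted polynomial from collapsing. Your substitute—deducing slice non-triviality of $\rho^{[m]}$ from reading $a^{(l_0)}_{1(n-1)}|_{(z_1,\ov{z_1})}\not\equiv 0$ into the Levi form along $S_1$—does not work: $a_{1(n-1)}$ and $\rho$ are logically independent objects at this stage, and there is no mechanism by which a nonzero contact coefficient on a slice forces $\rho^{[m]}$ to be nonzero on that slice. You need the generic-choice argument; without it you simply cannot conclude the $\rho^{[m]}$ part of (1).

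Second, your treatment of case (2) relies on a smooth basis change $S'_j = \sum_k c_{jk}(z,\ov z) S_k$ with $c_{jk}(0)=\delta_{jk}$. This changes $a_{j(n-1)}$ to $\sum_k c_{jk}\,a_{k(n-1)}$, but since $c_{jk} = \delta_{jk} + O(1)$, the leading homogeneous component of weight $l_0$ is unchanged: $a'^{(l_0)}_{j(n-1)} = a^{(l_0)}_{j(n-1)}$. Hence the Euler condition $\sum_k z_k a'^{(l_0)}_{k(n-1)}=0$ cannot be achieved this way. The paper instead applies a holomorphic shear of the form $z'_{n-1}=z_{n-1}+g(z_1,\ldots,z_{n-2})$ with $g$ homogeneous of degree $l_0+1$; via Lemma \ref{tranrel} this changes $a^{(l_0)}_{\lambda(n-1)}$ to $a^{(l_0)}_{\lambda(n-1)}+g_{z_\lambda}$, and the Euler equation $\sum_\lambda z_\lambda g_{z_\lambda}+\sum_\lambda z_\lambda a^{(l_0)}_{\lambda(n-1)}(z,0)=0$ has the explicit solution $g=-\tfrac{1}{l_0+1}\sum_\lambda z_\lambda a^{(l_0)}_{\lambda(n-1)}(z,0)$. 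Your "subtracting off the radial component through the Euler field" is the right intuition but is attached to the wrong transformation; a basis change cannot produce it. Finally, the holomorphic $w$-shifts you propose are both unnecessary and off-target: the absence of non-trivial holomorphic terms in $\rho^{[m]}$ is already built into the setup by the normalization $\chi(z,0,0)=O(a^{(n-2)}(M,0)+1)$ of \eqref{R} together with $m \le a^{(n-2)}(M,0)$, and the operation that must be controlled here is the linear shear in $z$, not a translation in $w$.
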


\begin{proof}

Consider the following change of coordinates:
 \begin{equation}\begin{split}
z_1'=z_1,\ z_j'=z_j-\alpha_jz_1,\ \text{for}\ 2\leq j\leq n-2,\
z_{n-1}'=z_{n-1},\ w'=w.
\end{split}\end{equation}
We first give a sufficient condition under which, for a generic
choice of  $\alpha_j$ with  $2\leq j\leq n-2$, we have
\begin{equation}\begin{split}\label{rhoan}
&\rho^{[m]}(z_1,0,\cdots,0,z_{n-1},0,\ov{z_1},0,\cdots,0,\ov{z_{n-1}},0)\not \equiv 0,\\
&a^{(l_0)}_{1(n-1)}(z_1,0,\cdots,0,\ov{z_1},0,\cdots,0)\
\text{contains no non-trivial holomorphic terms}.
\end{split}\end{equation}

Notice that
\begin{equation*}\label{norm-002}\begin{split}
&\rho^{[m]}(z_1,\cdots,z_{n-1},0,\ov{z_1},\cdots,\ov{z_{n-1}},0)\\
=&\rho^{[m]}(z_1',z_2'+\alpha_2z_1',\cdots,z_{n-2}'+\alpha_{n-2}z_{1}',z'_{n-1},0,\ov{z_1'},\ov{z_2'}+\ov{\alpha_2}\ov{z_1'},
\cdots,\ov{z_{n-2}'}+\ov{\alpha_{n-2}}\ov{z_{1}'},\ov{z'_{n-1}},0).
\end{split}\end{equation*}

 The coefficient of ${z'}_1^{t}{z'}_{n-1}^{\mu}\ov{{z'}_1}^{s}\ov{{z'}_{n-1}}^{\nu}$  with $t+s+\mu+\nu=m$ in its Taylor expansion is
$$
\sum_{\sum h_\lambda=t,\sum j_{\lambda}=s \atop{
H=(h_1,\cdots,h_{n-2}),J=(j_1,\cdots,j_{n-2})}}\rho^{[m]}_{(H\mu0)(J\nu0)}\alpha^{H}\ov{\alpha}^{J}.
$$
Here  $\alpha_1=1$, $\alpha=(\alpha_1,\cdots,\alpha_{n-2})$, and
$\rho^{[m]}_{(H\mu0)(J\nu0)}$ is the coefficient of
$$z_1^{h_1}\cdots z_{n-2}^{h_{n-2}}z_{n-1}^{\mu}\ov{z_1^{j_1}}\cdots
\ov{z_{n-2}^{j_{n-2}}}\ov{z_{n-1}^{\nu}}$$ in the Taylor expansion
of $\rho$ at $0$.

Notice that this term is $0$ for a generic choice of $\alpha$ if and
only if $\rho^{[m]}_{(H\mu0)(J\nu0)}=0$ for any pair $(H,J)$ with
$\sum h_\lambda=t,\ \sum j_{\lambda}=s$. By our choice of the weight
$m$, their exists a pair $(H\mu0)(J\nu0)$ with $|J|+\nu>0$ such that
$\rho^{[m]}_{(H\mu0)(J\nu0)}\neq 0$. Thus for a generic choice of
$\alpha_j's$, we have
$$\rho^{'[m]}(z_1',0,\cdots,0,z_{n-1}',0,\ov{z_1'},0,\cdots,0,\ov{z_{n-1}'},0)\not
\equiv 0.$$ Since $\rho^{[m]} $ contains no holomorphic terms, so is
$\rho'^{[m]}(z_1',0,\cdots,0,z_{n-1}',0,\ov{z_1'},0,\cdots,0,\ov{z_{n-1}'},0)$.
Hence for a generic choice of $\alpha$, the statement in the first
line of (\ref{rhoan}) holds.

Next notice that
\begin{equation}\begin{split}
\frac{\p}{\p z_1}=\frac{\p}{\p z_1'}-\sum_{\lambda=2}^{n-2}
\alpha_{\lambda}\frac{\p}{\p z_\lambda'},\ \frac{\p}{\p
z_j}=\frac{\p}{\p z_j'}\ \text{for}\ 2\leq j\leq n-1.
\end{split}\end{equation}
Thus by Lemma \ref{tranrel}, we know
\begin{equation}\begin{split}
L_1=L_1'-\sum_{\lambda=2}^{n-2} \alpha_{\lambda}L_\lambda',\
L_j=L_j'\  \text{for}\ 2\leq j\leq n-1.
\end{split}\end{equation}
Set
$S_1'=S_1+\sum_{\lambda=2}^{n-2} \alpha_{\lambda}S_\lambda$, $S_j'=S_j$. Then
\begin{equation}\begin{split}
S_1'=&\sum_{h=1}^{n-1} a_{1h}L_h+\sum_{\lambda=2}^{n-2} \alpha_{\lambda}\sum_{h=1}^{n-1}a_{\lambda h}L_h\\
=&\big(a_{11}+\sum_{\lambda=2}^{n-2}\alpha_\lambda a_{\lambda 1}\big)\big(L_1'-\sum_{\lambda=2}^{n-2}
\alpha_{\lambda}L_\lambda'\big)+\sum_{h=2}^{n-2}\big(a_{1h}+\sum_{\lambda=2}^{n-2} \alpha_{\lambda}a_{\lambda h}\big)L_h'\\
&+\big(a_{1(n-1)}+\sum_{\lambda=2}^{n-2}
\alpha_{\lambda}a_{\lambda(n-1)}\big)L_{n-1}':=\sum_{\lambda=1}^{n-1}a'_{1\lambda}L'_\lambda.
\end{split}\end{equation}
Hence
\begin{equation}\begin{split}
&a_{1(n-1)}'(z_1',0,\ov{z_1}',0)\\
=&a_{1(n-1)}(z_1',\alpha_2z_1',\cdots,\alpha_{n-2}z_1',0,
\ov{z_1'},\ov{\alpha_2z_1'},\cdots,\ov{\alpha_{n-2}z_1'},0)\\
&+\sum_{\lambda=2}^{n-2}
\alpha_{\lambda}a_{\lambda(n-1)}(z_1',\alpha_2z_1',\cdots,\alpha_{n-2}z_1',0,
\ov{z_1'},\ov{\alpha_2z_1'},\cdots,\ov{\alpha_{n-2}z_1'},0).
\end{split}\end{equation}
 Then the coefficient of ${z_1}^{'t}\ov{{z_1}^{'s}}$ with $t+s=l_0$ in $a_{1(n-1)}'(z_1',0,\ov{z_1}',0)$ is the following
\begin{equation}\begin{split}
&\sum_{\lambda=1}^{n-2}\sum_{|H|=t,|J|=s}
(a^{(l_0)}_{\lambda(n-1)})_{HJ}\alpha^{H+e_\lambda}\ov{\alpha}^J
=\sum_{\lambda=1}^{n-2}\sum_{|H|=t+1,|J|=s}
(a^{(l_0)}_{\lambda(n-1)})_{(H-e_{\lambda})J}\alpha^{H}\ov{\alpha}^J,
\end{split}\end{equation}
where $e_\lambda=(0,\cdots, 0,1,0\cdots,0)$ with  $1$  at the
$\lambda$-th position. This term is $0$ for a generic choice of
$\alpha$ if and only if $\sum_{\lambda=1}^{n-2}
(a^{(l_0)}_{\lambda(n-1)})_{(H-e_{\lambda})J}=0$. We next proceed in
two steps:

{\bf (1).} First, we suppose that there exists a pair $(H,J)$ with
$|J|\neq 0$ such that
$$\sum_{\lambda=1}^{n-2} (a^{(l_0)}_{\lambda(n-1)})_{(H-e_{\lambda})J}\not =0.$$ Then for a generic choice of $\alpha$,
 $a'^{(l_0)}_{1(n-1)}(z_1',0,\ov{z_1}',0)$ contains no non-trivial holomorphic terms.
Through the normalization procedure as in Lemma \ref{firstn}, we can
make $$a'^{(l_0)}_{1(n-1)}(z_1',\cdots,z_{n-2}',0)=0$$ and thus, in
particular,  $a'^{(l_0)}_{1(n-1)}(z_1',0,\cdots,0)=0$.  We point out
that this transformation preserves the statement in the first line
of (\ref{rhoan}). Then $a^{(l_0)}_{1(n-1)}$ and $\rho^{[m]}$ satisfy
the desired properties in (1) of Proposition \ref{propn2}. Next, we
can repeat the same argument in Lemma \ref{firstn} to normalize
$a_{j(n-1)}^{(l_0)}$ for $j\ge 2$ and thus obtain the normalization
for $a_{j(n-1)}^{(l_0)}$ with $j=2,\cdots, n-2$.

{\bf (2).} We now suppose
\begin{equation}\begin{split}\label{a100}
\sum_{\lambda=1}^{n-2}
(a^{(l_0)}_{\lambda(n-1)})_{(H-e_{\lambda})J}=0\ \text{for any}\
|H|+|J|=l_0+1,\ |J|\neq 0.
\end{split}\end{equation}
We will show that by a suitable change of coordinates of the form
$z_j'=z_j$, $z_{n-1}'=z_{n-1}+ g(z_1,\cdots,z_{n-2})$, $w'=w$, we
can make
\begin{equation}\begin{split}\label{a101}
\sum_{\lambda=1}^{n-2}
(a^{(l_0)}_{\lambda(n-1)})_{(H-e_{\lambda})0}=0\ \text{for any}\
|H|=l_0+1.
\end{split}\end{equation}
Here $g(z_1,\cdots,z_{n-2})$ is a homogeneous holomorphic polynomial
of degree $l_0+1$.

 In fact, under this transformation, we have
 \begin{equation*}\begin{split}
\frac{\p }{\p z_j}=\frac{\p }{\p z_j'}+g_{z_j}\frac{\p }{\p z_{n-1}'},\ \frac{\p }{\p z_{n-1}}=\frac{\p }{\p z_{n-1}'}.
\end{split}\end{equation*}
 Thus
  \begin{equation*}\begin{split}
  S_j=&\sum_{j=1}^{n-1}a_{jh}L_h     =\sum_{j=1}^{n-2}a_{jh}(L_h'+g_{z_h}L_{n-1}')+a_{j(n-1)}L_{n-1}'\\
     =&\sum_{j=1}^{n-2}a_{jh}L_h'+(a_{j(n-1)}+\sum_{j=1}^{n-2}a_{jh}g_{z_h})L_{n-1}'.
\end{split}\end{equation*}
Hence
\begin{equation}\begin{split}\label{ag}
a'^{(l_0)}_{\lambda(n-1)}=a^{(l_0)}_{\lambda(n-1)}+g_{z_\lambda}.
\end{split}\end{equation}
Thus $\sum_{\lambda=1}^{n-2}
(a'^{(l_0)}_{\lambda(n-1)})_{(H-e_{\lambda})0}=0$ for any $H$ with
$|H|=l_0+1$, which is equivalent to $\sum_{\lambda=1}^{n-2}z_\lambda
a'^{(l_0)}_{\lambda(n-1)}(z_1,\cdots,z_{n-2},0)=0$,  if and only if
 $$
 \sum_{\lambda=1}^{n-2}  z_\lambda g_{z_\lambda}+\sum_{\lambda=1}^{n-2}
 z_\lambda a^{(l_0)}_{\lambda(n-1)}(z_1,\cdots,z_{n-2},0)=0.
 $$
 This is the well-known Euler equation and can be solved as follows:

Notice that if we write $g=\sum_{|J|=l_0+1} \Gamma_J z^J$, then
\begin{equation*}\begin{split}
\sum_{\lambda=1}^{n-2} z_\lambda
g_{z_\lambda}=\sum_{\lambda=1}^{n-2}\sum_{|J|=l_0+1} j_\lambda
\Gamma_J z^J=(l_0+1) g.
\end{split}\end{equation*}
 Hence $g$ can be uniquely solved as
  $g=-\frac{1}{l_0+1}\sum_{\lambda=1}^{n-2} z_\lambda a^{(l_0)}_{\lambda(n-1)}(z_1,\cdots,z_{n-2},0)$. Thus we get the desired normalization property
    in (\ref{a101}). Notice that by (\ref{ag}),
    $(a^{(l_0)}_{\lambda(n-1)})_{(H-e_{\lambda})J}$ with
    $|H|+|J|=l_0+1$, $|J|\neq 0$ is not changed under this
    transformation. Hence (\ref{a100}) still holds to be true.

Notice that (\ref{a100}) and (\ref{a101})  are equivalent to the
normalization property in $(2)$ of Proposition \ref{propn2}. In
fact,
\begin{equation*}\begin{split}
&\sum_{j=1}^{n-2}z_ja^{(l_0)}_{j(n-1)}(z_1,\cdots,z_{n-2},0,\ov{z_1},\cdots,\ov{z_{n-2}},0)\\
=&\sum_{|H|+|J|=l_0+1}\sum_{j=1}^{n-2}
(a^{(l_0)}_{j(n-1)})_{(H-e_{j})J}z^{H}\ov{z}^J=0.
\end{split}\end{equation*}

This completes the proof of Proposition \ref{propn2}.
\end{proof}

We  summarize what we did in this section in the following:
\begin{cor}\label{NN} Keep the same   notations and definitions we have made so far.
After a holomorphic change of
  coordinates and after choosing  a suitable basis for $B$, we have one of the  following three normalizations
   for $\{a_{j(n-1)}^{(l_0)},\rho^{[m]}\}_{j=1}^{n-2}$ under the assumption that  $l_0\leq a^{(n-2)}(M,0)-1$:
\begin{enumerate}
\item[(I)]   $a_{j(n-1)}^{(l_0)}(z_1,\cdots,z_{n-2},0,\ov{z_1},\cdots,\ov{z_{n-2}},0)$
  is holomorphic in $z_1,\cdots,z_{n-2}$ \hbox{ for each}\ j, and
  there exits
  $ j_0\in [2,n-2]$\ such that $a^{(l_0)}_{j(n-1)}(z_1,\cdots,z_{n-2},0,\ov{z_1},\cdots,\ov{z_{n-2}},0)=0$
  \ \hbox{for}\  $1\leq j\leq j_0-1,$
  $a^{(l_0)}_{j_0(n-1)}(0,\cdots,0,z_{j_0},\cdots,z_{n-2},0,\cdots,0)=0$,
  but
      \\ $a^{(l_0)}_{j_0(n-1)}(z_1,\cdots,z_{n-2},0,\cdots,0)\not\equiv
      0$.

\item[(II)]  $a^{(l_0)}_{1(n-1)}(z_1,0,\cdots,0,\ov{z_1},0,\cdots,0)\not \equiv 0$,\
  $a^{(l_0)}_{1(n-1)}(z_1,0,\cdots,0)=0$,\\
  $\rho^{[m]}(z_1,0,\cdots,0,z_{n-1},0,\ov{z_1},0,\cdots,0,\ov{z_{n-1}},0)$ is not identically zero (
  and contains no non-trivial holomorphic terms).

  \item[(III)] For a certain $j\in [1,n-2]$, $a^{(l_0)}_{j(n-1)}(z_1,\cdots,z_{n-2},0,\ov{z_1},\cdots,
  \ov{z_{n-2}},0) $ is not holomorphic,
  $\sum_{j=1}^{n-2}z_ja^{(l_0)}_{j(n-1)}(z_1,\cdots,z_{n-2},0,\ov{z_1},\cdots,\ov{z_{n-2}},0)=0$,
  and
   $$\rho^{[m]}(z_1,\cdots,z_{n-1},0,\ov{z_1},\cdots,\ov{z_{n-1}},0)$$ is not
   identically  zero  containing no non-trivial holomorphic  terms.

\end{enumerate}

\end{cor}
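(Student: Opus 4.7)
The plan is to obtain Corollary \ref{NN} as a direct synthesis of Lemma \ref{firstn}, Proposition \ref{propn}, and Proposition \ref{propn2}, which together already do all the work; the corollary merely organizes the outcomes into three mutually exclusive normal forms. Under the standing hypothesis $l_0\le a^{(n-2)}(M,0)-1$, I first invoke Lemma \ref{firstn} to normalize the $l^*_0$-jet of the $a_{j(n-1)}$ along the diagonal axes, after which Proposition \ref{propn} becomes applicable and splits the analysis into two cases.

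In the first case of Proposition \ref{propn} each leading part $a^{(l_0)}_{j(n-1)}(z_1,\ldots,z_{n-2},0,\ov{z_1},\ldots,\ov{z_{n-2}},0)$ is holomorphic in $z_1,\ldots,z_{n-2}$, and a distinguished index $j_0\in[2,n-2]$ is produced with the prescribed vanishing / non-vanishing behaviour. This is verbatim conclusion (I) of the corollary, so no further step is required. In the second case of Proposition \ref{propn}, the leading part $a^{(l_0)}_{1(n-1)}$ fails to be holomorphic, and I feed that situation into Proposition \ref{propn2}, applied on top of the Lemma \ref{firstn} normalization already in force. Proposition \ref{propn2} produces either conclusion (1) via a generic linear shear $z_j'=z_j-\alpha_j z_1$, or conclusion (2) via the Euler-type correction $z_{n-1}'=z_{n-1}+g(z_1,\ldots,z_{n-2})$ where $g$ solves the Euler equation uniquely at weighted degree $l_0+1$. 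These two alternatives are precisely cases (II) and (III) of the corollary.

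The only point that demands care, and which I would write out explicitly, is that each coordinate change in the chain preserves the normalizations achieved by its predecessors. Concretely: the generic linear shear in Proposition \ref{propn2} preserves the non-triviality of $\rho^{[m]}$ restricted to the $(z_1,z_{n-1})$-axes (the first line of (\ref{rhoan})), because genericity eliminates only finitely many linear conditions on $\alpha$; the Euler shear $z_{n-1}\mapsto z_{n-1}+g$ leaves the mixed coefficients $(a^{(l_0)}_{\lambda(n-1)})_{(H-e_\lambda)J}$ with $|J|\neq 0$ untouched by (\ref{ag}), so the vanishing relation (\ref{a100}) persists; and a final round of Lemma \ref{firstn}-type substitutions $z_{n-1}'=z_{n-1}-\int_0^{z_j}\cdots d\xi$ used to make $a^{(l_0)}_{j(n-1)}(z_1,\ldots,z_{n-2},0,\ldots,0)=0$ for $j\ge 2$ disturbs neither the non-holomorphicity of $a^{(l_0)}_{1(n-1)}$ along the $z_1$-axis nor the Euler identity $\sum_j z_j a^{(l_0)}_{j(n-1)}=0$. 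Crucially, all four types of coordinate changes used above fix the weights of $z_1,\ldots,z_{n-2},z_{n-1},w$, so the invariants $l_0$ and $m$ and the weighted-homogeneous piece $\rho^{[m]}$ transform tensorially and nothing slips.

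The main (and only) obstacle is precisely this bookkeeping of compatibilities across the three coordinate changes; once those are verified the trichotomy (I)--(III) is immediate from the trichotomy obtained in Propositions \ref{propn} and \ref{propn2}. No new estimate, no new calculation, and no use of pseudoconvexity enters at this stage — the corollary is a packaging statement whose content is carried entirely by the preceding lemmas.
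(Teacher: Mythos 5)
Your proposal is correct and follows exactly the paper's own treatment: the paper states Corollary \ref{NN} with no separate argument, explicitly prefacing it with ``We summarize what we did in this section in the following,'' so the corollary is indeed just a packaging of Lemma \ref{firstn}, Proposition \ref{propn}, and Proposition \ref{propn2}, with case (I) coming from Proposition \ref{propn}(I) and cases (II)--(III) coming from Proposition \ref{propn2}(1)--(2). Your extra paragraph on checking that each successive coordinate change preserves the earlier normalizations is a worthwhile amplification, but it is not a different route — the compatibility checks you list are precisely the ones carried out inside the proofs of Lemma \ref{firstn} and Proposition \ref{propn2} (e.g.\ the remark after (\ref{ag}) that (\ref{a100}) persists under the Euler shear, and the remark that the shear preserving the non-triviality of $\rho^{[m]}$).
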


\section{Proof of Theorem \ref{mainthm}}

 In this section, we  present  a proof of Theorem \ref{mainthm},
 assuming    Lemma \ref{lem4-tech} and Theorem \ref{main-tech} whose
 proofs are long and will be given in $\S 5$ and $\S 6$.

\begin{proof}[Proof of the equality: $t^{(n-2)}(M,p)=a^{(n-2)}(M,p)$]
We keep the notations set up in $\S 2$ and $\S 3$.  Assume that $M$
is defined as in (\ref{rho}) and (\ref{R}). As we mentioned there,
we assume that $ a^{(n-2)}(M,p=0)<\infty$. Supposing that
$t^{(n-2)}(M,0)> a^{(n-2)}(M,0)$, we will then seek  a
contradiction.

Let $B$ be an $(n-2)$-dimensional smooth vector subbundle of
$T^{1,0}M$ such that $t^{(n-2)}(M,0)=t^{(n-2)}(B,0)$. By the
assumption that $t^{(n-2)}(M,0)>a^{(n-2)}(M,0)$,  for  any $l\leq
a^{(n-2)}(M,0)$ we have
\begin{equation}\label{lieb}
\langle F,\p \rho \rangle(0)=0\ \text{for any }\
F=[F_{l},F_{l-1},\cdots[F_2,F_1]\cdots]\ \text{with}\
F_1,\cdots,F_{l}\in \mathcal{M}_1(B).
\end{equation}
We also assume that  the local sections of $B$ are generated by
$S_1,\cdots,S_{n-2}$.

 Recall that the weight of $z_j$ for $1\leq j\leq
n-2$ and their conjugates is  $1$. Define  the weight of $z_{n-1}$
and its conjugate  to be $k=l_0+1$. Denote the weight of $w$ to be
$m$, which is the lowest weighted vanishing order of
$\rho(z,\ov{z},0)$ with respect to the weights just given. We also
define
\begin{equation}
\begin{split}
&\text{wt}(\frac{\p }{\p z_j})=\text{wt}(\frac{\p }{\p \ov{z_j}})=-1 \ \text{for}\ 1\leq j\leq n-2,\\
& \text{wt}(\frac{\p }{\p z_{n-1}})=\text{wt}(\frac{\p }{\p \ov{z_{n-1}}})=-k,\ \text{wt}(\frac{\p }{\p w})=\text{wt}(\frac{\p }{\p \ov{w}})=-m.
\end{split}
\end{equation}

By the definition of $a^{(n-2)}(M,0)$, when restricted to the
$(n-2)$-manifold $\{(z,w):\ z_{n-1}=w=0\}$, the vanishing order of
$\rho$ is bounded by $a^{(n-2)}(M,0)$. Thus $m\leq a^{(n-2)}(M,0)$.
When $k\leq a^{(n-2)}(M,0)$, we assume  that $S_j$ and $\rho$ are
normalized as in Corollary \ref{NN}.

Write
$$
S_j^0=\frac{\p}{\p z_j}+a_{j(n-1)}^{(k-1)} \frac{\p}{\p
z_{n-1}}+a_{jn}^{[m-1]} \frac{\p}{\p w}.
$$
Then $S_j^0$ is the sum of  terms in $S_j$ of weighted degree $-1$.

Now, let $\mathcal{M}^0$ be the $C^\infty(M^0)$-module spanned by
$S_j^0$ and $\ov{S_j^0}$ for all $1\leq j \leq n-2$, where
$M^0=\{(z,w):\ \rho^{[m]}=-2\text{Re}w+\chi^{[m]}(z,\ov{z},0)=0\}$
and $\mathcal{M}^0_l$ be the $C^\infty(M^0)$ module formed by taking
the Lie bracket of length $\leq l$ of sections from $\mathcal{M}^0$
for $l=2,\cdots$. $\mathcal{M}^0_\infty=\cup_{l\in
\mathbb{N}}\mathcal{M}^0_l$. Now we need the following two lemmas.

\begin{lem}\label{klm}
  It holds that $k<m$.
\end{lem}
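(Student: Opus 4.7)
The plan is a contradiction argument: suppose $m\le k=l_0+1$; I will exhibit an iterated bracket of length $\le m$ among $\{S_j,\ov S_j\}$ with non-zero $\p/\p w$-coefficient at $0$, giving $\langle F,\p\rho\rangle(0)\ne 0$ and contradicting the standing hypothesis $t^{(n-2)}(M,0)>a^{(n-2)}(M,0)\ge m$ (the bound $m\le a^{(n-2)}(M,0)$ follows already from the $(n-2)$-submanifold $\{z_{n-1}=w=0\}$, whose order of contact with $M$ equals $m$ restricted to that slice).

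The first step pins down $\chi^{[m]}$. By the normalization (\ref{R}), purely holomorphic monomials of $\chi$ have ordinary, hence weighted, degree $\ge a^{(n-2)}(M,0)+1>m$; by reality the same holds for purely antiholomorphic monomials. Any monomial of $\chi^{[m]}$ containing a factor $z_{n-1}$ or $\ov{z_{n-1}}$ has weighted degree $\ge k$, with equality only for the linear terms $z_{n-1}$ and $\ov{z_{n-1}}$, which are excluded by $\chi=O(|z|^2)$. So under $m\le k$, $\chi^{[m]}$ is a non-trivial real weighted-homogeneous polynomial of degree $m$ purely in $z'=(z_1,\dots,z_{n-2})$ and $\ov{z'}$, and every monomial is genuinely mixed.

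The second step uses the weight filtration to extract derivatives. Since $\p\rho/\p w|_0=-1$ and $a_{jh}(0)=\delta_{jh}$, the $\p/\p w$-coefficient of $S_j$ has leading weight-$(m-1)$ part $a_{jn}^{[m-1]}=\p\chi^{[m]}/\p z_j$; for a length-$\ell$ commutator $F$ of $\{S_j,\ov S_j\}$ with weighted leading part $F^0$ built from $\{S_j^0,\ov S_j^0\}$, one has $\langle F,\p\rho\rangle(0)=\langle F^0,\p\rho^{[m]}\rangle(0)=-(F^0)^w(0)$. Because $\chi^{[m]}$ is independent of $z_{n-1}$, the $a_{j(n-1)}^{(k-1)}\p/\p z_{n-1}$ pieces of $S_j^0$ do not interact with the $\p/\p w$-direction at leading weight; an inductive bracket computation then shows that a nested bracket $[Z_\ell,[Z_{\ell-1},[\dots,[Z_2,Z_1]\dots]]]$ with $Z_i\in\{S_{j_i}^0,\ov S_{l_i}^0\}$ has $\p/\p w$-coefficient at $0$ equal, up to a non-zero sign, to the mixed partial derivative of $\chi^{[m]}$ at $0$ in the corresponding $z_{j_i}$'s and $\ov{z_{l_i}}$'s.

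To finish, by Step 1 I can choose a non-zero monomial $c_{I,J}\,z'^{I}\ov{z'}^{J}$ in $\chi^{[m]}$ with $|I|+|J|=m$ and $|I|,|J|\ge 1$; the corresponding mixed partial at $0$ equals $I!\,J!\,c_{I,J}\ne 0$. The matching nested bracket $F^0$ of $|I|$ fields $S^0$ and $|J|$ fields $\ov S^0$ then has non-zero $\p/\p w$-coefficient at $0$ by Step 2, so the lifted bracket $F$ of the $S_j,\ov S_j$'s satisfies $\langle F,\p\rho\rangle(0)\ne 0$, contradicting the vanishing forced by $t^{(n-2)}(M,0)>a^{(n-2)}(M,0)\ge m$. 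The main obstacle is the bookkeeping in Step 2: verifying that the many cross-terms produced by the $a_{j(n-1)}^{(k-1)}\p/\p z_{n-1}$ components of $S_j^0$ contribute only to the $\p/\p z_{n-1}$ and $\p/\p\ov{z_{n-1}}$ directions of the iterated bracket. The decisive input making this book-keeping work is the independence of $\chi^{[m]}$ from $z_{n-1}$, which cleanly decouples those directions from the $\p/\p w$-direction at leading weight.
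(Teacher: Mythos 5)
Your argument is correct, but it follows a genuinely different route from the paper's. Both proofs hinge on the same key observation: when $m\le k$, the weighted-leading part $\chi^{[m]}$ depends only on $z'=(z_1,\dots,z_{n-2})$ and $\ov{z'}$, and by the normalization (\ref{R}) (plus reality of $\chi$) it consists entirely of mixed monomials. The paper exploits this by forming the modified fields $\w{S_j^0}=S_j^0-a_{j(n-1)}^{(k-1)}L_{n-1}^0$, noting they are tangent to $M^0$ regarded as a hypersurface in $\mathbb{C}^{n-1}$ (with the $z_{n-1}$-direction discarded), and then concluding $t^{(n-2)}(M^0,0)>m=a^{(n-2)}(M^0,0)$, contradicting the Bloom--Graham theorem on equality of the top ($s=n-1$) types. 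In other words, the paper invokes Bloom--Graham as a black box once the $z_{n-1}$-direction is decoupled. You instead carry out the bracket computation directly: you verify that $a_{jn}^{[m-1]}=\chi^{[m]}_{z_j}$, that nested brackets of $\{S_j^0,\ov{S_j^0}\}$ have no $\partial/\partial z'$- or $\partial/\partial\ov{z'}$-components beyond length one, and hence that the $\partial/\partial w$-coefficient of a length-$m$ nested bracket at $0$ is a mixed partial $\partial_{z'}^I\partial_{\ov{z'}}^J\chi^{[m]}(0)$; choosing a mixed monomial $c_{IJ}z'^I\ov{z'}^J$ of $\chi^{[m]}$ produces the non-vanishing bracket. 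This effectively re-proves the portion of Bloom--Graham that the paper cites, so your route is more self-contained but also more labor-intensive; the paper's route is slicker once one is willing to rely on the known $(n-1)$-type equality. One small note: your one-line justification of Step 2 (``the $a_{j(n-1)}^{(k-1)}\partial/\partial z_{n-1}$ pieces do not interact with $\partial/\partial w$'') really does hold, but the clean way to see it is to note first that nested brackets of length $\geq 2$ have vanishing $\partial/\partial z'$- and $\partial/\partial\ov{z'}$-components (the $S_j^0$ have constant coefficients in those directions), and that $\chi^{[m]}$ is independent of $z_{n-1},\ov{z_{n-1}},w,\ov{w}$; together these guarantee that only pure $\partial_{z'},\partial_{\ov{z'}}$ derivatives of $\chi^{[m]}$ appear in the $\partial/\partial w$-component of the iterated bracket.
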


\begin{proof}
  Suppose that $k\geq m$. Then the weight of $z_{n-1}$ is no less than $m$. Hence $\chi^{[m]}(z,\ov{z},0)$ is independent of $z_{n-1}$.
  Write
  $$
  \w{S_j^0}=S_j^0-a_{j(n-1)}^{(k-1)}L_{n-1}^0:=\frac{\p}{\p z_{j}}+\w{a_{jn}}\frac{\p}{\p {w}}.
  $$
  Since $S_j^0$ is tangent to $M^0$, whose defining function is independent of $z_{n-1}$, we see that
   $\w{S_j^0}$ is tangent to $M^0$ and $\w{a_{jn}}=\frac{\p \chi^{[m]}}{\p z_j}$. Hence
     $\w{a_{jn}}$ is independent of $z_{n-1}$.

     Regarding $M^0$ as  a real hypersurface in ${\mathbb C}^{n-1}$.
  Let $\w{\mathcal{M}}^0$ be the $C^\infty(M^0)$ module spanned by $\w{S_j^0}$ and $\ov{\w{S_j^0}}$
   for all $1\leq j \leq n-2$.
  Define
  $Q: {\mathcal{M}}^0\rightarrow \w{\mathcal{M}}^0$ by sending $\sum_{j=1}^{n-1} d_jL_j^0\in {\mathcal{M}}^0$ to
  $\sum_{j=1}^{n-2} d_jL_j^0\in \w{\mathcal{M}}^0$. Then by (\ref{lieb}), for any $Z_j^0\in \w{\mathcal{M}}^0$, there
   exists $Y_j^0\in {\mathcal{M}}^0$ with $Q(Y_j^0)=Z_j^0$ such that
  $$
  \langle [Z_j^0,[Z^0_{j_1},\cdots,[Z^0_2,Z^0_1]\cdots],\p \rho\rangle(0)=\langle [Y_j^0,[Y^0_{j-1},\cdots,[Y^0_2,Y^0_1]\cdots],\p \rho\rangle(0)=0\ \text{for}\ j\leq m.
  $$ (Indeed, we can simply take $Y_j^0$ to be $Z_j^0$,
   but regard it as a CR vector field of $M^0$ as a real hypersurface in ${\mathbb C}^n$.) Hence we have
  $t^{((n-1)-1)}(M^0,0)>m$. However, by our construction, $a^{((n-1)-1)}(M^0,0)=m$.
   This contradicts a result of Bloom-Graham for the $(n-1)$ types
   in \cite{BG1}, which says that $t^{((n-1)-1)}(M^0,0)=a^{((n-1)-1)}(M^0,0)$ for $M^0\subset {\mathbb C}^{n-1}$.
\end{proof}

\begin{lem}\label{rhom0}
  For any $ Y^0\in \mathcal{M}^0_l$, we have $\langle Y^0,\p \rho^{[m]} \rangle(0)=0$.
\end{lem}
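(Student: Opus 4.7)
The plan is to exploit the weighted grading from $\S 3$ to transfer the hypothesis (\ref{lieb}) from $(\rho,S_j,\ov{S_j})$ to their weighted leading terms $(\rho^{[m]},S_j^0,\ov{S_j^0})$. Using the Leibniz identity $[fX,Y]=f[X,Y]-Y(f)X$, any element of $\mathcal{M}^0_l$ can be written as a $C^\infty(M^0)$-linear combination of iterated brackets of the pure generators $S_j^0,\ov{S_j^0}$; since coefficients evaluated at $0$ are just scalars, it suffices to prove $\langle F^0,\partial\rho^{[m]}\rangle(0)=0$ for any single iterated bracket $F^0=[X_l^0,[\cdots,[X_2^0,X_1^0]\cdots]]$ with each $X_j^0\in\{S_k^0,\ov{S_k^0}\}$.

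By construction $S_j=S_j^0+\tilde S_j$, where $S_j^0$ is weighted homogeneous of weight $-1$ (as explicitly displayed before Lemma \ref{klm}) and $\tilde S_j$ has weight $>-1$. Because the commutator is compatible with the weighted grading, a short induction on bracket length shows that the corresponding iterated commutator of the full vector fields $F=[X_l,[\cdots,[X_2,X_1]\cdots]]$ decomposes as $F=F^0+\tilde F$, where $F^0$ is weighted homogeneous of weight $-l$ and $\tilde F$ has weight $>-l$. Likewise $\rho=\rho^{[m]}+\tilde\rho$ with $\tilde\rho$ of weight $>m$ by the definition of $m$. Consequently the function $\langle F,\partial\rho\rangle$ admits a weighted expansion whose lowest piece is exactly $\langle F^0,\partial\rho^{[m]}\rangle$, weighted homogeneous of weight $m-l$.

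The conclusion then splits into three cases. If $l>m$, the weighted homogeneous polynomial $\langle F^0,\partial\rho^{[m]}\rangle$ has negative weight and is identically zero. If $l<m$, it has positive weight and thus vanishes at $0$. The only case needing (\ref{lieb}) is $l=m$: then $\langle F^0,\partial\rho^{[m]}\rangle$ is a constant (weight $0$), while every other contribution to $\langle F,\partial\rho\rangle$ is a polynomial of strictly positive weight and so vanishes at the origin. Hence $\langle F,\partial\rho\rangle(0)=\langle F^0,\partial\rho^{[m]}\rangle(0)$. Since $m\le a^{(n-2)}(M,0)$, hypothesis (\ref{lieb}) applies and forces this common value to be $0$. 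The argument is essentially weight bookkeeping; the only delicate point is verifying that the leading weighted part of an iterated commutator equals the iterated commutator of the leading parts, but this follows routinely from the compatibility of the bracket with the grading, so no serious obstacle is anticipated.
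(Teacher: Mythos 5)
Your proof is correct and follows essentially the same route as the paper: both are weight-bookkeeping arguments that split into the cases $l>m$, $l<m$, $l=m$, using compatibility of the Lie bracket with the weighted grading, and then invoking hypothesis (\ref{lieb}) only for $l=m$ (legitimate because $m\le a^{(n-2)}(M,0)$). The only cosmetic difference is that the paper tracks the $\frac{\p}{\p w}$-component of $Y^0$ modulo $(\frac{\p}{\p z},\frac{\p}{\p\ov z},\frac{\p}{\p\ov w})$ while you work directly with the pairing $\langle F^0,\p\rho^{[m]}\rangle$; your explicit Leibniz reduction to iterated brackets of the pure generators $S_j^0,\ov{S_j^0}$ is actually a point of extra care, since the paper's assertion that each $Z_j^0$ is weighted homogeneous of degree $-1$ really presupposes that the $X_j^0$ are the homogeneous generators rather than arbitrary $C^\infty(M^0)$-combinations.
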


 \begin{proof} Assume that $Y^0=[X_l^0,\cdots,[X_2^0,X_1^0]\cdots]$ with $X_j^0\in \mathcal{M}^0$. Write
$$
X_j^0=Z_j^0+B_j\frac{\p}{\p w}+C_j\frac{\p}{\p \ov{w}}\ \text{with}\
Z_j^0=\sum_{k=1}^{n-1}(b_{jk}\frac{\p}{\p z_k}+c_{jk}\frac{\p}{\p
\ov{z_k}}).
$$
Here $Z_j^0$ is weighted homogeneous of degree $-1$ and
wt$(B_j)=\text{wt}(C_j)=m-1$. A direct computation shows
$$
[X_2^0,X_1^0]=(Z_2^0(B_1)-Z_1^0(B_2))\frac{\p }{\p w}\ \text{mod}\ (\frac{\p}{\p z},\frac{\p}{\p \ov{z}},\frac{\p}{\p \ov{w}})
$$
and by an induction,
$$
Y^0=C_l^0\frac{\p }{\p w}\ \text{mod}\ (\frac{\p}{\p z},\frac{\p}{\p
\ov{z}},\frac{\p}{\p \ov{w}})
$$
with  $C_l^0$ a weighted homogeneous polynomial of weighted degree
equal to $-l+m$. Hence $Y^0\equiv 0$ when $l>m$ and $Y^0|_0=0$ when
$l<m$ mod $(\frac{\p}{\p z},\frac{\p}{\p \ov{z}},\frac{\p}{\p
\ov{w}})$.

When $l=m$, Suppose that $Z_j\in \mathcal{M}_1$ such that
$(Z_j)^0=X_j^0$. Then $[Z_j,Z_k]^{0}=[X_j^0,X_k^0]$. Hence if $Z\in
\mathcal{M}_l$ with $l=m$ such that $(Z)^0=Y^0$, then
 $Z=C_l^0Y^0+D_l\frac{\p}{\p w}$  mod $(\frac{\p}{\p z},\frac{\p}{\p \ov{z}},\frac{\p}{\p \ov{w}})$ with
$\text{wt}(D_l)>\text{wt}(C_l^0)$. From (\ref{lieb}), $Z|_0=0$ mod
$(\frac{\p}{\p z},\frac{\p}{\p \ov{z}},\frac{\p}{\p \ov{w}})$. Thus
we obtain $C_m^0\equiv 0$. Hence $\langle Y^0,\p \rho^{[m]}
\rangle(0)=0$ for all $l\in \mathbb{N}$.
\end{proof}
Then we  have $\frac{\p }{\p v}|_0\not\in \mathcal{M}^0_\infty$. By
the Nagano theorem (see [BER], for instance), $\mathcal{M}^0_\infty$
gives a unique real analytic integral submanifold $N^0$ with $0\in
N^0\subset M^0=\{-2\text{Re}w+\chi^{[m]}(z,\ov{z},0)=0\}$. Moreover,
dim$_{\mathbb{R}}N^0=\text{dim}_{\mathbb{R}}\text{Re}\mathcal{M}^0_\infty|_{N^0}$.
Since $\frac{\p}{\p v}|_0\not\in T_0N^0$, $N^0$ is contained in the
graph of $v=f_1(z,\ov{z},u)$ for a certain real analytic function
$f_1$ near $0$. Since $u=\frac{1}{2}\chi^{[m]}(z,\ov{z},0)$, we
conclude that $N^0$ is contained in the graph of
$$
w=f(z,\ov{z})=\frac{1}{2}\chi^{[m]}(z,\ov{z},0)+if_1\big(z,\ov{z},\frac{1}{2}\chi^{[m]}(z,\ov{z},0)\big).
$$

We mention that from the pseudoconvexity of $M$, we immediately
conclude the pseudoconvexity of $M^0$, which is equivalent to the
plurisubharmonicity of $\hbox{Re}(f)=\chi^{[m]}(z,\ov{z},0)$.

\begin{lem}
  The real dimension of $N^0$ is either $2n-3$ or $2n-2$.
\end{lem}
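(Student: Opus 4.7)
My plan is to sandwich $\dim_{\mathbb R} N^0$ between $2n-4$ and $2n-2$, and then to rule out the lower extreme. Since $\mathcal{M}^0_\infty$ is closed under complex conjugation, $\mathcal{M}^0_\infty|_0$ is a conjugation-invariant complex subspace of $\mathbb{C}T_0 M^0$, and the Nagano integral manifold satisfies $\dim_{\mathbb R} T_0 N^0 = \dim_{\mathbb C}\mathcal{M}^0_\infty|_0$. The generators $S_j^0,\ov{S_j^0}$ for $1\le j\le n-2$ evaluate at the origin to $\p/\p z_j|_0$ and $\p/\p \ov{z_j}|_0$ (since $a_{jh}(0)=\delta_{jh}$), yielding $2(n-2)$ complex-linearly independent vectors in $\mathcal{M}^0_\infty|_0$, hence $\dim_{\mathbb R} N^0\ge 2n-4$. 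The inclusion $N^0\subset \{w=f(z,\ov z)\}$ established just above immediately gives $\dim_{\mathbb R} N^0\le 2(n-1) = 2n-2$.

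To rule out $\dim_{\mathbb R} N^0 = 2n-4$, it suffices to exhibit a single element of $\mathcal{M}^0_\infty|_0$ outside the $(2n-4)$-dimensional span of $\{\p/\p z_j|_0,\p/\p \ov{z_j}|_0:1\le j\le n-2\}$, since any such vector would strictly enlarge the tangent space at $0$. By Corollary \ref{NN}, some $a_{j_0(n-1)}^{(l_0)}$ is a non-trivial polynomial of ordinary degree $l_0$ in $(z_1,\ldots,z_{n-2},\ov{z_1},\ldots,\ov{z_{n-2}})$. Recalling $S_j^0 = \p/\p z_j + a_{j(n-1)}^{(l_0)}\p/\p z_{n-1} + a_{jn}^{[m-1]}\p/\p w$, I would form an iterated Lie bracket $[X_1,[X_2,\cdots,[X_{l_0},S_{j_0}^0]\cdots]]$ with each $X_k\in\{S_i^0,\ov{S_i^0}:1\le i\le n-2\}$ chosen so that the accumulated $l_0$-fold directional differentiation extracts a non-zero Taylor coefficient of $a_{j_0(n-1)}^{(l_0)}$. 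At the origin this bracket has a non-zero $\p/\p z_{n-1}$-component; by Lemma \ref{rhom0} the $\p/\p w$-component is forced to vanish, but the $\p/\p z_{n-1}$-contribution is not constrained and gives the desired extra direction in $\mathcal{M}^0_\infty|_0$, contradicting the assumption $\dim_{\mathbb R} N^0 = 2n-4$.

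The main obstacle is the case analysis for producing the correct iterated bracket in each of the three normalizations of Corollary \ref{NN}. In case (I), where $a_{j_0(n-1)}^{(l_0)}$ is holomorphic in $(z_1,\ldots,z_{n-2})$, a pure $(1,0)$-iteration through the $S_i^0$'s suffices. In cases (II) and (III), where $a_{j_0(n-1)}^{(l_0)}$ is genuinely mixed in $z$ and $\ov z$, the bracket must interleave $S_i^0$'s and $\ov{S_i^0}$'s; the Euler-type constraint $\sum z_j a_{j(n-1)}^{(l_0)}=0$ of case (III) further restricts which monomials survive, forcing a careful choice of $j_0$ and of the differentiation pattern. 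The weighted bookkeeping enabled by Lemma \ref{klm} ($k<m$) ensures that the $\p/\p z_{n-1}$-contribution appears at the weighted order $-k$, strictly below the weighted order $-m$ of any $\p/\p w$-contribution coming from $a_{jn}^{[m-1]}$, so no unwanted cancellation with $\p/\p w$-terms can kill the $\p/\p z_{n-1}$-component at the order where it is produced.
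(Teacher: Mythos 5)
Your proposal is correct and follows essentially the same route as the paper: the lower bound $2n-4$ from the $S_j^0,\ov{S_j^0}$ evaluated at $0$, the upper bound $2n-2$ from $N^0\subset\{w=f\}$, and ruling out $2n-4$ by an iterated commutator that extracts a nonzero Taylor coefficient of $a_{j_0(n-1)}^{(l_0)}$ (holomorphic differentiations only in normalization (I), mixed $S^0$/$\ov{S^0}$ differentiations otherwise). The only cosmetic differences are that the paper works directly modulo $\{\p/\p w,\p/\p\ov w\}$ (and additionally $\p/\p\ov{z_{n-1}}$ in the mixed case) rather than invoking Lemma \ref{rhom0}, and it treats cases (II) and (III) of Corollary \ref{NN} uniformly through Proposition \ref{propn}(II), so the Euler constraint of case (III) does not in fact require a separate argument.
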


\begin{proof}
  The proof is carried out in two steps according to the properties of $a_{j(n-1)}^{(k-1)}(z,\ov{z})$ in  Proposition \ref{propn}.
  \medskip

  {\bf (1)}: Suppose we have the normalization in (I) of Proposition \ref{propn}. We suppose that  $(a_{j_0(n-1)}^{(k-1)})_{H+e_{\mu}}\neq 0$
   with  $H=(h_1,\cdots,h_{n-2})$ and $1\leq \mu\leq j_0-1$.
Then
$$
[S_\mu^0,S_{j_0}^0]=\frac{\p}{\p
z_{\mu}}(a_{j_0(n-1)}^{(k-1)})\frac{\p}{\p z_{{n-1}}}\ \text{mod} \
(\frac{\p }{\p w},\frac{\p }{\p \ov{w}}).
$$
Write
\begin{equation}\begin{split}\label{sh}
&(\stackrel{h_1\
\text{times}}{\overbrace{S_1^0,\cdots,S_1^0}},\stackrel{h_2 \
\text{times}}{\overbrace{S_2^0,\cdots,S_2^0}},\cdots,\stackrel{h_{n-2}\
\text{times}} {\overbrace{S_{n-2}^{0},\cdots,S_{n-2}^{0}}})\
\text{as}\ (X_1,\cdots, X_{|H|}).
\end{split}\end{equation}
Then
\begin{equation*}\begin{split}
&[X_{1},[\cdots [X_{|H|}, [S_\mu^0,S_{j_0}^0]]\cdots]]\\
=&(h_\mu+1)\cdot h_1!\cdots
h_{n-2}!(a_{j_0(n-1)}^{(k-1)})_{H+e_{\mu}} \frac{\p}{\p z_{{n-1}}}\
\text{mod} \ (\frac{\p }{\p w},\frac{\p }{\p \ov{w}}) .
\end{split}\end{equation*}
Since its conjugate is also in $\mathcal{M}_\infty^0$, we conclude
that the dimension of $N^0$ is $2n-2$.
\bigskip

{\bf (2)}: Suppose we have the normalization in (II) of Proposition
\ref{propn}. Then there is a
$(H,J)=(h_1,\cdots,h_{n-2},j_1,\cdots,j_{n-2} ) $ such that
$(a_{1(n-1)}^{(k-1)})_{H(J+e_\mu)}\neq 0$. Then
$$
[\ov{S_\mu^0},S_1^0]=\frac{\p}{\p \ov{z_\mu}
}(a_{1(n-1)}^{(k-1)})\frac{\p}{\p {z_{{n-1}}}}\ \text{mod} \
(\frac{\p}{\p \ov{z_{{n-1}}}},\frac{\p }{\p w},\frac{\p }{\p
\ov{w}}).
$$

Write $(X_1,\cdots, X_{|H|})$ as in (\ref{sh}) and write
\begin{equation*}\begin{split}
&(\stackrel{j_1\
\text{times}}{\overbrace{S_1^0,\cdots,S_1^0}},\stackrel{j_2\
\text{times}}{\overbrace{S_2^0,\cdots,S_2^0}},\cdots,\stackrel{j_{n-2}\
\text{times}} {\overbrace{S_{n-2}^{0},\cdots,S_{n-2}^{0}}})\
\text{as}\ (Y_1,\cdots, Y_{|J|}).
\end{split}\end{equation*}
Then

\begin{equation*}\begin{split}
Y_{HJ}:=&[X_1,[\cdots, [X_{|H|}, [\ov{Y_1},[\cdots,[\ov{Y_{|J|}},[\ov{S_\mu^0},S_1^0]]\cdots]\\
=&(j_{\mu}+1)\cdot h_1!\cdots h_{n-2}!j_1!\cdots
j_{n-2}!(a_{j(n-1)}^{(k-1)})_{H(J+e_{\mu})} \frac{\p}{\p
{z_{{n-1}}}} \text{mod} \ (\frac{\p}{\p \ov{z_{{n-1}}}},\frac{\p
}{\p w},\frac{\p }{\p \ov{w}}).
\end{split}\end{equation*}
Hence $Y_{HJ} \not\in \text{span}_{\mathbb{C}}\{S_j^0,\ov{S_j^0},
1\leq j\leq n-2\}$. Thus either Re$Y_{HJ}|_0\neq 0$ or
Im$Y_{HJ}|_0\neq 0$.

 Since $\frac{\p}{\p w}|_0$ and $\frac{\p}{\p \ov{w}}|_0$ are not tangent to $N^0$ at $0$,
  the dimension of $N^0$ is either $2n-3$ or $2n-2$.
\end{proof}

\begin{lem} When $N^0$ has real dimension $2n-2$, $f$ is a
weighted homogeneous polynomial of weighted degree $m$.
\end{lem}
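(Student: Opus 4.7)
The plan is to exploit the weighted scaling invariance built into the truncated model. Consider the anisotropic dilations
\begin{equation*}
\delta_t(z_1,\ldots,z_{n-2},z_{n-1},w)=(tz_1,\ldots,tz_{n-2},t^k z_{n-1},t^m w),\qquad t>0,
\end{equation*}
where $k=l_0+1$ and $m$ are the weights fixed in this section. Because $\chi^{[m]}(z,\ov{z},0)$ is weighted homogeneous of weight $m$, the hypersurface $M^0$ is preserved by every $\delta_t$, and this scaling is the natural symmetry to leverage.

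First I would verify that $\delta_t^* S_j^0 = t^{-1} S_j^0$, i.e.\ each $S_j^0$ is weighted homogeneous of degree $-1$. This is built into the definition of $S_j^0$: the coefficient $a_{j(n-1)}^{(k-1)}$ is weighted homogeneous of degree $k-1$ and $a_{jn}^{[m-1]}$ is weighted homogeneous of degree $m-1$, so when paired with $\partial/\partial z_{n-1}$ (weight $-k$) and $\partial/\partial w$ (weight $-m$) they produce weight-$(-1)$ fields. Consequently, the complex distribution spanned by $\{S_j^0,\ov{S_j^0}\}$ is $\delta_t$-invariant, and so are all its iterated Lie brackets, hence $\mathcal{M}^0_\infty$ itself. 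By the uniqueness of the real-analytic integral submanifold in Nagano's theorem, the germ $N^0$ at the origin is also $\delta_t$-invariant for every $t>0$.

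Under the dimension hypothesis $\dim_{\mathbb{R}} N^0 = 2n-2$, the inclusion $N^0\subset\{w=f(z,\ov{z})\}$ combined with matching real dimensions forces $N^0$ to coincide, as a germ at $0$, with the graph of $f$. The $\delta_t$-invariance of $N^0$ then translates into the functional equation
\begin{equation*}
f(tz_1,\ldots,tz_{n-2},t^kz_{n-1},t\ov{z_1},\ldots,t\ov{z_{n-2}},t^k\ov{z_{n-1}})=t^m f(z,\ov{z}),\qquad t>0.
\end{equation*}
Expanding $f$ in its convergent Taylor series at the origin and comparing weighted degrees, every monomial that appears in $f$ must have weighted degree exactly $m$. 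Since every coordinate has positive integer weight, only finitely many monomials can satisfy this constraint, so $f$ is a weighted homogeneous polynomial of weighted degree $m$.

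The computations involved are routine; the main conceptual point is that the normalizations of $\S 3$ have been arranged precisely so that the leading objects $S_j^0$, $a_{jn}^{[m-1]}$, $\chi^{[m]}$ and $M^0$ are all homogeneous under the weighted scaling. The only place where one must be careful is in confirming the uniqueness step: the germ produced by Nagano's theorem is intrinsically attached to $\mathcal{M}^0_\infty$, so a symmetry of the module must fix the germ, and the rest is Taylor expansion.
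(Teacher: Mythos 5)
Your proof is correct and is essentially the same argument as the paper's, reorganized into the language of the anisotropic dilation group. The paper extracts the weighted-degree-$m$ part of the identity $X^0(-w+f)=\ov{X^0}(-w+f)\equiv 0$ for a weighted-homogeneous $X^0\in\mathcal{M}^0_\infty$ to conclude that $w=f^{[m]}$ is another integral manifold through $0$, and then invokes the Nagano uniqueness to get $f=f^{[m]}$; you instead observe that $\delta_t$ sends $\mathcal{M}^0_\infty$ to itself (because each $S_j^0$ has weighted degree $-1$), so that $\delta_t(N^0)$ and $N^0$ are both Nagano germs of $\mathcal{M}^0_\infty$ through $0$ and must coincide. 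Both rest on the same two pillars — weighted homogeneity of the truncated frame $\{S_j^0,\ov{S_j^0}\}$ and the uniqueness clause of Nagano's theorem — and both reduce to the same algebraic fact. Your formulation is slightly cleaner conceptually (scaling symmetry of the module implies scaling symmetry of the germ implies homogeneity of the graph function), and it makes explicit the verification that $S_j^0$ is weighted homogeneous of degree $-1$, which the paper only asserts in passing.
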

\begin{proof} Let $X^0$ be a (weighted) homogeneous vector field from
$\mathcal{M}^0_\infty$. Then from the equality that
$X^0(-w+f)=\ov{X^{0}}({-w+f})\equiv 0$, it follows that
$X^0(-w+f^{[m]})=X^{0}(\ov{-w+f^{[m]}})\equiv 0$. Hence the manifold
defined by $w=f^{[m]}$ is also an integral manifold of the module
$\mathcal{M}^0_\infty$ through $0$. By the uniqueness of the
integrable manifold, we conclude that $f^{[m]}=f$.
\end{proof}

 The rest of the argument is carried out according to the  dimension of $N^0$. We remark that when
  the real
  dimension of $N^0$ is
 $2n-3$, it is a CR submanifold of hypersurface type, for it has a constant CR dimension $n-2$ everywhere.
 When its dimension is
 $2n-2$, it has CR dimension $n-1$ at the origin. Since it cannot be Levi-flat due to the fact that
  $\hbox{Re}(f)\not \equiv 0$, it is thus  a codimension two CR singular submanifold.

\medskip
{\bf Step I.} In this step, we suppose $N^0$ is of real dimension $2n-2$.
Since $\ov{S^0_j}$ is tangent to $N^0$,
and since $N^0$ is defined by $w=f(z,\ov{z})$ for $z\approx 0$ in
${\mathbb C}^{n-1}$, we have
\begin{equation}\label{sf}
\frac{\p }{\p
\ov{z_j}}f(z,\ov{z})+\ov{a_{j(n-1)}^{(k-1)}(z_1,\cdots,z_{n-2},\ov{z_1},\cdots,\ov{z_{n-2}})}
\frac{\p }{\p \ov{z_{n-1}}}f(z,\ov{z})=0,\ z \in\mathbb{C}^{n-1}.
\end{equation}

 By Lemma \ref{klm}, we  have $k<m\leq a^{(n-2)}(M,0)$. Our next discussions are divided into the
 following cases according to the normalization in Corollary \ref{NN}.\medskip

{\bf Case (1):} In this case, suppose that we have the normalization
in (1) of Corollary \ref{NN}. For $1\leq j\leq j_0-1$,
$a_{j(n-1)}^{(k-1)}\equiv 0$. Thus (\ref{sf}) takes the form
$\frac{\p f}{\p \ov{z_j}}=0$. Hence $f$ is holomorphic in
$z_1,\cdots,z_{j_0-1}$. By Lemma \ref{lem4-tech} to be proved in $\S
5$,  since $\hbox{Re}(f)$ is plurisubharmonic and contains
non-trivial holmorphic terms, $f$ is in fact independent of
$z_1,\cdots,z_{j_0-1}$. Setting $j=j_0$ in (\ref{sf}), we obtain
$$
\frac{\p f}{\p \ov{z_{j_0}}}=-\ov{a^{(k-1)}_{j_0(n-1)}}\frac{\p f}{\p \ov{z_{n-1}}}.
$$
 Notice that the left hand side is independent of $z_1,\cdots,z_{j_0-1}$. On the other hand,
  the right hand side is  divisible by $\ov{a^{(k-1)}_{j_0(n-1)}}$, in which each term depends on $z_1,\cdots,z_{j_0-1}$
   or their conjugates. Thus $\frac{\p f}{\p \ov{z_{j_0}}}=\frac{\p f}{\p \ov{z_{n-1}}}=0$.
   Substituting this back to (\ref{sf}), we obtain $\frac{\p f}{\p \ov{z_{j}}}=0$ for each $1\leq j\leq n-2$.
   Thus $f$ is holomorphic in $z_1,\cdots,z_{n-1}$.
       However, $\chi^{[m]}=\text{Re}(f)\neq 0$ does not contain any non-trivial holomorphic term.
          We thus reach a contraction.
\medskip

{\bf Case (2):} In this case,  suppose we have the normalization in
(2) of  Corollary \ref{NN}. Letting $j=1$ in (\ref{sf}) and
restricting the equation to $z_1$ and $z_{n-1}$ spaces, we obtain:
 \begin{equation}\label{sf*}
\big(\frac{\p f}{\p \ov{z_1}}+\ov{a_{1(n-1)}^{(k-1)}}\frac{\p }{\p
\ov{z_{n-1}}}f\big)(z_1,0,\cdots,0,z_{n-1},\ov{z_1},0,\cdots,0,\ov{z_{n-1}})=0.
\end{equation}
By our assumption,
$a_{1(n-1)}^{(k-1)}(z_1,0,\cdots,0,z_{n-2},\ov{z_1},0,\cdots,0,\ov{z_{n-2}})$
is not identically zero and contains no non-trivial holomorphic
terms. By Theorem \ref{main-tech}, we know $\chi^{[m]}=\text{Re}(f)=
0$ when restricted to $z_1$ and $z_{n-1}$ spaces. This contradicts
the last normalization in (2) of Corollary  \ref{NN}.
\medskip

{\bf Case (3):} In this case, suppose we have the normalization in
(3) of Corollary  \ref{NN}. Then we have
$\sum_{j=1}^{n-2}z_ja^{(k-1)}_{j(n-1)}(z_1,\cdots,z_{n-2},\ov{z_1},\cdots,\ov{z_{n-2}})=0$.
Since $f_{\ov{z_j}}+\ov{a^{(k-1)}_{j(n-1)}}f_{\ov{z_{n-1}}}=0$ and
$a^{(k-1)}_{j(n-1)}$ is independent of $z_{n-1}$ and $w$, we get
$$
\sum_{j=1}^{n-2} \ov{z_j}f_{\ov{z_j}}(z_1,\cdots,z_{n-1},\ov{z_1},\cdots,\ov{z_{n-1}})=0.
$$
This is again the well-known Euler equation on $f$.  Write
$f(z,\ov{z})=\sum_{|\alpha|\geq 0}g_\alpha(z)\ov{z}^{\alpha}$, where
$g(z)$ is holomorphic in $z$. Then
$$
\sum_{j=1}^{n-2}
\ov{z_j}f_{\ov{z_j}}=\sum_{j=1}^{n-2}\sum_{|\alpha|\geq
0}g_\alpha(z)\alpha_j\ov{z}^{\alpha}=\sum_{|\alpha|\geq
0}(\sum_{j=1}^{n-2}\alpha_j)g_\alpha(z)\ov{z}^{\alpha}=0.
$$
Hence $g_\alpha(z)=0$ for $\sum_{j=1}^{n-2}|\alpha_j|>0$. Thus
$f(z_1,\cdots,z_{n-1},\ov{z_1},\cdots,\ov{z_{n-1}})$ is holomorphic
in $z_1,\cdots,z_{n-2}$. Hence $f_{\ov{z_j}}=0$ for each $1\leq
j\leq n-2$. Substituting this back to
$f_{\ov{z_j}}+\ov{a^{(k-1)}_{j(n-1)}}f_{\ov{z_{n-1}}}=0$, we know
$\ov{a^{(k-1)}_{j(n-1)}}f_{\ov{z_{n-1}}}=0$. Recall that at least
one $a^{(k-1)}_{j(n-1)}$ is not holomorphic and thus is nonzero.
Thus $f_{\ov{z_{n-1}}}=0$. Hence
$f(z_1,\cdots,z_{n-1},\ov{z_1},\cdots,\ov{z_{n-1}})$ is holomorphic
in $z_1,\cdots,z_{n-1}$.  Since Re$f$ contains no non-trivial
holomorphic terms, we reach a contradiction.

\medskip
{\bf Step II.} In this step, we suppose $N$ is of real dimension
$2n-3$. \medskip

Without loss of generality, we  assume Re$Y_{HJ}|_0\neq 0$. Then
$$\mathbb{C}TN^0=\text{Span}_{\mathbb{C}}\{S_1^0,\cdots,S_{n-2}^0,\ov{S_1^0},\cdots,\ov{S_{n-2}^0},\text{Re}
Y_{HJ}\} \ \text{near}\ 0.$$
 Thus $N^0$ is a CR manifold of
hypersurface type of finite type in the sense of
H\"{o}mander-Bloom-Graham. With a rotation in $z_{n-1}$-variable, we
can assume that Re$Y_{HJ}|_0=\frac{\p}{\p x_{n-1}}|_0$. Now, we
define $\pi: N^0\rightarrow \mathbb{C}^{n-1}$ by sending
$(z_1,\cdots,z_{n-1},w)$ to $(z_1,\cdots,z_{n-1})$. $\pi$ is a CR
immersion near $0$. Write $\pi(N^0)=\w{N^0}\subset
\mathbb{C}^{n-1}$. Then $\w{N^0}$ is a real hypersurface in
$\mathbb{C}^{n-1}$ and $\pi^{-1}:\w{N^0}\rightarrow N^0$ is a local
real analytic CR diffeomorphism with $\pi^{-1}(0)=0$. Write
$$
\pi^{-1}(z_1,\cdots,z_{n-1})=(z_1,\cdots,z_{n-1},h(z_1,\cdots,z_{n-1})).
$$
Since real analytic CR functions are restrictions of  holomorphic
functions, we can assume that $h(z_1,\cdots,z_{n-1})$ is a
holomorphic function. Notice that $h=O(|z|^2)$ and define
$(\xi_1,\cdots,\xi_{n-1},\eta)=F(z_1,\cdots,z_{n-1},w)=(z_1,\cdots,z_{n-1},w-h(z_1,\cdots,z_{n-1}))$.
Then
$$
F(N^0)\subset \mathbb{C}^{n-1}\times \{0\}=\{(\xi_1,\cdots,\xi_{n-1},0):\ \xi_1,\cdots,\xi_{n-1}\in \mathbb{C}\}.
$$

Also, $F(M^0)$ is defined by
$-2\text{Re}\eta+2\text{Re}h(\xi)+\chi^{[m]}(\xi,\ov{\xi},0)=0$ or
$2\text{Re}\eta=2\text{Re}h(\xi)+\chi^{[m]}(\xi,\ov{\xi},0)=\w{\rho}(\xi,\ov{\xi})$.
Notice that $F(M^0)$ is holomorphically equivalent to $M^0$. Hence
$F(M^0)$ is also pseudo-convex and of finite type in the sense of
H\"{o}mander-Bloom-Graham. Notice that $\w{N^0}=F(N^0)\subset
\w{M_0}=F(M^0)$. Hence, $\forall \xi \in \w{N^0}$,
$\w{\rho}(\xi,\ov{\xi})=0$. Notice that $\w{\rho}=O(|\xi|^2)$ and is
plurisubharmonic. By the following proposition, we reach a
contradiction to the assumption that
$2\text{Re}h(\xi)+\chi^{[m]}\not\equiv 0$.

\begin{prop}\label{disc}
 Let $N$ be a real analytic  hypersurface  in $\mathbb{C}^{n-1}$ with $0\in N$ with $n\ge 3$.
  Let $\rho(z,\ov{z})$ be a real analytic plurisubharmonic function with $\rho=O(|z|^2)$ as $z\rightarrow 0$
  defined over a neighborhood of ${\mathbb C}^{n-1}$. Assume that $N$ is of finite type in the sense of
  H\"{o}mander--Bloom-Graham and $N\subset \{\rho=0\}$. Then $\rho\equiv 0$.
\end{prop}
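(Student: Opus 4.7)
My plan is to argue by contradiction: suppose $\rho\not\equiv 0$ in any neighborhood of $0$. Plurisubharmonicity implies that $\rho$ is subharmonic in the underlying real Euclidean structure on $\mathbb{C}^{n-1}\simeq\mathbb{R}^{2(n-1)}$, and the hypothesis $\rho=O(|z|^2)$ gives $\rho(0)=0$ and $\nabla\rho(0)=0$. If $\rho\le 0$ throughout some neighborhood of $0$, then $\rho$ attains its maximum $0$ at the interior point $0$, and the strong maximum principle for subharmonic functions forces $\rho\equiv 0$, contradicting the assumption; hence $\rho$ takes positive values arbitrarily close to $0$.

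Next, factor $\rho=r^k h$ near $0$, where $r$ is a real-analytic defining function of $N$ with $\nabla r(0)\ne 0$, the integer $k\ge 1$ is chosen maximal, $h$ is real analytic, and $h|_N\not\equiv 0$. The hypothesis $\rho=O(|z|^2)$ forces either $k\ge 2$, or $k=1$ with $h(0)=0$. Pick $p\in N$ arbitrarily close to $0$ with $h(p)\ne 0$. If $k$ is odd, then $\rho$ changes sign across $N$ near $p$, so one side of $N$ locally lies in $\{\rho<0\}$ and satisfies the interior ball condition at $p$; the classical Hopf boundary-point lemma applied to the subharmonic function $\rho$ on this side yields a strictly positive outward normal derivative at $p$, contradicting $\nabla\rho(p)=k\,r(p)^{k-1}h(p)\nabla r(p)=0$ (which holds automatically when $k\ge 2$, since $r(p)=0$). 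The subcase $k=1$, $h(0)=0$ is handled at $p=0$ itself by the same mechanism, using that the leading term $\ell_r\ell_h$ of $\rho$ at $0$ is a product of two real-linear forms and therefore changes sign, so that a quadrant of $\{\rho<0\}$ satisfies the interior ball condition at $0$ and Hopf contradicts $\nabla\rho(0)=0$.

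The remaining case, which I expect to be the main obstacle, is $k$ even with $\rho\ge 0$ throughout a neighborhood of $0$ (so that $h\ge 0$ on $N$, and Hopf does not apply at $0$ because $\rho$ attains a boundary \emph{minimum} on either side of $N$). Here I would invoke the finite-type hypothesis on $N$ through a pluripolarity argument: a real analytic hypersurface of finite H\"ormander--Bloom--Graham type in $\mathbb{C}^{n-1}$ cannot be pluripolar, since CR wedge extension on minimal CR submanifolds (Hanges--Treves, Tumanov) produces an abundance of holomorphic functions not vanishing identically on $N$, and more classically, by the Diederich--Fornaess theorem $N$ contains no germ of a complex analytic subvariety. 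On the other hand, I would construct a plurisubharmonic function taking the value $-\infty$ precisely on $\{\rho=0\}\supset N$, for example via a Cegrell--Kiselman regularization of $\log\rho_+$, or by exploiting the real analytic structure of $\rho$ together with the factorization $\rho=r^{2\ell}g$ (with $g\ge 0$ on $N$) to express $\rho$ locally as a suitable combination of $|f_i|^2$ with $f_i$ holomorphic; this exhibits $N$ as pluripolar and gives the desired contradiction. The delicate point is carrying out this psh construction rigorously without assuming $\log\rho$ itself is plurisubharmonic, and it is here that both the pseudoconvexity (plurisubharmonicity of $\rho$) and the finite-type hypothesis on $N$ are used in an essential way.
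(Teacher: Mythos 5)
Your route is genuinely different from the paper's. The paper attaches small holomorphic discs $\phi:\Delta\to\mathbb{C}^{n-1}$ to $N$ with $\phi(1)=0$; since $\rho\circ\phi$ is subharmonic on $\Delta$ and vanishes on $\partial\Delta$, the maximum principle gives $\rho\circ\phi\le 0$, a one--variable Hopf lemma at $\xi=1$ combined with $\rho\circ\phi=O(|\xi-1|^2)$ forces $\rho\circ\phi\equiv 0$, and Tr\'epreau's theorem (which is where the finite type, i.e.\ minimality, of $N$ enters) guarantees that such discs sweep out a one--sided neighborhood of $N$, so $\rho\equiv 0$ on an open set and hence identically by real analyticity. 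Your factorization $\rho=r^kh$ and the Hopf argument at points $p\in N$ with $h(p)\ne 0$ correctly handles $k$ odd with $k\ge 3$, and, implicitly, $k$ even with $h<0$ somewhere on $N$ (where the strong maximum principle already applies); but two cases have genuine gaps.

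The first gap is the subcase $k=1$, $h(0)=0$. The region where $\ell_r\ell_h<0$ does \emph{not} satisfy the interior ball condition at the origin: if $\ell_r,\ell_h$ are independent linear forms, $\{\ell_r\ell_h<0\}$ is a union of two proper cones, and a ball $B(c,R)$ with $0\in\partial B$ (so $|c|=R$) contained in $\{\ell_r>0,\ell_h<0\}$ would require both $\ell_r(c)\ge R\|\nabla\ell_r\|$ and $-\ell_h(c)\ge R\|\nabla\ell_h\|$, each forcing $c$ to be parallel to a fixed vector, which is impossible when $\ell_r\not\parallel\ell_h$. Thus Hopf does not apply at $0$; at nearby $p\in N$ with $h(p)\ne 0$ one has $\nabla\rho(p)=h(p)\nabla r(p)\ne 0$, so Hopf gives no contradiction there either. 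This gap is essential, not cosmetic: with $\rho=\text{Im}(z_1^2)=2x_1y_1$ and $N=\{x_1=0\}$ (Levi-flat, hence infinite type), every hypothesis of the proposition except finite type holds, $\rho\not\equiv 0$, and this falls exactly in your $k=1$, $h(0)=0$ subcase --- so the finite type hypothesis must be used there, yet your argument never invokes it in that subcase. The second gap, which you flag yourself, is the $k$ even, $\rho\ge 0$ case: the construction of a psh function equal to $-\infty$ precisely on $\{\rho=0\}$ is not carried out and is nontrivial (as you note, $\log\rho$ need not be psh); again $\rho=(\text{Re}\,z_1)^2$, $N=\{x_1=0\}$ shows finite type must enter. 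The paper's disc--attachment argument sidesteps all of this parity analysis by reducing to a one--variable Hopf lemma that works uniformly.
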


\begin{proof}
Let $\phi:\Delta\rightarrow \mathbb{C}^{n-1}$ be a smooth small
holomorphic disk attached to $N$ with $\phi(1)=0$. Namely, we assume
that $\phi\in C^\infty(\ov{\Delta})\cap \text{Hol}(\Delta)$,
$\phi(\p \Delta)\subset N$, $\phi(1)=0$, $\phi(\ov{\Delta})$ is
close to $0$. Since $\rho(\phi(\xi),\ov{\phi(\xi)})=0$ on $\p
\Delta$ and $\frac{\p}{\p \xi\p
\ov{\xi}}\rho(\phi(\xi),\ov{\phi(\xi)})\geq 0$ for $\xi\in \Delta$,
$\rho(\phi(\xi),\ov{\phi(\xi)})$ is a subharmonic function in
$\Delta$ smooth up to $\p \Delta$. By the maximum principle, we have
$\rho(\phi(\xi),\ov{\phi(\xi)})<0$ for $\xi\in \Delta$ unless
$\rho(\phi(\xi),\ov{\phi(\xi)})\equiv0$ for $\xi\in \Delta$. Now, we
apply the Hopf Lemma to get
$$
\frac{d}{d \xi}\rho(\phi(\xi),\ov{\phi(\xi)})|_{\xi=1}\ge 0
$$
 and the equality holds if and only if $\rho(\phi(\xi),\ov{\phi(\xi)})\equiv0$. On the other hand, $$\rho(\phi(\xi),\ov{\phi(\xi)})=O(|\phi(\xi)|^2)=O(|\phi(\xi)-\phi(1)|^2)=O(|\xi-1|^2)$$
as $\xi(\in (0,1))\rightarrow 1$. We conclude that $\rho(\phi(\xi),\ov{\phi(\xi)})\equiv0$.

Next, by a result of Tr\'epreau \cite{Tr}, since the union
$\phi(\Delta)$ of all attached discs fill in at least one side of
$N$ near $0$, we see that $\rho\equiv 0$ in one side of $N$. Since
we assumed that $\rho$ is real analytic, we conclude that
$\rho\equiv0$. This completes the proof of Proposition \ref{disc}.
\end{proof}

We thus complete the proof of the equality that
$t^{(n-2)}(M,p)=a^{(n-2)}(M,p)$.
\end{proof}


\begin{proof}[Proof of the equality: $c^{(n-2)}(M,p)=a^{(n-2)}(M,p)$]

We continue to use the notations and initial setups as in $\S 2$ and
$\S 3$. By \cite{Bl2}, we  have $c^{(n-2)}(M,p=0)\geq
a^{(n-2)}(M,p=0)$. We will seek a contradiction supposing that
$c^{(n-2)}(M,0)> a^{(n-2)}(M,0)$.

Let $B$ be an $(n-2)$-dimensional smooth subbundle of $T^{1,0}M$
such that $c^{(n-2)}(M,0)=c^{(n-2)}(B,0)$.
Repeating the normalization procedures as in $\S 3$, we can find a
basis $\{S_j\}$ of $B$
  and  a defining function $\rho$
 that satisfy  the   normalization conditions  as in Corollary \ref{NN}.
  Since $c^{(n-2)}(M,0)>a^{(n-2)}(M,0)$,  for any $l\leq
  a^{(n-2)}(M,0)$, we have
\begin{equation}\label{levid}
F_1\cdots F_{l-2}\sum_{j=1}^{n-2}\p\ov{\p}\rho(S_j,\ov{S_j})=0\
\text{for any}\ F_1,\cdots,F_{l-2}\in \mathcal{M}_1(B).
\end{equation}

As in the proof of $t^{(n-2)}(M,p)=a^{(n-2)}(M,p)$, we can similarly
define the weights of $z_1,\cdots,z_{n-1},w$, and define  $S_j^0$,
$\mathcal{M}^0$, $M^0$, $\mathcal{M}^0_l$, $\mathcal{M}^0_\infty$.
By  the same argument as that in  Lemma \ref{klm}, we have $k<m$.
Similar to Lemma \ref{rhom0}, we have the following:

\begin{lem}\label{hlevi}
  For any $l$ and  $ Y^0_1,\cdots,Y^0_{l-2}\in \mathcal{M}^0_1$, we have
  $$Y^0_1\cdots
  Y^0_{l-2}\sum_{j=1}^{n-2}\p\ov{\p}\rho^{[m]}(S^0_j,\ov{S^0_j})(0)=0.$$
\end{lem}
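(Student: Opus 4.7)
The plan is to follow the weight-tracking strategy of Lemma \ref{rhom0}, with hypothesis (\ref{levid}) playing the role that (\ref{lieb}) did there and with the Levi-trace function
\[
\Phi:=\sum_{j=1}^{n-2}\p\ov\p\rho(S_j,\ov{S_j})
\]
replacing the contact pairing $\langle\cdot,\p\rho\rangle$. A direct computation shows that the lowest weighted layer of $\Phi$ is
\[
\Phi^{[m-2]}=\sum_{j=1}^{n-2}\p\ov\p\rho^{[m]}(S_j^0,\ov{S_j^0}),
\]
of weight $m-2$, since pairing two weight-$(-1)$ vector fields against the complex Hessian of a weight-$m$ function produces weight $m-2$; the higher layers $\Phi^{[m-2+\tau]}$ with $\tau>0$ involve either higher-weight parts of $\rho$ or subleading parts of $S_j$.

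Given $Y_j^0=\sum_k c_k S_k^0+\sum_k d_k\ov{S_k^0}\in\mathcal{M}^0_1$, I would lift it to $\w Y_j=\sum_k c_k S_k+\sum_k d_k\ov{S_k}\in\mathcal{M}_1(B)$ (extending coefficients from $M^0$ to $M$), so that the weight-$(-1)$ part of $\w Y_j$ coincides with that of $Y_j^0$. Decomposing each $Y_j^0$ and $\Phi$ into weighted-homogeneous summands, the evaluation at $0$ of $Y_1^0\cdots Y_{l-2}^0\Phi^{[m-2]}$ picks out only the weight-$0$ stratum, which is indexed by tuples $(w_1,\ldots,w_{l-2})$ with $w_j\ge-1$ and $\sum w_j=2-m$, and the analogous decomposition applies to $\w Y_1\cdots\w Y_{l-2}\Phi(0)$.

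The case analysis on $l$ then goes as follows. If $l<m$, no admissible tuple exists because $\sum w_j\ge-(l-2)>2-m$, so the evaluation at $0$ vanishes automatically. If $l=m$, the unique admissible tuple is $w_1=\cdots=w_{m-2}=-1$, and the same weight-matching applied to $\w Y_1\cdots\w Y_{m-2}\Phi(0)$ shows that this value reduces to $Y_1^{0,\{-1\}}\cdots Y_{m-2}^{0,\{-1\}}\Phi^{[m-2]}(0)$; since $m\le a^{(n-2)}(M,0)$, hypothesis (\ref{levid}) at chain length $m$ forces this to vanish. For $l>m$ several configurations contribute; the plan is to isolate them one at a time by specializing the lifts $\w Y_j$ to have coefficients $c_k,d_k$ that are themselves weighted-homogeneous polynomials of prescribed degree, so that only a single prescribed configuration survives in $\w Y_1\cdots\w Y_{l-2}\Phi(0)=0$, and then to close by induction on the weighted layer $\tau$ of $\Phi$ (together with induction on $l$), using (\ref{levid}) at successive chain lengths $l\le a^{(n-2)}(M,0)$.

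The main obstacle I anticipate is the careful bookkeeping in the $l>m$ step: one must verify that the freedom in choosing $\w Y_j$ with weighted-homogeneous coefficients genuinely suffices to separate every admissible configuration, and that the induction on $\tau$ and $l$ closes cleanly in tandem with the weighted expansion of $\Phi$. This is the same delicate weight-tracking already required in Lemma \ref{rhom0}; once the weight assignments on $\p/\p z_j$, $\p/\p z_{n-1}$, $\p/\p w$ and their conjugates are handled systematically, (\ref{levid}) becomes strong enough to pin down every weight-$0$ contribution and thereby yield the asserted vanishing.
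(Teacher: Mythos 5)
Your handling of the cases $l<m$ and $l=m$ tracks the paper closely and is correct: in both cases the weight bookkeeping combined with (\ref{levid}) at chain length $m$ does exactly what you say.

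The gap is in the case $l>m$. First, the strategy you sketch — isolating configurations by special lifts and closing by ``induction on $l$, using (\ref{levid}) at successive chain lengths $l\le a^{(n-2)}(M,0)$'' — cannot work as stated, because the lemma must hold for \emph{all} $l$, including $l>a^{(n-2)}(M,0)$, where (\ref{levid}) provides no information. Second, and more fundamentally, you are missing the one observation that actually closes this case. If each $Y_j^0$ is taken to be weighted homogeneous of degree $-1$ (i.e.\ a constant-coefficient combination of $S_k^0,\ov{S_k^0}$), then $Y^0:=Y_1^0\cdots Y_{l-2}^0\,\Phi^{[m-2]}$ is a weighted-homogeneous polynomial of weighted degree $m-l<0$, and the only such polynomial is identically zero. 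Nothing further is needed — this is precisely what the paper does. For a general $Y_j^0\in\mathcal{M}^0_1$ with non-constant coefficients, the ``several configurations'' you identify in the weight-$0$ stratum all arise, via the Leibniz expansion, from exactly $m-2$ of the weight-$(-1)$ operators $S_k^0,\ov{S_k^0}$ reaching $\Phi^{[m-2]}$ while the remaining operators land on coefficient functions; so each such contribution is a constant multiple of an expression already covered by the $l=m$ case, not by longer chains. In other words, $l>m$ reduces downward to $l=m$ (or, in the homogeneous form, vanishes identically by the negative-weight argument), rather than requiring any new use of (\ref{levid}).
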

\begin{proof}
First notice that $Y^0:=Y^0_1\cdots
Y^0_{l-2}\sum_{j=1}^{n-2}\p\ov{\p}\rho^{[m]}(S^0_j,\ov{S^0_j})$ is a
weighted homogeneous polynomial of weighted degree $-l+m$. Hence
$Y^0=0$ when $l>m$ and $Y^0|_0=0$ when $l<m$.

  Next we suppose $l=m$. For any $1\leq j\leq l-2$, suppose $Z_j\in \mathcal{M}_1$ such that $(Z_j)^0=Y_j^0$. By (\ref{levid}),
   we have
$$
Z_1\cdots Z_{m-2}\sum_{j=1}^{n-2}\p\ov{\p}\rho(S_j,\ov{S_j})(0)=0.
$$
Notice that
$$
Z_1\cdots
Z_{m-2}\sum_{j=1}^{n-2}\p\ov{\p}\rho(S_j,\ov{S_j})=Y^0_1\cdots
Y^0_{m-2}\sum_{j=1}^{n-2}\p\ov{\p}\rho^{[m]}(S^0_j,\ov{S^0_j})+o(1).
$$
We thus have $Y^0(0)=0$ for $l=m$. This completes the proof of Lemma
\ref{hlevi}.
\end{proof}
Now we similarly apply  the Nagano theorem to conclude that
$\mathcal{M}^0_\infty$ gives a unique real analytic integral
submanifold $N^0$ with $N^0\subset
M^0=\{-2\text{Re}w+\chi^{[m]}(z,\ov{z},0)=0\}$. Since the tangent
space
 at each point of $N^0 $ is generated by Re$\mathcal{M}^0_\infty$,
by Lemma \ref{hlevi}, we have
$$
\sum_{j=1}^{n-2}\p\ov{\p}\rho^{[m]}(S^0_j,\ov{S^0_j})\equiv 0\
\text{on}\ N^0,
$$
for $\rho^{[m]}(S^0_j,\ov{S^0_j})$ is real-analytic  and it vanishes
to infinite order at $0$ along $N^0$.
Since $\rho^{[m]}$ is plurisubharmonic, we
have $\p\ov{\p}\rho^{[m]}(S^0_j,\ov{S^0_j})\geq 0$ on $M^0$. Notice
that $N^0\subset M^0$, we have
$\p\ov{\p}\rho^{[m]}(S^0_j,\ov{S^0_j})\equiv 0$ on $N^0$. Hence
$\hbox{Re}(S_j^0), \hbox{Im}(S_j^0)\in T^N(N^0)$. By
\cite[Proposition 2]{DF}, for any vector field $Y^0$ in
$\mathcal{M}^0_j$, $\hbox{Re}(Y^0), \ \hbox{Im}(Y^0)\in T^N(N^0)$
for each $j$. Hence for any $ Y^0\in \mathcal{M}^0_j$, we have
$\langle Y^0,\p \rho^{[m]} \rangle(0)=0$, for both the real part and
the imaginary part of $Y^0|_0$ are in $\hbox{Re}(T_0^{(1,0)}N^0)$.
This then reduces the rest of the proof to that in the proof of the
equality of $t^{(n-2)}(M,0)=a^{(n-2)}(M,0)$. The proof of the
equality $c^{(n-2)}(M,p)=a^{(n-2)}(M,p)$ is now complete.
 \end{proof}

\section{Applications  of positivity: Proofs of four lemmas}

In this section, we prove four lemmas concerning  a homogeneous
polynomial whose real part is plurisubharmonic.  These lemmas  will
be used for the proof of Theorem \ref{main-tech} in $\S 6$. (Lemma
\ref{lem4-tech} was also used in $\S 3$).
We begin with the following:

\begin{lem}\label{lem1-teh}
  Let $h(\xi,\ov{\xi})$ be a homogeneous polynomial of $(\xi,\ov{\xi})\in \mathbb{C}\times \mathbb{C}$. Suppose that
\begin{equation}\begin{split}\label{heq}
hh_{\xi\ov{\xi}}-h_{\xi}h_{\ov{\xi}}=0.
\end{split}\end{equation}
Then $h$   must be a monomial. Namely, $h=c\xi^j\ov{\xi}^k$ for a
certain complex number $c$.

\end{lem}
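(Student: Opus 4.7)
The plan is to reduce the PDE $hh_{\xi\ov\xi}-h_\xi h_{\ov\xi}=0$ to an ordinary differential equation in one variable via homogeneity, and then integrate. Morally the equation says $(\log h)_{\xi\ov\xi}=0$ away from $\{h=0\}$, so one expects $h$ to split as a product of a function in $\xi$ alone times a function in $\ov\xi$ alone; homogeneity will then force each factor to be a monomial.

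Concretely, with $d=\deg h$ (the case $h\equiv 0$ being trivial), on the open set $\xi\ne 0$ I would introduce $t=\ov\xi/\xi$ and write
\begin{equation*}
h(\xi,\ov\xi)=\xi^{d}P(t),\qquad P(t)=\sum_{j=0}^{d}a_{j}t^{j}.
\end{equation*}
A routine computation yields $h_{\xi}=\xi^{d-1}(dP-tP')$, $h_{\ov\xi}=\xi^{d-1}P'$ and $h_{\xi\ov\xi}=\xi^{d-2}\bigl((d-1)P'-tP''\bigr)$. Substituting into (\ref{heq}) and cancelling the common factor $\xi^{2d-2}$, the PDE collapses to the single ODE
\begin{equation*}
-P(t)P'(t)+t\bigl((P'(t))^{2}-P(t)P''(t)\bigr)=0.
\end{equation*}

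On any open subset of $\CC$ on which $P\ne 0$—which exists unless $P\equiv 0$, a case already handled—I would divide by $P^{2}$ and use the identity $(P'/P)'=P''/P-(P'/P)^{2}$ to rewrite the ODE as $(tQ)'=Q+tQ'=0$, where $Q:=P'/P$. Hence $tQ$ is constant, i.e., $P'/P=C/t$ for some $C\in\CC$, which integrates to $P(t)=A\,t^{C}$. For $P$ to be a polynomial, one must have $C=k\in\{0,1,\dots,d\}$, and the identity $P(t)=At^{k}$ then extends to all of $\CC$ by polynomial continuation. Substituting back gives
\begin{equation*}
h(\xi,\ov\xi)=\xi^{d}\cdot A(\ov\xi/\xi)^{k}=A\,\xi^{d-k}\ov\xi^{k},
\end{equation*}
which is a monomial as claimed.

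I do not anticipate any serious obstacle; the only mildly delicate point is that one must first restrict to the open set where $P\ne 0$ in order to divide by $P^{2}$, after which the passage from the ODE solution $P=At^{C}$ back to all of $\CC$ is by polynomial continuation, and the requirement that the result be a polynomial pins down $C$ to a non-negative integer.
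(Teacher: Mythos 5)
Your argument is correct, and it takes a genuinely different route from the paper's. The paper argues by contradiction at the level of coefficients: it writes out the two lowest $\xi$-powers $\alpha\xi^j\ov\xi^h+\beta\xi^t\ov\xi^s$ appearing in $h$ (assuming $h$ is not a monomial, so $j<t$), computes the leading $\xi$-power in $hh_{\xi\ov\xi}-h_\xi h_{\ov\xi}$, and observes its coefficient equals $\alpha\beta(j-t)(h-s)\neq 0$, a contradiction. Your approach instead exploits homogeneity structurally: writing $h=\xi^dP(\ov\xi/\xi)$ converts the PDE into the single ODE $-PP'+t\bigl((P')^2-PP''\bigr)=0$, which after division by $P^2$ becomes $(tP'/P)'=0$, hence $tP'=CP$. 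Since this is a polynomial identity, comparing coefficients pins $P$ down to a single monomial $a_Ct^C$ (with $C$ a non-negative integer $\le d$), giving $h=a_C\xi^{d-C}\ov\xi^{C}$. The paper's proof is shorter and purely combinatorial; yours makes visible why the lemma should be true, namely that the equation is $(\log h)_{\xi\ov\xi}=0$ in disguise, so $h$ must factor as (function of $\xi$)$\times$(function of $\ov\xi$), and homogeneity then forces each factor to be a power. One minor point worth making explicit when you clean this up: the constant $C$ is well-defined because $\CC\setminus(\{P=0\}\cup\{0\})$ is connected, and it is cleanest to deduce the monomial form not via $P=At^C$ (which invites branch issues) but directly from the polynomial identity $tP'-CP=0$, which gives $(l-C)a_l=0$ for every coefficient $a_l$ of $P$.
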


\begin{proof} This lemma may be known to experts. We give a simple
proof here for convenience of a reader.
  Suppose that $h$ is not a monomial and takes the following form:
$$
h=\alpha \xi^j\ov{\xi}^h+\beta \xi^t\ov{\xi}^s+O(\xi^{t+1})\
\text{with}\ j<t,\ \alpha,\beta\neq 0.
$$
Here and in what follows, we write $O(\xi^k)$ for a homogeneous
polynomial with degree in $\xi$ at least $k$. Then
\begin{equation*}
\left(
\begin{array}{ll}
h & h_{\xi}\\
h_{\ov{\xi}}& h_{\xi\ov{\xi}}
\end{array}\right)=\left(
\begin{array}{ll}
\alpha \xi^j{\ov{\xi}}^h+\beta \xi^t{\ov{\xi}}^s+O(\xi^{t+1}) & j\alpha \xi^{j-1}{\ov{\xi}}^h+t\beta \xi^{t-1}{\ov{\xi}}^s+O(\xi^{t})\\
h\alpha \xi^j{\ov{\xi}}^{h-1}+s\beta
\xi^t{\ov{\xi}}^{s-1}+O(\xi^{t+1})& jh\alpha
\xi^{j-1}{\ov{\xi}}^{h-1}+ts\beta
\xi^{t-1}{\ov{\xi}}^{s-1}+O(\xi^{t})
\end{array}\right).
\end{equation*}
Thus
\begin{equation*}\begin{split}
hh_{\xi\ov{\xi}}-h_{\xi}h_{\ov{\xi}} =\alpha\beta
(ts+jh-th-js)\xi^{j+t-1}\ov{\xi}^{h+s-1}+O(\xi^{j+t}).
\end{split}\end{equation*}
On the other hand, $j+h=t+s$, $j<t$. Thus $j\neq t$ and $h\neq s$.
Hence $ts+jh-th-js=(j-t)(h-s)\neq 0$. Thus
$hh_{\xi\ov{\xi}}-h_{\xi}h_{\ov{\xi}}$ is not identically $0$, which
contradicts  our hypothesis in (\ref{heq}).
\end{proof}

\begin{lem}\label{lem4-tech}
  Let $f(z,\ov{z})$ be a  weighted   homogeneous polynomial (with any assigned positive weight on $z$) in $(z,\ov{z})\in \mathbb{C}^n\times \mathbb{C}^n$,
   which is holomorphic in its variable $z_j$ for each  $j\in [1, k]$ with $k\le n$.
    Assume that Re$f(z,\ov{z})$ is a
   plurisubharmonic function without non-trivial holomorphic  terms. Then $f(z,\ov{z})$ is independent of $z_1,\cdots, z_k$ and
   $\ov{z_1},\cdots
   \ov{z_k}$.
\end{lem}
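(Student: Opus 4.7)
The plan is to use plurisubharmonicity of $\mathrm{Re}\,f$ together with the holomorphy of $f$ in $z_1,\ldots,z_k$ to conclude $f$ depends on none of $z_1,\ldots,z_k$. Since $f$ is already holomorphic in those variables, the independence of $\bar z_1,\ldots,\bar z_k$ is then automatic.

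First I would expand $f$ in $\bar z$. The hypothesis $f_{\bar z_j}\equiv 0$ for $1\le j\le k$ gives
$$
f(z,\bar z)=\sum_{\beta}g_\beta(z)\,\bar z^\beta,
$$
where each multi-index $\beta$ is supported in $\{k+1,\ldots,n\}$ and each $g_\beta$ is a holomorphic polynomial on $\mathbb{C}^n$. The summand $g_0(z)$ is the purely holomorphic part of $f$ and contributes the holomorphic terms $\tfrac12 g_0(z)$ to $\mathrm{Re}\,f$. By the no-holomorphic-terms hypothesis, $g_0\equiv 0$.

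The main step is to show $f_{z_i\bar z_l}\equiv 0$ for every $i\le k$ and $l>k$. Set $L_{\mu\nu}=(\mathrm{Re}\,f)_{z_\mu\bar z_\nu}=\tfrac12\bigl(f_{z_\mu\bar z_\nu}+\overline{f_{z_\nu\bar z_\mu}}\bigr)$. For $i\le k$, $f_{\bar z_i}=0$ implies $f_{z_\mu\bar z_i}=0$ for all $\mu$; in particular $L_{ii}=0$ and $L_{il}=\tfrac12 f_{z_i\bar z_l}$ when $l>k$. The Hermitian matrix $(L_{\mu\nu})$ is positive semidefinite at every point; restricting to the $2\times2$ principal minor indexed by $\{i,l\}$, its determinant is $-|L_{il}|^2$, which must be $\ge 0$. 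Hence $L_{il}=0$, so $f_{z_i\bar z_l}\equiv 0$ on $\mathbb{C}^n$ for every $i\le k$, $l>k$.

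Finally, $f_{z_i\bar z_l}\equiv 0$ translates into $(g_\beta)_{z_i}\equiv 0$ for every $\beta$ with $\beta_l>0$; letting $l$ vary over $\{k+1,\ldots,n\}$ and recalling that every nonzero $\beta$ in our expansion has some $\beta_l>0$ with $l>k$, we see that each $g_\beta$ with $\beta\neq 0$ is independent of $z_1,\ldots,z_k$. Combined with $g_0\equiv 0$, this gives
$$
f(z,\bar z)=\sum_{\beta\neq 0}g_\beta(z_{k+1},\ldots,z_n)\,\bar z^\beta,
$$
as desired. The only genuinely delicate point is the extraction of the off-diagonal vanishing from plurisubharmonicity, but this is the standard fact that a positive semidefinite Hermitian matrix with a zero on the diagonal has zeros along the entire corresponding row and column; I anticipate no further obstruction.
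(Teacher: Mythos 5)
Your core mechanism --- taking the $2\times2$ principal minor of the complex Hessian of $\mathrm{Re}\,f$, using the vanishing diagonal entry $(\mathrm{Re}\,f)_{z_i\bar z_i}=0$ coming from holomorphy in $z_i$, and invoking positive semidefiniteness to kill the off-diagonal entry --- is exactly the paper's argument. The paper reduces to $k=1$ by induction while you treat all $z_1,\ldots,z_k$ simultaneously through the expansion $f=\sum_\beta g_\beta(z)\bar z^\beta$, but this is a cosmetic difference.

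There is, however, a gap in the step ``$g_0\equiv 0$.'' You assert that $\tfrac12 g_0(z)$ is the full holomorphic part of $\mathrm{Re}\,f$, but this is not so: whenever some $g_\beta$ with $\beta\neq 0$ is a nonzero constant $c_\beta$ (which happens precisely when $\bar z^\beta$ alone carries the full weighted degree $m$), the term $c_\beta\bar z^\beta$ of $f$ is purely anti-holomorphic and contributes $\tfrac12\bar c_\beta z^\beta$ to the holomorphic part of $\mathrm{Re}\,f$ through $\bar f$. So the no-holomorphic-terms hypothesis only forces $g_0(z)+\sum_\beta\bar c_\beta z^\beta=0$, not $g_0\equiv 0$. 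For instance, with $n=2$, $k=1$, the polynomial $f=-\bar c\,z_2+c\,\bar z_2$ satisfies every hypothesis (indeed $\mathrm{Re}\,f\equiv 0$) yet $g_0=-\bar c\,z_2\neq 0$. Your final conclusion survives anyway, since the cancelling monomials $z^\beta$ involve only $z_{k+1},\ldots,z_n$, hence $g_0$ remains independent of $z_1,\ldots,z_k$; but the intermediate claim should be weakened to this. The paper's proof sidesteps the subtlety by observing that the auxiliary holomorphic polynomial $g=f(z,\bar z)-f(0,z_2,\ldots,z_n,0,\bar z_2,\ldots,\bar z_n)$ vanishes at $z_1=0$ and is therefore divisible by $z_1$, so its holomorphic terms cannot be cancelled by anything coming from $f(0,\cdot)$, which is $z_1$-free.
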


\begin{proof}
 We need only to prove the lemma with  $k=1$ and the other case follows from an induction argument.
 Since Re$f(z,\ov{z})$ is  plurisubharmonic, for each $j$ with $2\leq j\leq n$, we have
\begin{equation}\begin{split}\label{pshin}
(2\text{Re}f)_{z_1\ov{z_1}}(2\text{Re}f)_{z_j\ov{z_j}}-(2\text{Re}f)_{z_1\ov{z_j}}(2\text{Re}f)_{z_j\ov{z_1}}\geq
0.
\end{split}\end{equation}
Since $f(z,\ov{z})$  is holomorphic in $z_1$, we have
$$
(2\text{Re}f)_{z_1\ov{z_1}}=0,\
(2\text{Re}f)_{z_1\ov{z_j}}=f_{z_1\ov{z_j}},\
(2\text{Re}f)_{z_j\ov{z_1}}=\ov{f}_{z_j\ov{z_1}}.
$$

Substituting these relations back to (\ref{pshin}), we obtain
$-|f_{z_1\ov{z_j}}|^2\geq 0$. Thus $f_{z_1\ov{z_j}}\equiv0$. Since
$f(z,\ov{z})$ is holomorphic in $z_1$, we see that
$$g(z,\ov{z})=f(z,\ov{z})-f(0,z_2,\cdots,z_{n},0,\ov{z_2},\cdots,\ov{z_n})$$
is a holomorphic function. By our assumption,
$$Ref=Reg(z,\ov{z})+Ref(0,z_2,\cdots,z_{n},0,\ov{z_2},\cdots,\ov{z_n})$$
contains no non-trivial holomorphic terms. Hence $g(z,\ov{z})\equiv
0$, which implies that $f(z,\ov{z})$ is independent of $z_1$ and
$\ov{z_1}$.
\end{proof}

\begin{lem}\label{lem2-tech}
  Let $h(z,\ov{z})=\sum_{I\ov{J}}a_{I\ov{J}}z^I\ov{z^J}$ be a   real nonzero  plurisubharmonic  polynomial
  in $(z,\ov{z})\in \mathbb{C}^n\times
   \mathbb{C}^n,$
where $I=(i_1,\cdots,i_n), J=(j_1,\cdots,j_n)$ with  $i_l+j_l$ being
a fixed positive integer (independent of $I,J$) denoted by $k_l$ for
each $l\in[1, n]$. Assume that $h_{z_1\ov{z_1}}\not \equiv 0$. Then
    each $k_l$ is even and the coefficient of $\Pi_{l=1}^{n} |z_l|^{k_l}$ is positive.
\end{lem}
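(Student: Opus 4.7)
The plan is to split the proof into two parts: show every $k_l$ is even, and then show the coefficient $a := a_{(k/2)\ov{(k/2)}}$ of $\prod_l |z_l|^{k_l}$ is strictly positive. Both steps will rest on interpreting $a\prod_l|z_l|^{k_l}$ as the torus average of $h$ under the action $z_l\mapsto e^{i\theta_l}z_l$, so that sign statements about $a$ translate into sub-mean statements for the plurisubharmonic $h$.

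For the first part, fix all variables other than $z_l$ and write $z_l = re^{i\theta}$. Bihomogeneity of $h$ in $(z_l,\ov{z_l})$ of total degree $k_l$ gives $h = r^{k_l}\eta_l(\theta)$, where $\eta_l(\theta) = \sum_{i_l+j_l=k_l}c_{i_lj_l}e^{i(i_l-j_l)\theta}$ with coefficients $c_{i_lj_l}$ depending on the remaining variables. The polar form of the Laplacian yields $\Delta_{z_l}h = r^{k_l-2}(\eta_l''+k_l^2\eta_l)$, so subharmonicity of $h$ in $z_l$ is equivalent to $\eta_l''+k_l^2\eta_l \geq 0$. Suppose $k_l$ were odd; then every exponent $i_l-j_l$ is odd, so the $\theta$-mean of the non-negative quantity $\eta_l''+k_l^2\eta_l = \sum(k_l^2-(i_l-j_l)^2)c_{i_lj_l}e^{i(i_l-j_l)\theta}$ vanishes, hence the quantity itself vanishes identically, forcing $c_{i_lj_l}=0$ whenever $|i_l-j_l|<k_l$. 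Thus
\begin{equation*}
h = z_l^{k_l}A_l(z^{(l)},\ov{z^{(l)}}) + \ov{z_l}^{k_l}B_l(z^{(l)},\ov{z^{(l)}}),
\end{equation*}
where $z^{(l)}$ denotes the remaining variables. For $l=1$ this directly gives $h_{z_1\ov{z_1}}\equiv 0$, contradicting the hypothesis. For $l\geq 2$, we now have $h_{z_l\ov{z_l}}\equiv 0$, so positivity of the $(z_1,z_l)$ block of the complex Hessian of $h$ forces $|h_{z_1\ov{z_l}}|^2 \leq h_{z_1\ov{z_1}}\cdot 0 = 0$; expanding $h_{z_1\ov{z_l}} = k_l\ov{z_l}^{k_l-1}\partial_{z_1}B_l$ gives $\partial_{z_1}B_l\equiv 0$, and symmetrically $\partial_{\ov{z_1}}A_l\equiv 0$. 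Hence $A_l$ is holomorphic in $z_1$ and $B_l$ is antiholomorphic in $z_1$, so $h_{z_1\ov{z_1}}\equiv 0$ again, a contradiction. Therefore every $k_l$ is even.

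With each $k_l$ even, the monomial $\prod_l z_l^{k_l/2}\ov{z_l}^{k_l/2}$ is the unique bihomogeneous monomial of multidegrees $(k_1,\ldots,k_n)$ invariant under the torus action, so averaging yields
\begin{equation*}
M(r) := \frac{1}{(2\pi)^n}\int_{\mathbb{T}^n} h(r_1 e^{i\theta_1},\ldots,r_n e^{i\theta_n})\,d\theta = a\prod_l r_l^{k_l}.
\end{equation*}
Since each $k_l\geq 1$, every monomial of $h$ involves $z_l$ or $\ov{z_l}$, so $h$ vanishes on each coordinate hyperplane $\{z_l=0\}$. Using subharmonicity of $h$ in $z_1$ together with $h|_{z_1=0}=0$, the circular mean $M_1(r_1,z') := (2\pi)^{-1}\int h(r_1 e^{i\theta_1},z')\,d\theta_1$ is $\geq 0$; averaging over the remaining torus in $z'$ then gives $M(r)\geq 0$, hence $a\geq 0$. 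If $a=0$, then $M_1$ is a non-negative polynomial whose torus integral in $z'$ vanishes, so $M_1\equiv 0$; for each fixed $z'$ this says the circular mean of the subharmonic function $z_1\mapsto h(z_1,z')$ is constant in $r_1$, which forces it to be harmonic in $z_1$. Equivalently, $h_{z_1\ov{z_1}}\equiv 0$, contradicting the hypothesis. Therefore $a>0$.

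The main obstacle, as I anticipate it, is the case $k_l$ odd with $l\geq 2$: subharmonicity in a single variable is not enough by itself, and one has to bring in the full plurisubharmonicity via the $2\times 2$ complex Hessian block on the $(z_1,z_l)$-plane to convert the forced holomorphic-plus-antiholomorphic structure in $z_l$ into the vanishing $h_{z_1\ov{z_1}}\equiv 0$ that contradicts the hypothesis; the positivity step $a>0$ is then a fairly clean equality-case argument for the sub-mean inequality.
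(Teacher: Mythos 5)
Your proof is correct, and the positivity step at the end is essentially the paper's argument (torus averaging to isolate the coefficient of $\prod_l |z_l|^{k_l}$, and using the equality case of the sub--mean inequality to contradict $h_{z_1\ov{z_1}}\not\equiv 0$); the paper simply integrates $h_{z_1\ov{z_1}}$ directly over polydiscs rather than integrating $h$ itself over circles and differentiating, but the two bookkeeping schemes extract the same coefficient and use the same positivity. Where you genuinely diverge is the evenness step. The paper's proof is far shorter there: since $h_{z_1\ov{z_1}}\geq 0$ is a real bihomogeneous polynomial of degree $k_l$ in $(z_l,\ov{z_l})$ for $l\geq 2$ (and degree $k_1-2$ in $(z_1,\ov{z_1})$), the substitution $z_l\mapsto -z_l$ shows that if any $k_l$ were odd, $h_{z_1\ov{z_1}}$ would be odd under that substitution, hence simultaneously $\geq 0$ and $\leq 0$, hence identically $0$ --- the same contradiction without ever touching $h_{z_l\ov{z_l}}$ for $l\geq 2$, the Fourier decomposition of the single--variable Laplacian, or the $2\times 2$ Hessian minor. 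Your route --- forcing, when $k_l$ is odd, $h = z_l^{k_l}A_l + \ov{z_l}^{k_l}B_l$ from $\eta_l'' + k_l^2\eta_l\geq 0$ having zero mean, and then using the $(z_1,z_l)$ block of the Levi form to kill the mixed derivatives --- is valid but uses more of the plurisubharmonicity hypothesis than the statement actually requires. Worth noticing: for the evenness conclusion you really only need the single inequality $h_{z_1\ov{z_1}}\geq 0$, and the parity trick gets you there in one line.
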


\begin{proof}
By the plurisubharmonicity  of $h(z,\ov{z})$, we know
$h_{z_1\ov{z_1}}\geq 0$. Since $h_{z_1\ov{z_1}}\not \equiv 0$, each
$k_j$ is even. Write $z_i=r_ie^{i\theta_i}$, then for any $R_i\in
(0,\infty)$, we have
\begin{equation}\begin{split}
&\frac{1}{(2\pi)^n}\int_0^{R_1}\cdots \int_0^{R_n}\cdot \int_0^{2\pi}\cdots \int_0^{2\pi} h_{z_1\ov{z_1}}dr_1\cdots dr_n d\theta_1 \cdots d\theta_n\\
=&\text{the coefficient of } \Pi |z_j|^{k_i} \cdot \text{some
positive constant} \geq 0.
\end{split}\end{equation}
If the coefficient of $\Pi_{j=1}^{n} |z_j|^{k_j}$ is $0$, then the
above integral is $0$. Combining with $h_{z_1\ov{z_1}}\geq 0$, we
obtain $h_{z_1\ov{z_1}}\equiv 0$. This contradicts our assumption
that $h_{z_1\ov{z_1}}\not \equiv 0. $ This proves Lemma
\ref{lem2-tech}.
\end{proof}

\begin{lem}\label{mainlem-tech}
  Let $B(z_1,\ov{z_1})$, $f(z_2,\ov{z_2})$ and $g(z_2,\ov{z_2})$ be three
   homogeneous polynomials of degree $k\ge 1$, $m\ge 1$ and $m\ge 1$, respectively, in the ordinary sense
    with $B(z_1,\ov{z_1})\not\equiv 0$, $f(z_2,\ov{z_2})\not\equiv 0$. Suppose that
  $B(z_1,0)=B(0,\ov{z_1})=0$. Suppose that $F=Bf+z_1^kg$ with Re$F$
  being
   a non zero plurisubharmonic polynomial
   without any non-trivial holomorphic term. Then $k$ and $m$ are even and
    Re$F=\alpha |z_1|^{k}|z_2|^m$ for some $\alpha> 0$.
\end{lem}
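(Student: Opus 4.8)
The plan is to exploit plurisubharmonicity of $\text{Re}\,F$ in the two variables $z_1,z_2$ in two directions. First I would record that $F=Bf+z_1^kg$ with $B,f,g$ homogeneous of degrees $k,m,m$ in the ordinary sense, so $F$ is homogeneous of total degree $k+m$; in particular $\text{Re}\,F$ is a homogeneous (hence polynomial) plurisubharmonic function, and by Lemma \ref{lem2-tech} applied with the bidegrees of $z_1$ and $z_2$ fixed, once we know the relevant pure mixed-derivative term is nonzero, $k$ and $m$ are forced to be even and the coefficient of $|z_1|^k|z_2|^m$ is positive. So the real work is to show that $\text{Re}\,F$ actually equals $\alpha|z_1|^k|z_2|^m$, i.e. that all other monomials vanish.

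The main step is to compare $F_{z_1\ov{z_1}}$ and $F_{z_2\ov{z_2}}$ against the off-diagonal entry $F_{z_1\ov{z_2}}$ via the inequality
\begin{equation*}
(2\text{Re}\,F)_{z_1\ov{z_1}}\,(2\text{Re}\,F)_{z_2\ov{z_2}}-\big|(2\text{Re}\,F)_{z_1\ov{z_2}}\big|^2\ge 0,
\end{equation*}
exactly as in the proof of Lemma \ref{lem4-tech}. Here I would split $F$ into its holomorphic-in-$z_1$ part and the rest. Write $F = z_1^k g(z_2,\ov{z_2}) + B(z_1,\ov{z_1}) f(z_2,\ov{z_2})$. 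The term $z_1^k g$ is holomorphic in $z_1$, so it does not contribute to $(2\text{Re}\,F)_{z_1\ov{z_1}}$; that derivative comes only from $B(z_1,\ov{z_1})f(z_2,\ov{z_2})$, and since $B(z_1,0)=B(0,\ov z_1)=0$, $B$ is divisible by $|z_1|^2$, so $B_{z_1\ov{z_1}}$ is a nonzero homogeneous polynomial of degree $k-2$ in $(z_1,\ov z_1)$. Thus $(2\text{Re}\,F)_{z_1\ov{z_1}} = 2\,\text{Re}\big(B_{z_1\ov{z_1}} f\big)$ up to the obvious symmetrisation, a genuine function of both variables.

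The crux is then to run the Cauchy–Schwarz-type inequality to force $f$ (and likewise $g$) to be monomials, and to pin down $B$. Concretely: freeze $z_2$ and regard the $2\times 2$ complex Hessian of $2\text{Re}\,F$ as a polynomial identity in $z_1,\ov{z_1}$; the leading behaviour in $z_1$ of the determinant must be nonnegative, and by matching top-degree coefficients (the same bookkeeping as in Lemma \ref{lem1-teh}) one gets that the ``cross'' terms can be killed only if $B$ is (a constant times) $|z_1|^k$ and $f$ is a monomial $c\,z_2^{a}\ov{z_2}^{m-a}$; positivity of the diagonal then forces $a=m/2$, i.e. $f$ is a multiple of $|z_2|^m$, hence $m$ even. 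A symmetric argument — now using that $F$ is \emph{not} holomorphic and that $\text{Re}\,F$ has no nontrivial holomorphic terms, so the $z_1^k g$ piece cannot survive on its own — shows $z_1^k g$ must combine with $B f$ to leave only $|z_1|^k|z_2|^m$: since $B = \gamma|z_1|^k$ contributes $\gamma|z_1|^k f$, and $\text{Re}(z_1^kg)$ is the remaining part, plurisubharmonicity of the sum together with the no-holomorphic-terms hypothesis and Lemma \ref{lem2-tech} (applied in $z_1$ and in $z_2$) force $\text{Re}(z_1^k g) + \text{Re}(\gamma|z_1|^k f) = \alpha|z_1|^k|z_2|^m$ with $\alpha>0$. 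Finally, invoking Lemma \ref{lem2-tech} one last time with $k_1=k$, $k_2=m$ confirms $k,m$ even and $\alpha>0$, completing the proof.

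The step I expect to be the main obstacle is the bookkeeping that turns the single scalar inequality $\det(\text{complex Hessian})\ge 0$ into the rigid conclusion that $f$ and $B$ are monomials: a priori the inequality only constrains a combination of many coefficients, and one has to choose the right specialisations (e.g. restricting to real $z_1$, or to curves $z_2$ fixed and $z_1$ varying, or integrating over tori as in Lemma \ref{lem2-tech}) to extract coefficient-by-coefficient vanishing. The homogeneity of everything in sight, and the fact that $B$ vanishes on both axes (so carries a clean factor $|z_1|^2$), are what make this tractable, and the already-proved Lemma \ref{lem1-teh} is essentially the one-variable prototype of exactly this extraction.
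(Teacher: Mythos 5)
Your overall strategy is the paper's: establish evenness and positivity of the $|z_1|^k|z_2|^m$ coefficient via the torus-averaging Lemma \ref{lem2-tech}, then exploit $(\mathrm{Re}F)_{z_1\ov{z_1}}(\mathrm{Re}F)_{z_2\ov{z_2}}-|(\mathrm{Re}F)_{z_1\ov{z_2}}|^2\ge 0$ with coefficient bookkeeping and the monomial rigidity of Lemma \ref{lem1-teh}. But the pivotal step as you state it is false, so the plan would break exactly where you predict the ``main obstacle'' lies. You claim the determinant inequality forces $B$ to be a constant multiple of $|z_1|^k$ and $f$ to be a monomial $c\,z_2^{a}\ov{z_2}^{m-a}$ with $a=m/2$. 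Neither is implied by the hypotheses: take $g=0$, $f=|z_2|^2+c z_2^2-\ov{c}\,\ov{z_2}^2$ (so $f$ is not a monomial, but $f+\ov f=2|z_2|^2$), $B=|z_1|^2$; or $B=|z_1|^4+2i\gamma |z_1|^2\mathrm{Re}(z_1^2)$ (not a constant times $|z_1|^4$, with $B(z_1,0)=B(0,\ov{z_1})=0$) and $f=|z_2|^m$. In both cases every hypothesis of the lemma holds and $\mathrm{Re}\,F=\alpha|z_1|^k|z_2|^m$, yet $B$ and $f$ are not of your rigid form. Only the symmetrized combinations are constrained; this is why the paper's proof does not conclude monomiality of $B$ or $f$ but instead extracts from the vanishing of the coefficient of $|z_1|^{2k-2}|z_2|^{2m-2}$ in the Hessian determinant the anti-Hermitian relations $B_{hj}f_{st}=-\ov{B_{jh}f_{ts}}$ (for $js\neq ht$), and then runs a case analysis (applying Lemma \ref{lem1-teh} once in $z_1$ and once in $z_2$ to the lowest-order slice of $\mathrm{Re}\,F$) to show that only $\mathrm{Re}\,F$, not $B$ and $f$ individually, collapses to $\alpha|z_1|^k|z_2|^m$.

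A second, related gap is your treatment of $g$. The no-holomorphic-terms hypothesis only kills the pure coefficient $g_{m0}$ (since $z_1^kg_{m0}z_2^m$ is holomorphic and $Bf$ contributes no holomorphic monomials); the mixed coefficients $g_{ts}$ with $s\ge 1$ are not touched by that hypothesis, and saying ``the $z_1^kg$ piece cannot survive on its own'' does not eliminate them. In the paper they die only because they enter the top coefficient of the Hessian determinant through the manifestly nonpositive terms $-k^2s^2|g_{ts}|^2$, so that the torus-averaging argument forces $g_{ts}=0$ for $s\neq 0$. Your sketch needs this explicit computation (or an equivalent mechanism); without it, and without replacing the false monomiality claim by the weaker coefficient relations above, the proposed route cannot be completed.
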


\begin{proof} By the assumption that Re$F$ is  non-zero and plurisubharmonic, $\left(Re(F)\right)_{z_1\ov{z_1}}\geq 0$.
Since $B(z_1,0)=B(0,\ov{z_1})=0$ and Re$F$ contains no non-trivial
holomorphic terms, one  further concludes that
$\left(Re(F)\right)_{z_1\ov{z_1}}$ is not identically $0$. By Lemma
\ref{lem2-tech}, $m$ and $k$ are even. Set $k=2k_3$ and $m=2m_3$.
Write
$$
B=\sum\limits_{j+h=k} B_{jh}z_1^j\ov{z_1}^h,\ f=\sum_{t+s=m}
f_{ts}z_2^t\ov{z_2}^s,\ g=\sum_{t+s=m} g_{ts}z_2^t\ov{z_2}^s.
$$

First we claim that $B_{k_3k_3}\neq 0$ and $f_{m_3m_3}\neq 0$.
Otherwise the coefficient of the $|z_1|^{2k_3-2}|z_2|^{2m_3}$ in
$\left(Re(F)\right)_{z_1\ov{z_1}}$ is zero, and thus by Lemma
\ref{lem2-tech}, we reach a contradiction.
After writing $F=cB\cdot \frac{1}{c}f+z_1^kg$, we  can assume that
$B_{k_3k_3}=1$.
\bigskip

By the plurisubharmonicity of Re$(F)$, we have
\begin{equation}\begin{split}
(\text{Re}F)_{z_1\ov{z_1}}(\text{Re}F)_{z_2\ov{z_2}}-(\text{Re}F)_{z_1\ov{z_2}}(\text{Re}F)_{z_2\ov{z_1}}\geq
0.
\end{split}\end{equation}

Notice that
\begin{equation}\begin{split}
2(\text{Re}F)_{z_1\ov{z_1}}=B_{z_1\ov{z_1}}f+\ov{B}_{z_1\ov{z_1}}\ov{f},\
2(\text{Re}F)_{z_2\ov{z_2}}=Bf_{z_2\ov{z_2}}+\ov{B}\ov{f}_{z_2\ov{z_2}}+2\text{Re}(z_1^kg_{z_2\ov{z_2}}).
\end{split}\end{equation}
Thus
\begin{equation}\begin{split}
4(\text{Re}F)_{z_1\ov{z_1}}(\text{Re}F)_{z_2\ov{z_2}}
=2\text{Re}\Big(BB_{z_1\ov{z_1}}ff_{z_2\ov{z_2}}+\ov{B}B_{z_1\ov{z_1}}f\ov{f}_{z_2\ov{z_2}}
+B_{z_1\ov{z_1}}f\cdot2\text{Re}(z_1^kg_{z_2\ov{z_2}}) \Big).
\end{split}\end{equation}

The coefficients of $|z_1|^{2k-2}$  in $BB_{z_1\ov{z_1}}$ and
$\ov{B}B_{z_1\ov{z_1}}$ are, respectively,
$$
\sum\limits_{j+h=k}jhB_{jh}B_{hj},\ \sum\limits_{j+h=k}jh|B_{hj}|^2.
$$

The coefficients of $|z_2|^{2m-2}$  in $ff_{z_2\ov{z_2}}$ and
$f\ov{f}_{z_2\ov{z_2}}$  are, respectively,
$$
\sum\limits_{t+s=m}tsf_{ts}f_{st},\ \sum\limits_{t+s=m}ts|f_{ts}|^2
$$

Notice that $B_{z_1\ov{z_1}}f\cdot\text{Re}(z_1^kg_{z_2\ov{z_2}})$
is not divisible by $|z_1|^{2k-2}$ (unless is is identically zero).
Hence the coefficient of $|z_1|^{2k-2}|z_2|^{2m-2}$  in
$4(\text{Re}f)_{z_1\ov{z_1}}(\text{Re}f)_{z_2\ov{z_2}}$ is
\begin{equation}\begin{split}
\sum\limits_{j+h=k,t+s=m}2\text{Re}\Big(jhB_{jh}B_{hj}tsf_{ts}f_{st}+jh|B_{hj}|^2ts|f_{ts}|^2\Big).
\end{split}\end{equation}

We similarly compute the coefficient of $|z_1|^{2k-2}|z_2|^{2m-2}$
in $4(\text{Re}F)_{z_1\ov{z_2}}(\text{Re}F)_{z_2\ov{z_1}}$ as
follows:
\begin{equation}\begin{split}
2(\text{Re}F)_{z_1\ov{z_2}}=&B_{z_1}f_{\ov{z_2}}+\ov{B}_{z_1}\ov{f}_{\ov{z_2}}+kz_1^{k-1}g_{\ov{z_2}},\\
2(\text{Re}F)_{z_2\ov{z_1}}=&B_{\ov{z_1}}f_{{z_2}}+\ov{B}_{\ov{z_1}}\ov{f}_{{z_2}}+k\ov{z_1}^{k-1}\ov{g}_{z_2}.
\end{split}\end{equation}
Thus
\begin{equation}\begin{split}
4(\text{Re}F)_{z_1\ov{z_2}}(\text{Re}F)_{z_2\ov{z_1}}
=&B_{z_1}B_{\ov{z_1}}f_{z_2}f_{\ov{z_2}}+B_{z_1}\ov{B}_{\ov{z_1}}\ov{f}_{z_2}f_{\ov{z_2}}
+B_{\ov{z_1}}\ov{B}_{z_1}f_{z_2}\ov{f}_{\ov{z_2}}+\ov{B}_{z_1}\ov{B}_{\ov{z_1}}\ov{f}_{\ov{z_2}}\ov{f}_{z_2}\\
&+2\text{Re}\Big(kz_1^{k-1}g_{\ov{z_2}}(B_{\ov{z_1}}f_{{z_2}}+\ov{B}_{\ov{z_1}}\ov{f}_{{z_2}})\Big)+k^2|z_1|^{2k-2}|g_{\ov{z_2}}|^2.
\end{split}\end{equation}

The coefficients of $|z_1|^{2k-2}$ in $B_{z_1}B_{\ov{z_1}}$,
$B_{z_1}\ov{B}_{\ov{z_1}}$, $B_{\ov{z_1}}\ov{B}_{{z_1}}$ and
$\ov{B}_{z_1}\ov{B}_{\ov{z_1}}$ are, respectively
$$
\sum\limits_{j+h=k}h^2B_{hj}B_{jh}, \
\sum\limits_{j+h=k}h^2|B_{hj}|^2,\ \sum\limits_{j+h=k}j^2|B_{hj}|^2,
\ \sum\limits_{j+h=k}j^2\ov{B_{hj}}\ov{B_{jh}}.
$$

The coefficients of $|z_2|^{2m-2}$  in $f_{\ov{z_2}}f_{z_2}$,
$f_{\ov{z_2}}\ov{f}_{z_2}$, $f_{{z_2}}\ov{f}_{\ov{z_2}}$ and
$\ov{f}_{\ov{z_2}}\ov{f}_{z_2}$ are, respectively,
$$
\sum\limits_{t+s=m}s^2f_{ts}f_{st},\
\sum\limits_{t+s=m}s^2|f_{ts}|^2,\
\sum\limits_{t+s=m}t^2|f_{ts}|^2,\
\sum\limits_{t+s=m}t^2\ov{f_{ts}}\ov{f_{st}}.
$$

Notice that
$kz_1^{k-1}g_{\ov{z_2}}(B_{\ov{z_1}}f_{{z_2}}+\ov{B}_{\ov{z_1}}\ov{f}_{{z_2}})$
is not divisible by $|z_1|^{2k-2}$ (when not identically zero).
Hence the coefficient of $|z_1|^{2k-2}|z_2|^{2m-2}$  in
$4(\text{Re}F)_{z_1\ov{z_2}}(\text{Re}F)_{z_2\ov{z_1}}$ is
\begin{equation*}\begin{split}
\sum\limits_{j+h=k,t+s=m}\Big(&h^2B_{jh}B_{hj}s^2f_{ts}f_{st}+h^2|B_{hj}|^2s^2|f_{ts}|^2+
j^2|B_{hj}|^2t^2|f_{ts}|^2\\
&+j^2\ov{B}_{jh}\ov{B}_{hj}t^2\ov{f}_{ts}\ov{f}_{st}\Big)+\sum\limits_{t+s=m}
k^2s^2|g_{ts}|^2.
\end{split}\end{equation*}

Hence the coefficient of $|z_1|^{2k-2}|z_2|^{2m-2}$  in
$4(\text{Re}F)_{z_1\ov{z_1}}(\text{Re}F)_{z_2\ov{z_2}}-4(\text{Re}F)_{z_1\ov{z_2}}(\text{Re}F)_{z_2\ov{z_1}}$
is
\begin{equation*}\begin{split}
&\sum\limits_{j+h=k,t+s=m}\Big\{2\text{Re}\big(jhB_{jh}B_{hj}tsf_{ts}f_{st}+jh|B_{hj}|^2ts|f_{ts}|^2\big)
-\big(h^2B_{hj}B_{jh}s^2f_{ts}f_{st}\\
&+h^2|B_{hj}|^2s^2|f_{ts}|^2+j^2|B_{hj}|^2t^2|f_{ts}|^2+j^2\ov{B}_{jh}\ov{B}_{hj}t^2\ov{f}_{ts}\ov{f}_{st}\big)\Big\}
-\sum\limits_{t+s=m} k^2s^2|g_{ts}|^2\\
=&-\sum\limits_{j+h=k,t+s=m}\Big\{(hs-jt)^2|B_{hj}|^2|f_{ts}|^2+hs(hs-jt)B_{jh}B_{hj}f_{ts}f_{st}\\
&+jt(jt-hs)\ov{B}_{jh}\ov{B}_{hj}\ov{f}_{ts}\ov{f}_{st}\Big\}-\sum\limits_{t+s=m} k^2s^2|g_{ts}|^2\\
=&-\sum\limits_{h\leq j, t\leq
s}\Gamma_{hj}^{ts}\Big\{(hs-jt)^2|B_{hj}|^2|f_{ts}|^2+(js-ht)^2|B_{jh}|^2|f_{ts}|^2
+(ht-js)^2|B_{hj}|^2|f_{st}|^2\\&
+(jt-hs)^2|B_{jh}|^2|f_{st}|^2+\Big(hs(hs-jt)+js(js-ht)+ht(ht-js)+jt(jt-hs)\Big)B_{jh}B_{hj}f_{ts}f_{st}\\
&+\Big(jt(jt-hs)+ht(ht-js)+js(js-ht)+hs(hs-jt)\Big)\ov{B}_{jh}\ov{B}_{hj}\ov{f}_{ts}\ov{f}_{st}\Big\}
-\sum_{t+s=m}  k^2s^2|g_{ts}|^2\\
=&-\sum\limits_{h\leq j, t\leq
s}\Gamma_{hj}^{ts}\Big\{(hs-jt)^2|B_{hj}|^2|f_{ts}|^2+(js-ht)^2|B_{jh}|^2|f_{ts}|^2
+(ht-js)^2|B_{hj}|^2|f_{st}|^2\\&
+(jt-hs)^2|B_{jh}|^2|f_{st}|^2+\Big((hs-jt)^2+(js-ht)^2\Big)B_{jh}B_{hj}f_{ts}f_{st}\\
&+\Big((ht-js)^2+(jt-hs)^2\Big)
\ov{B}_{jh}\ov{B}_{hj}\ov{f}_{ts}\ov{f}_{st}\Big\}-\sum_{t+s=m}
k^2s^2|g_{ts}|^2.
\end{split}\end{equation*}
Here we have set
\begin{equation*}\begin{split}
\Gamma_{hj}^{ts}=\left\{\begin{array}{ll} 1 & h<j,\ t<s,\\
\frac{1}{2} & h=j,\ t<s\ \text{or}\ h<j,\ t=s,\\ 0 & h=j,\ t=s.
\end{array}\right.
\end{split}\end{equation*}

 Notice, by the H\"older inequality, that
\begin{equation}\begin{split}
&|\Big((js-ht)^2+(hs-jt)^2\Big)B_{jh}B_{hj}f_{ts}f_{st}+\Big((ht-js)^2+(jt-hs)^2\Big)
\ov{B}_{jh}\ov{B}_{hj}\ov{f}_{ts}\ov{f}_{st}|\\
&\leq
(js-ht)^2(|B_{hj}f_{st}|^2+|B_{jh}f_{ts}|^2)+(jt-hs)^2(|B_{hj}f_{ts}|^2+|B_{jh}f_{st}|^2).
\end{split}\end{equation}
Thus we obtain  the coefficient of  $|z_1|^{2k-2}|z_2|^{2m-2}$  in
$$4(\text{Re}F)_{z_1\ov{z_1}}(\text{Re}F)_{z_2\ov{z_2}}-4(\text{Re}F)_{z_1\ov{z_2}}(\text{Re}F)_{z_2\ov{z_1}}$$
is non positive. Furthermore, this coefficient is $0$ if and only if
for $h\leq j$, $t\leq s$ and for any $j^*+l^*=m-1$ with $l^*\not
=0$:
\begin{equation}\begin{split}\label{cond}
B_{hj}f_{st}=-\ov{B_{jh}f_{ts}}\ \text{for}\ js\neq ht,\
B_{jh}f_{ts}=-\ov{B_{hj}f_{st}}\ \text{for}\ jt\neq hs,\
g_{j^*l^*}=0.
\end{split}\end{equation}

Repeating the argument in the proof of Lemma \ref{lem2-tech}, we
conclude that (\ref{cond}) holds and moreover
\begin{equation}\begin{split}\label{je0}
(\text{Re}F)_{z_1\ov{z_1}}(\text{Re}F)_{z_2\ov{z_2}}-(\text{Re}F)_{z_1\ov{z_2}}(\text{Re}F)_{z_2\ov{z_1}}=0.
\end{split}\end{equation}
Since $ReF$ and $Bf$  contain no non-trivial holomorphic terms,
we see $g\equiv 0$. Setting $j=h=k_3$ in (\ref{cond}) and using the
normalization that $B_{k_3k_3}=1$, we obtain $f_{ts}=-\ov{f_{st}}$
for $t\neq s$.


Now, if $f$ is of the form $f=f_{m_3m_3} |z_2|^m$ and
Re$F=|z_2|^mp(z_1,\ov{z_1})$, then (\ref{je0}) is equivalent to
$$
pp_{z_1\ov{z_1}}-p_{z_1}p_{\ov{z_1}}=0.
$$
By Lemma \ref{lem1-teh}, $p$ is a monomial. On the other hand, since
$p$ is real valued, $p=\alpha |z_1|^k$ for some $\alpha>0$. Namely,
Re$F=\alpha |z_1|^k|z_2|^m$. This proves the lemma.

For the rest of the proof, we  suppose that $f$ is not of the form
$f=f_{m_3m_3} |z_2|^m$. Since $f_{m_3m_3}\neq 0$,  $f$ is not a
monomial.

Now suppose
$$
\text{Re}F=z_1^h\ov{z_1}^jq(z_2,\ov{z_2})+O(z_1^{h+1}), \ q\not = 0.
$$
Since $B(z_1,0)=B(0,z_1)=0$, we have $h,j\geq 1$.  From (\ref{je0}),
we get
$$
hjz_1^{2h-1}\ov{z_1}^{2j-1}(qq_{z_2\ov{z_2}}-q_{z_2}q_{\ov{z_2}})+O(z_1^{2h})=0.
$$
This gives
$$
qq_{z_2\ov{z_2}}-q_{z_2}q_{\ov{z_2}}=0,
$$
which further forces  $q$ to be a monomial.

\bigskip

(1) If $B_{hj}=0$ or $B_{jh}=0$, then
$q=\frac{1}{2}\ov{B_{jh}}\ov{f}$ or $q=\frac{1}{2}B_{hj}{f}$,
respectively. In either case, $q$ is not a monomial and thus we
reach a contradiction.
\medskip

(2) Assume that $B_{hj}\neq 0$, $B_{jh}\neq 0$ and  $h< j$. In this
case, $B_{hj}f_{m_3m_3}=-\ov{B_{jh}f_{m_3m_3}}$. Hence there is no
term of the form $\gamma|z_2|^m$ in $q$. Setting $h=j$ in
(\ref{cond}), we see $f_{ts}=-\ov{f_{st}}$ for $t\neq s$. Thus
$B_{hj}f_{m_3m_3}|z_2|^m+\ov{B_{jh}f_{m_3m_3}|z_2|^m}=0$ and
$f-f_{m_3m_3} |z_2|^m=-\ov{(f-f_{m_3m_3} |z_2|^m)}$.
 Hence Re$F$ can be computed as follows:
\begin{equation}\begin{split}
Re(F)=&\frac{1}{2}(B_{hj}f+\ov{B_{jh}}\ov{f})z_1^h\ov{z_1}^j+O(z_1^{h+1})\\
=&\frac{1}{2}(B_{hj}-\ov{B_{jh}})z_1^h\ov{z_1}^j\cdot (f-f_{m_3m_3}
|z_2|^m)+O(z_1^{h+1}).
\end{split}\end{equation}

Thus we conclude that
$q=\frac{1}{2}(B_{hj}-\ov{B_{jh}})(f-f_{m_3m_3} |z_2|^m)$, which can
not be a monomial and thus gives a contradiction.

Hence we must have $h\geq j$. But from the reality of Re$F$, we see
that $B=|z_1|^k$ and  Re$F$ takes the form
$\frac{1}{2}|z_1|^k(f(z_2,\ov{z_2})+\ov{f(z_2,\ov{z_2})})$. Since
$f_{ts}=-\ov{f_{st}}$ for $s\neq t$,
we conclude that Re$F$ takes the form $\alpha |z_1|^k|z_2|^m$
with $\alpha>0$. This finally completes the proof of the lemma.
\end{proof}

\section{Proof of Theorem \ref{main-tech}}

\noindent  In this section, we provide a detailed proof of Theorem
\ref{main-tech}, which played a key role in the proof of our main
theorem. We write $z=(z_1,z_2)$ for the coordinates in ${\mathbb
C}^2$ in this section.

\begin{thm}\label{main-tech}
  Define the weight of $z_1$ and $\ov{z_1}$ to be $1$,  the weight of $z_2$ and $\ov{z_2}$ to be $k\in {\mathbb N}$ with $k>1$.
  Let  $A=A(z_1,\ov{z_1})$ be a homogenous polynomial of degree $k-1$ in $(z_1,\ov{z_1})$ without
   holomorphic terms. Suppose
  that $f$ is a weighted homogeneous polynomial in $(z,\ov{z})$ of
  weighted degree $m>k$. Further assume that  $\hbox{Re}(f)$ is plurisubharmonic, contains no
  non-trivial holomorphic terms and assume that $f$  satisfies the following equation:
\begin{equation}\begin{split}\label{be}
f_{\ov{z_1}}(z,\ov{z})+\ov{A(z_1,\ov{z_1})}f_{\ov{z_2}}(z,\ov{z})=0.
\end{split}\end{equation}
  Then Re$(f)\equiv0$.

\end{thm}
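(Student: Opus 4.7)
The plan is to parametrize polynomial solutions of (\ref{be}) by the method of characteristics and then use the two hypotheses on $\text{Re}(f)$ to annihilate every piece of this parametrization. Because $A$ has no holomorphic terms, $A = \ov{z_1}\tilde A$, and the primitive $\Phi(z_1,\ov{z_1}) := \int_0^{\ov{z_1}} \ov A(z_1,t)\,dt$ is a weighted homogeneous polynomial of weighted degree $k$ divisible by $z_1\ov{z_1}$; in particular $\Phi(z_1,0) = \Phi(0,\ov{z_1}) = 0$. Equation (\ref{be}) is the annihilation equation for the vector field $\partial_{\ov{z_1}} + \ov A\,\partial_{\ov{z_2}}$ (in the antiholomorphic variables, with $z_1,z_2$ as parameters), whose only first integral is $\zeta := \ov{z_2} - \Phi$. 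Every polynomial solution therefore has the form $f = F(z_1,z_2,\zeta)$, and weighted homogeneity (with $\mathrm{wt}(\zeta) = k$) gives
$$F(z_1,z_2,\zeta) = \sum_{j=0}^{J} F_j(z_1,z_2)\,\zeta^j, \qquad J = \lfloor m/k\rfloor,$$
with each $F_j$ a holomorphic polynomial of weighted degree $m - jk$.

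Next I would extract $F_0$ from the no-non-trivial-holomorphic-terms hypothesis. Evaluating at $\ov z = 0$ kills $\zeta$, giving $f_h(z) := f(z,0) = F_0(z_1,z_2)$. Evaluating at $z = 0$ gives $\zeta = \ov{z_2}$ (since $\Phi(0,\ov{z_1})=0$), so $f_a(\ov z) := f(0,\ov z) = \sum_j F_j(0,0)\,\ov{z_2}^j$, where $F_j(0,0)$ is nonzero only if $jk = m$. The reality condition $f_h + \ov{f_a} \equiv 0$ then pins $F_0 \equiv 0$ when $k \nmid m$ and $F_0(z_1,z_2) = -\ov{F_{m/k}(0,0)}\,z_2^{m/k}$ when $k \mid m$.

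The main step is to use plurisubharmonicity of $u := \text{Re}(f)$ to show, by downward induction, that $F_j \equiv 0$ for every $j$. The top coefficient has weighted degree $r := m - Jk \in [0,k)$, so only $z_1$ can appear and $F_J = c_J z_1^r$ is a single monomial. Using (\ref{be}) to rewrite $f_{\ov{z_1}} = -\ov A\,f_{\ov{z_2}}$ in the complex Hessian of $u$, the Hermitian form on $v = (1,\ov A)^{\mathrm T}$ collapses to $v^{*}(u_{z_i\ov{z_j}})v = -\text{Re}(\ov{A_{\ov{z_1}}}\,f_{\ov{z_2}})$, so plurisubharmonicity yields the pointwise polynomial bound $\text{Re}(\ov{A_{\ov{z_1}}}\,f_{\ov{z_2}}) \le 0$. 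I then plan to isolate, inside $2\,\text{Re}(f)$, a block of maximal bi-degree in $(z_2,\ov{z_2})$ whose $(z_1,\ov{z_1})$-profile matches the template of Lemma \ref{mainlem-tech}: after expanding $\zeta^J = (\ov{z_2} - \Phi)^J$, the relevant block factors as $\Phi \cdot f_{*}(z_2,\ov{z_2}) + z_1^k g_{*}(z_2,\ov{z_2})$, with $\Phi$ playing the role of $B$ (it satisfies $\Phi(z_1,0) = \Phi(0,\ov{z_1}) = 0$) and the $z_1^k g_{*}$ piece absorbing the $F_0$-contribution when $k\mid m$. Lemma \ref{mainlem-tech} then rigidly forces the real part of this block to be $\alpha|z_1|^k|z_2|^{m-k}$ with $\alpha > 0$, a form incompatible with the explicit expression built from $c_J z_1^r \zeta^J$ unless $c_J = 0$; Lemmas \ref{lem4-tech}, \ref{lem2-tech}, and \ref{lem1-teh} handle the degenerate sub-cases. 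Iterating for $j = J-1, J-2, \dots, 1$ then gives $F_j \equiv 0$ for every $j$, whence $f \equiv 0$ and $\text{Re}(f) \equiv 0$.

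The principal obstacle will be this last step. The expansion $\zeta^j = (\ov{z_2} - \Phi)^j$ couples $F_j$ to every lower $F_i$ through powers of $\Phi$, and in the $k\mid m$ regime the normalization of $F_0$ further couples the antiholomorphic $F_{m/k}$ to the holomorphic $F_0$. Cleanly isolating a single $B f_{*} + z_1^k g_{*}$ block to which Lemma \ref{mainlem-tech} applies, with the remainder quarantined into strictly lower weight and killed via the $\text{Re}(\ov{A_{\ov{z_1}}}f_{\ov{z_2}}) \le 0$ bound, will demand delicate bookkeeping of the weighted bi-grading and of the overlaps among the various $\Phi$-cross-terms in successive $\zeta^j$.
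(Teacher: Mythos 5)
Your parametrization of the kernel of $\partial_{\ov{z_1}}+\ov{A}\,\partial_{\ov{z_2}}$ by the first integral $\zeta=\ov{z_2}-\Phi$, and the observation that the Levi form of $\mathrm{Re}(f)$ evaluated on the CR vector $L=\partial_{z_1}+A\partial_{z_2}$ collapses to $-\mathrm{Re}\big(\ov{A_{\ov{z_1}}}f_{\ov{z_2}}\big)\le 0$, are both correct and attractive. But the target of your downward induction is provably false: you cannot conclude $F_j\equiv 0$ for every $j$, hence not $f\equiv 0$. The theorem only asserts $\mathrm{Re}(f)\equiv 0$, and the paper explicitly warns (Example 6.2) that $f$ itself need not vanish. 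A clean counterexample to your inductive claim: take $k=2$, $A=-\ov{z_1}$ (degree $1$, no holomorphic terms), $\Phi=-z_1\ov{z_1}$, $\zeta=\ov{z_2}+z_1\ov{z_1}$, and
\begin{equation*}
f=i\,(z_2+\ov{z_2}+z_1\ov{z_1})^2=i\,(z_2+\zeta)^2,\qquad m=4>k.
\end{equation*}
Then $f_{\ov{z_1}}+\ov{A}f_{\ov{z_2}}=0$, $\mathrm{Re}(f)\equiv 0$ is plurisubharmonic with no holomorphic terms, yet in your decomposition $F_2=i$, $F_1=2iz_2$, $F_0=iz_2^2$ are all nonzero. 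In particular the base case of your induction, $c_J=0$, already fails; Lemma~\ref{mainlem-tech} cannot be invoked to force it, since that lemma presupposes $\mathrm{Re}(F)\not\equiv 0$, and in this scenario the ``block'' you isolate has vanishing real part.

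The cases your induction cannot reach are exactly the paper's Cases III and IV, where $\mathrm{Re}(f^{[m_0]})=0$ (and possibly $\mathrm{Re}(f^{[m_0-1]})=0$ as well). There the paper abandons the weighted-coefficient bookkeeping entirely and instead shows that the relations among the $B_{hj}$ and $f_{ts}$ force $-F(\ov{A})$ to be a \emph{real} polynomial divisible by $|z_1|^2$, so that $L$ becomes a CR vector field tangent to the real algebraic finite-type hypersurface $\{z_2+\ov{z_2}=F(\ov{A})\}\subset{\mathbb C}^2$, and $f$ restricts to a CR polynomial on it; then Proposition~\ref{disc} (the attached-disc/Hopf-lemma argument, which also invokes Tr\'epreau's extension theorem) kills $\mathrm{Re}(f)$. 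This geometric reduction is the essential extra tool in the harder half of the proof, and nothing in your proposal plays its role. To salvage your approach you would at minimum have to (a) downgrade the goal from $f\equiv 0$ to $\mathrm{Re}(f)\equiv 0$, and (b) supply a replacement for the CR-hypersurface / Hopf-lemma argument in the regime where every $z_2$-slice of $\mathrm{Re}(f)$ already vanishes, where none of Lemmas~\ref{lem1-teh}--\ref{mainlem-tech} produce a contradiction.
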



Without the plurisubharmonicity   on $\hbox{Re}(f)$, the above
theorem can not be true as the following simple example
demonstrates:

\begin{example}
Let  $L=\frac{\p}{\p z_1}-|z_1|^2\frac{\p}{\p z_2}$
and let  $f=z_1\ov{z_2}+\frac{1}{2}|z_1|^4$. Then $\ov{L}(f)\equiv
0$. Notice that $\hbox{Re}(f)$ is not plurisubharmonic neither is
$0$. Notice that $A=\ov{A}=-|z_1|^2, \hbox{Re}(f)$ has no
holomorphic terms. We also mention that in Theorem \ref{main-tech},
we can not conclude $f\equiv 0$ as demenstrated by the following
example: Let $L=\frac{\p}{\p z_1}+kz_1^k\ov{z^{k-1}_1}\frac{\p}{\p
z_2}$ and $f=i(z_2+\ov{z_2}-|z_1|^{2k})^2$. Then $\ov{L}f=0$ and
$\hbox{Re}(f)\equiv 0$. However $f\not\equiv 0$.
\end{example}

 \begin{proof}[Proof of Theorem \ref{main-tech}]
  The proof of Theorem \ref{main-tech} is  long.  We
 will proceed
according to the  four different  scenarios, two of which are
reduced to CR equations along finite type hypersurfaces where
Proposition \ref{disc} can be applied.

For $0\leq j\leq [\frac{m}{k}]:=m_0$, denoted by $f^{[j]}$ the sum
of terms  (monomial terms) in $f$ which has ordinary  degree $j$ in
$z_2$ and $\ov{z_2}$. Then
\begin{equation*}\begin{split}
f=f^{[m_0]}+f^{[m_0-1]}+\cdots+f^{[0]}.
\end{split}\end{equation*}
In the course of the proof, for $j=1,2$, we write $O(|z_j|^k)$ for a
homogeneous polynomial with (the ordinary or un-weighted) degree in
$z_j$ and $\ov{z_j}$  at least $k$.  We also denote by $L(|z_j|^k)$
a homogeneous polynomial with the un-weighted degree in $z_j$ and
$\ov{z_j}$  at most $k$. For a homogeneous polynomial
$P=\sum_{h+j=l} C_{hj}z_1^h\ov{z_1}^j$, we denote the integral of
$P$ along $\ov{z_1}$  as
\begin{equation}
F({P})=\sum_{h+j=l}\frac{1}{j+1}{C_{hj}}z_1^h\ov{z_1}^{j+1}.
\end{equation}

We remark that after a  transformation of the form:
$(z_1,z_2)\rightarrow (z_1,{\delta }^{-1}z_2)$, $A$ and $f$, in the
new coordinates still denoted by $(z_1,z_2)$, takes the form
\begin{equation}\label{afc}
\delta^{-1}A\ \text{and}\ f(z_1,\delta
z_2,\ov{z_1},\ov{\delta}\ov{z_2}).
\end{equation}
We will need this transformation to normalize certain coefficients
in our proof.

\medskip

{\bf Case I:} In this case, we suppose $km_0<m$ or $km_0=m$,
$f^{[m_0]}=0$.

\medskip
Suppose $h$ is the largest integer such that $f^{[h]}\neq0$. From
(\ref{be}), $f^{[h]}$ is holomorphic in $z_1$. We suppose that
\begin{equation}
f^{[h]}=z_1^j\sum\limits_{t+s=h} f_{ts}z_2^t\ov{z_2}^s,\ \
\text{here}\ j+kh=m.
\end{equation}
 We then have $j\geq 1$.
Since Re$f$ contains no holomorphic terms, $f_{h0}=0$. In
particular, we see that we must have $h\ge 1$. In what follows, we
set a term with a negative power to be zero.
\medskip

First, we claim  $f_{ts}= 0$ for any $t\geq 1$. Since Re$f$ is
plurisubharmonic, we obtain
$$
\big(\text{Re}(f)\big)_{z_2\ov{z_2}}=\big(\text{Re}(f^{[h]})\big)_{z_2\ov{z_2}}+L(|z_2|^{h-3})\geq
0.
$$
For $|z_2|\gg |z_1|$, we get
$$
z_1^j\sum\limits_{t+s=h,t,s\geq 1}
tsf_{ts}z_2^{t-1}\ov{z_2}^{s-1}+\ov{z_1}^j\sum\limits_{t+s=h,t,s\geq
1} ts\ov{f_{ts}}z_2^{s-1}\ov{z_2}^{t-1}\geq 0.
$$
Since $j\geq 1$, this is possible only when the left hand side is
identically $0$. This implies that $f_{ts}=0$ for all $t\geq 1$.
Thus
$$
f^{[h]}=f_{0h} z_1^j\ov{z_2}^h\ \text{with}\ j,h\geq 1,\ j+kh=m.
$$
Then
$$
\text{Re}(f^{[h]})=\frac{1}{2}f_{0h}z_1^j\ov{z_2}^h+\frac{1}{2}\ov{f_{0h}}{z_2}^h\ov{z_1}^j.
$$
Since Re$(f)$ is plurisubharmonic, we have
$$
\big(\text{Re}(f)\big)_{z_1\ov{z_1}}\big(\text{Re}(f)\big)_{z_2\ov{z_2}}-
\big(\text{Re}(f)\big)_{z_1\ov{z_2}}\big(\text{Re}(f)\big)_{z_2\ov{z_1}}\geq
0.
$$
Notice that
\begin{equation*}\begin{split}
&\big(\text{Re}(f)\big)_{z_1\ov{z_1}}=O(|z_1|^{j-1}),\ \big(\text{Re}(f)\big)_{z_2\ov{z_2}}=O(|z_1|^{j+1})\\
&\big(\text{Re}(f)\big)_{z_1\ov{z_2}}=\frac{1}{2}f_{0h}jhz_1^{j-1}\ov{z_2}^{h-1}+O(|z_1|^{j}),\\
&\big(\text{Re}(f)\big)_{z_2\ov{z_1}}=\frac{1}{2}\ov{f_{0h}}jhz_2^{h-1}\ov{z_1}^{j-1}+O(|z_1|^{j}).
\end{split}\end{equation*}
Hence
\begin{equation}\begin{split}
&\big(\text{Re}(f)\big)_{z_1\ov{z_1}}\big(\text{Re}(f)\big)_{z_2\ov{z_2}}-
\big(\text{Re}(f)\big)_{z_1\ov{z_2}}\big(\text{Re}(f)\big)_{z_2\ov{z_1}}\\
=&-\frac{1}{4}j^2h^2|f_{0h}|^2|z_1|^{2j-2}|z_2|^{2h-2}+O(|z_1|^{2j-1})\geq
0.
\end{split}\end{equation}
Again, by choosing $|z_2|\gg |z_1|$, we get
$-j^2h^2|f_{0h}|^2|z_1|^{2j-2}|z_2|^{2h-2}\geq 0$. Hence $f_{0h}=0$,
which means that $f^{[h]}\equiv 0$. This contradicts  our assumption
that $f^{[h]}\neq 0$. Hence  { Case I} can only occur when
Re$(f)=0$.

\medskip

{\bf Case II:} \ We now assume that $km_0=m$, Re$(f^{[m_0]})\neq 0$.

\medskip
Suppose
$$
f^{[m_0]}=\sum\limits_{t+s=m_0} f_{ts}z_2^t\ov{z_2}^s.
$$

Since Re$(f^{[m_0]})$ contains no non-trivial holomorphic  terms, we
have $f_{0m_0}=-\ov{f_{m_00}}$. By the plurisubharmonicity  of
Re($f$), we get $($Re$(f^{[m_0]}))_{z_2\ov{z_2}}\geq 0$ and can not
be identically zero. By Lemma \ref{lem2-tech}, $m_0$ is even and
Re$f_{m_1m_1}> 0$. Here $m_0=2m_1$.

After a  rotation transformation of the form $(z_1,z_2)\rightarrow
(z_1,\delta^{-1} z_2)$ for some constant $\delta\not =0$, by
(\ref{afc}), we can make
\begin{equation}\label{tt2}
f_{(m_1-1)(m_1+1)}=c\ov{f_{m_1m_1}}\ \text{for a certain}\ c\geq 0.
\end{equation}
We remark that this transformation does not change our original
hypotheses in this case. Now (\ref{be}) can be solved as
\begin{equation}\label{solution}
f=-F(\ov{A})f_{\ov{z_2}}+\sum_{j=0}^{m}z_1^{jk}h^{[m-j]}(z_2,\ov{z_2}),
\ h^{[m-j]}(z_2,0)=0,\ \hbox{for each } j.
\end{equation} In particular, we get
$$f^{[m_0-1]}=-F(\ov{A})\cdot
f^{[m_0]}_{\ov{z_2}}+z_1^kg^{[m_0-1]}(z_2,\ov{z_2}),\ g(z_2,0)=0.
$$

By the plurisubharmonicty  of Re$f$, we have $
\text{Re}(f)_{z_1\ov{z_1}}\geq 0. $ Notice that $F(\ov{A})$ is
divisible by $|z_1|^2$. Hence
$$
\text{Re}(f)_{z_1\ov{z_1}}=(\text{Re}f^{[m_0-1]})_{z_1\ov{z_1}}+L(|z_2|^{m_0-2})\geq
0.
$$
Hence
$$
(\text{Re}f^{[m_0-1]})_{z_1\ov{z_1}}\geq 0
$$
Notice that the  (ordinary) degree of
$(\text{Re}f^{[m_0-1]})_{z_1\ov{z_1}}$ in $z_2$ and $\ov{z_2}$ is
$m_0-1$ which is an odd number, we  have
$$
(\text{Re}f^{[m_0-1]})_{z_1\ov{z_1}}\equiv 0.\ \text{Thus, it follws
that }\ \text{Re}(F(\ov{A})\cdot f^{[m_0]}_{\ov{z_2}})=0.
$$

Next, write $ \ov{A}=\sum_{j+h=k-1,h\geq
1}\ov{A_{jh}}{z_1^h}\ov{z_1}^j. $ Then
$$
F(\ov{A})=\sum\limits_{j+h=k-1,h\geq
1}\frac{1}{j+1}\ov{A_{jh}}{z_1^h}\ov{z_1}^{j+1}.
$$
Hence
\begin{equation}\begin{split}
\text{Re}(F(\ov{A})\cdot
f^{[m_0]}_{\ov{z_2}})=\text{Re}\Big(\sum\limits_{j+h=k-1,h\geq
1}\frac{1}{j+1}\ov{A_{jh}}{z_1^h}\ov{z_1}^{j+1}\cdot
\sum\limits_{t+s=m_0,s\geq 1}sf_{ts}{z_2^t}\ov{z_2}^{s-1}\Big)=0.
\end{split}\end{equation}
Hence for $h+j=k$, $t+s=m_0-1$, we have
\begin{equation}\begin{split}
\frac{1}{j}\ov{A_{(j-1)h}}\cdot
(s+1)f_{t(s+1)}=-\frac{1}{h}\ov{\ov{A_{(h-1)j}}\cdot
(t+1)f_{s(t+1)}}.
\end{split}\end{equation}
Setting $t=m_1-1$, $s=m_1$ in the above equation and making use of
(\ref{tt2}), we get
\begin{equation}\begin{split}\label{Ajhhj}
\frac{1}{j}\ov{A_{(j-1)h}}\cdot
(m_1+1)c=-\frac{1}{h}\ov{\ov{A_{(h-1)j}}\cdot m_1}.
\end{split}\end{equation}
If $c=0$, then $A_{(h-1)j}=0$ for all $h+j=k,\ h\geq 1, \ j\geq 1$.
This implies that $A\equiv0$, which is impossible. Thus $c\neq 0$.
From (\ref{Ajhhj}), we  get
\begin{equation}\label{alpha}
{A_{(j-1)h}}{A_{(h-1)j}}\leq 0\ \text{and the equality holds only
when}\ A_{(j-1)h}=A_{(h-1)j}=0.
\end{equation}

Next, by (\ref{be}) (\ref{solution}), we compute the following:
$$
f^{[m_0-2]}=F(\ov{A}F(\ov{A}))\cdot f^{[m_0]}_{\ov{z_2}^2}-F(\ov{A}
z_1^k)g^{[m_0-1]}_{\ov{z_2}}+z_1^{2k}g^{[m_0-2]}(z_2,\ov{z_2}).
$$
We will compute the coefficient of  $|z_1|^{2k}|z_2|^{m_0-2}$ in
$f^{[m_0-2]}$. First, the coefficient of $|z_2|^{m_0-2}$ in
$f^{[m_0]}_{\ov{z_2}^2}$ is $ (m_1+1)m_1 f_{(m_1-1)(m_1+1)}. $
Notice that
\begin{equation}\begin{split}
\ov{A}F(\ov{A})=\sum\limits_{j+h=k-1}\ov{A_{jh}}z_1^h\ov{z_1}^j\cdot
\sum\limits_{t+s=k-1}\frac{1}{t+1}\ov{A_{ts}}z_1^s\ov{z_1}^{t+1}
\end{split}\end{equation}
Hence
\begin{equation}\begin{split}
F(\ov{A}F(\ov{A}))=\sum\limits_{j+h=k-1,t+s=k-1}\frac{1}{(t+1)(j+t+2)}\ov{A_{jh}}\ov{A_{ts}}z_1^{h+s}\ov{z_1}^{j+t+2}
\end{split}\end{equation}
When
$
h+s=j+t+2,\ j+h=k-1,\ t+s=k-1,
$
we have
$
j=k-1-h,\ t=h-1,\ s=k-h.
$
Hence the coefficient  in $F(\ov{A}F(\ov{A}))$  with the factor
$|z_1|^{2k}$ is
$$
\sum\limits_{1\leq h\leq
k-1}\frac{1}{hk}\ov{A_{(k-1-h)h}}\ov{A_{(h-1)(k-h)}}:=H.
$$
By (\ref{alpha}), $H\leq 0$, also $H=0$ if and if $A_{(k-1-h)h}=0$
for all $h\geq 1$. This is equivalent to $A=0$, which is impossible.
Thus $H<0$.

Notice that $\ov{A}$ is divisible by $z_1$, thus $F(\ov{A} z_1^k)$
does not contain $|z_1|^{2k}$ term.

Thus the coefficient of  $|z_1|^{2k}|z_2|^{m_0-2}$ in $f^{[m_0-2]}$
is $ (m_1+1)m_1f_{(m_1-1)(m_1+1)}H. $ Recall that Re$f_{m_1m_1}>0$
and $c>0$.  Together with (\ref{tt2}), we get
Re$f_{(m_1-1)(m_1+1)}>0$. Hence the real part of the coefficient of
 $|z_1|^{2k}|z_2|^{m_0-2}$  in $f^{[m_0-2]}$ must be negative.
This contradicts  the following
$$
\Big(\text{Re}(f^{[m_0-2]})\Big)_{z_1\ov{z_1}}\geq 0,
$$
which is true due to the fact that
$\Big(\text{Re}(f^{[m_0]})\Big)_{z_1\ov{z_1}}\ \hbox{and} \
\Big(\text{Re}(f^{[m_0-1]})\Big)_{z_1\ov{z_1}}=0.$
\medskip

{\bf Case III:}  $m=km_0$, $f^{[m_0]}\neq 0$, Re$(f^{[m_0]})=0$ and
Re$(f^{[m_0-1]})\neq 0$.

\medskip

Here, we reduce   $f$ to the solution of a CR vector field of a real
hypersurface of finite type in ${\mathbb C}^2$  and then apply
Proposition \ref{disc} to reach a contradiction. Write
$B:=-F(\ov{A})=\sum_{j+h=k} B_{jh}z_1^j\ov{z_1}^h$. By Lemma
\ref{lem2-tech}, both $k$ and $m_0-1$ are even. Define $k=2k_2$,
$m_0=2m_2+1$. Then $B_{k_2k_2}\neq 0$ by Lemma \ref{lem2-tech},
which implies that $A_{(k_2-1)k_2}\neq 0$. After a dilation
transform of the form as in (\ref{afc}), we  assume that
$A_{(k_2-1)k_2}=-k_2$. Then $B_{k_2k_2}=1$. A direct computation
shows
$$
f^{[m_0-1]}=Bf^{[m_0]}_{\ov{z_2}}+z_1^kg(z_2,\ov{z_2}).
$$
From our assumption, Re$f^{[m_0-1]}$ is plurisubharmonic. By Lemma
\ref{mainlem-tech},
\begin{equation}\begin{split}\label{fm1e}
g(z_2,\ov{z_2})=0,\
\text{Re}(f^{[m_0-1]})=\text{Re}(Bf^{[m_0]}_{\ov{z_2}})=\lambda
|z_1|^k|z_2|^{m_0-1},\ \lambda > 0.
\end{split}\end{equation}
Notice that  $B_{k_2k_2}=1$ and
$(m_2+1)\hbox{Re}(f_{m_2(m_2+1)})=\lambda\neq 0$. Since
$\hbox{Re}(f^{[m_0]})=0$, we have
$f_{(m_2+1)m_2}+\ov{f_{m_2(m_2+1)}}=0$. Notice that
$f^{[m_0]}_{\ov{z_2}}-(m_2+1)f_{m_2(m_2+1)}|z_2|^{2m_2}$  has no
term divisible by $|z_2|^{2m_2}$. Hence we conclude from
(\ref{fm1e})
$$\hbox{Re}(Bf_{m_2(m_2+1)}(m_2+1))|z_2|^{m_0-1}= \lambda
|z_1|^k|z_2|^{m_0-1}.$$
Collecting terms  divisible by $z_2^{m_2+1}\ov{z_2}^{m_2-1}$ in
(\ref{fm1e}), we get
 $$m_2 B {f_{(m_2+1)m_2}}+\ov{B (m_2+2)
f_{(m_2-1)(m_2+2)}}=0.$$ Hence $B$ is different from $\ov{B}$ by a
constant. Since we normalized $B_{k_2k_2}=1$, we see that $B$ is
real-valued. But $f^{[m_0]}_{\ov{z_2}}$ contains a term of the form
$\mu |z_2|^{m_0-1}$ with Re$\mu\neq 0$. Thus $-F(\ov{A})=|z_1|^k$,
namely, $A=-k_2z_1^{k_2-1}\ov{z_1}^{k_2}$.

Now, $L=\frac{\partial}{\partial
z_1}+A(z_1,\ov{z_1})\frac{\partial}{\partial z_2}$ forms a basis for
the sections of CR vector fields  along  the real algebraic finite
type hypersurface $M_0$ in ${\mathbb C}^2$ defined by
$-z_2-\ov{z_2}=|z_1|^{2k_2} $ and $\ov{L}(f)\equiv 0.$ Thus $f$ is a
CR polynomial on $M_0$ and $g=f(z_1,\ov{z_1},z_2,
-z_2-|z_1|^{2k_2})$ is a weighted homogeneous holomorphic polynomial
of degree $m>k$.  Since $f-g\equiv 0$ over $M_0$, $M_0$ is contained
in the zero set of the plurisubharmonic $\rho=\hbox{Re}(f-g)$ with
$0\in M_0$. Notice that $\rho=O(|z|^2)$, we conclude by Proposition
\ref{disc} that $\rho\equiv 0$ or $\hbox{Re}(f)$ is  pluriharmonic.
This is a contradiction.
 Hence Case III cannot occur.

\medskip
{\bf Case IV:}   $m=km_0$, $f^{[m_0]}\neq 0$ but
Re$(f^{[m_0]})=$Re$(f^{[m_0-1]})= 0$.
\medskip

Write
$$
A=\sum_{h+j=k-1} A_{hj}z^h\ov{z}^j,\ B=-F(\ov{A})=\sum_{h+j=k}
B_{hj}z^h\ov{z}^j,\ f^{[m_0]}=\sum_{t+s=m_0} f_{ts}z_2^t\ov{z_2}^s.
$$
Then by our assumption that Re$(f^{[m_0]})=$Re$(f^{[m_0-1]})= 0$ and
$F(\ov{A})$ is divisible by $|z_1|^2$, we obtain as in Case (II) the
following
\begin{equation}\label{tre}
f^{[m_0-1]}=-F(\ov{A})\cdot f^{[m_0]}_{\ov{z_2}},\
f_{ts}=-\ov{f_{st}},\
B_{hj}(s+1)f_{t(s+1)}=-(t+1)\ov{B_{jh}f_{s(t+1)}}.
\end{equation}
Hence for each pair $(h,j)$, if $B_{hj}\neq 0$, then $B_{jh}\neq 0$;
for  otherwise we get $f_{ts}=0$ for any $t+s=m_0$ and reach a
contradiction. Since $B$ is nonzero, we can suppose there is a pair
$(h_0,j_0)$ such that $B_{h_0j_0}\neq 0$ and thus $B_{j_0h_0}\neq
0$. Since $f^{[m_0]}\neq 0$, there is a certain $f_{t_0(s_0+1)}\neq
0$ and thus $f_{s_0(t_0+1)}\neq 0$. By (\ref{tre}), we have
$$
B_{h_0j_0}(s_0+1)f_{t_0(s_0+1)}=-(t_0+1)\ov{B_{j_0h_0}f_{s_0(t_0+1)}},$$
$$B_{j_0h_0}(s_0+1)f_{t_0(s_0+1)}=-(t_0+1)\ov{B_{h_0j_0}f_{s_0(t_0+1)}}.
$$
Since $f_{t_0(s_0+1)}\neq 0$ and $f_{s_0(t_0+1)}\neq 0$, we have
$|B_{h_0j_0}|=|B_{j_0h_0}|$. After  a rotational transformation as
in (\ref{afc}) with a suitable choice of $\delta$, we can assume
that $B_{h_0j_0}=\ov{B_{j_0h_0}}$. Then by (\ref{tre}), we have
\begin{equation}\label{freal}
f_{ts}=-\ov{f_{st}},\ (s+1)f_{t(s+1)}=-(t+1)\ov{f_{s(t+1)}}.
\end{equation}
By (\ref{freal}), $f^{[m_0]}_{{\ov{z_2}}}$ is pure imaginary. Also,
it is not identically zero
for the absolute value of each coefficient is a non-zero multiple of
the others and at least one of them is non-zero. Now, by the first
equation in $(\ref{tre})$, we easily conclude
that $F(\ov{A})$ is a real-valued homogeneous polynomial  divisible
by $|z_1|^2$. Hence, $L=\frac{\partial}{\partial
z_1}+A(z_1,\ov{z_1})\frac{\partial}{\partial z_2}$ forms a basis for
the sections of CR vector fields  along  the real algebraic finite
type  hypersurface $M_0$ in ${\mathbb C}^2$ defined by
$z_2+\ov{z_2}=F(\ov{A})$  and $\ov{L}(f)\equiv 0.$ Now, following
the same argument as in Case (III), we achieve a contradiction
unless Re$(f)\equiv 0$.

\medskip
Combining our arguments in Cases I-IV, we conclude the proof of
Theorem \ref{main-tech}.
\end{proof}

\medskip \medskip

\bibliographystyle{amsalpha}

\noindent Xiaojun Huang, Department of Mathematics, Rutgers
University, New Brunswick, NJ 08903, USA
(huangx$@$math.rutgers.edu);

\noindent Wanke Yin,   School of Mathematics and Statistics, Wuhan University, Wuhan, Hubei 430072,
China (wankeyin@whu.edu.cn).

\end{document}